\documentclass[12pt, reqno]{amsart}  

\usepackage{amsmath, amssymb, amsthm}   
\usepackage{mathrsfs}                  
\usepackage{enumitem}                  
\usepackage{graphicx}                 
\usepackage{hyperref} 
\usepackage{esint}
\hypersetup{colorlinks=true, citecolor=blue, linkcolor=blue, urlcolor=blue}

\newtheorem{theorem}{Theorem}[section]

\newtheorem{definition}{Definition}[section]

\newtheorem{lemma}[definition]{Lemma}
\newtheorem{proposition}[definition]{Proposition}
\newtheorem{corollary}[definition]{Corollary}
\theoremstyle{remark}
\newtheorem{remark}[definition]{Remark}
\numberwithin{equation}{section}

\newcommand{\Abs}[1]{\left\lvert#1\right\rvert}
\newcommand{\norm}[1]{\lVert#1\rVert}
\newcommand{\Norm}[1]{\left\lVert#1\right\rVert}
\newcommand{\R}{\mathbb{R}}
\newcommand{\cL}{\mathcal{L}}
\newcommand{\lap}{\mbox{$\triangle$}}
\newcommand{\flap}{\mbox{$(-\triangle)^{\frac{s}{2}}$}}

\newcommand{\al}{\alpha}
\newcommand{\be}{\begin{equation}}
\newcommand{\ee}{\end{equation}}
\newcommand{\bee}{\begin{equation*}}
\newcommand{\eee}{\end{equation*}}

\begin{document}

\title[]{$L^p$ isomorphisms for the fractional Laplacian on $\mathbb{R}^n$ and their application}

\author[C.~Li]{Congming Li}
\address{School of Mathematical Sciences\\
Shanghai Jiao Tong University\\
Shanghai, 200240, China}  \email{congming.li@sjtu.edu.cn}

\author[Y.~Ouyang]{Yugao Ouyang\textsuperscript{*}}
\thanks{\textsuperscript{*}Corresponding author.}
\address{School of Mathematical Sciences\\
Shanghai Jiao Tong University\\
Shanghai, 200240, China}
\email{ouyang1929@sjtu.edu.cn}

\author[Z.~Wang]{Zixuan Wang}
\address{School of Mathematical Sciences\\
Shanghai Jiao Tong University\\
Shanghai, 200240, China}
\email{math\underline{\ }wzx@sjtu.edu.cn}

\begin{abstract}
A classical theory of Amrouche, Girault and Giroire (1994) resolves
the Laplace equation on $\R^n$ through an isomorphism between weighted
Sobolev spaces. Inspired by their framework, we develop the corresponding
$L^p$ theory for the fractional Laplacian: we introduce weighted fractional
Sobolev spaces $\Lambda^{s,p}(\R^n)$, gauged at top order by
$\|(-\triangle)^{\frac{s}{2}}u\|_{L^p}$, and prove that
$(-\triangle)^{\frac{s}{2}}:\Lambda^{s,p}(\R^n)/\mathcal P_{[s-n/p]}\to
L^p(\R^n)$ is an isomorphism for all $s\in(0,2)$ and $p\in(1,\infty)$; thus
$(-\triangle)^{\frac{s}{2}}u=f$ is solvable for every $f\in L^p(\R^n)$,
uniquely modulo an explicit finite-dimensional space of polynomials. More generally, extending the scale by duality to negative orders, the
fractional Laplacians of the appropriate orders map its spaces
isomorphically onto one another and compose exactly. A key ingredient is a family
of weighted Hardy and Poincar\'e inequalities adapted to
$\|(-\triangle)^{\frac{s}{2}}u\|_{L^p}$, established here for $1\le s<2$ by
a reduction to the gradient. Since these inequalities hold a priori only on
a dense class, while the density of $C_{\mathrm{c}}^\infty(\R^n)$ in
$\Lambda^{s,p}(\R^n)$ is itself nontrivial, we first prove the isomorphism
on the closure of the Schwartz class, which coincides with
$\Lambda^{s,p}(\R^n)$ since the difference of two solutions is an entire
$s$-harmonic function, hence a polynomial; the density follows as a
by-product. As an application, we obtain existence and uniqueness, with
explicit kernels and compatibility conditions, for the fractional Stokes
system in $\R^n$.
\end{abstract}

\subjclass[2020]{Primary 35R11; Secondary 46E35, 76D07, 26D10.}

\keywords{Weighted fractional Sobolev space, Poincar\'e inequality, isomorphism, fractional Stokes system}

\maketitle

\section{Introduction}\label{sec:intro}

The fractional Laplacian $\flap$, $s\in(0,2)$, is the model nonlocal elliptic
operator. As the infinitesimal generator of the rotationally symmetric
$s$-stable L\'evy process \cite{Bertoin96,Valdinoci09}, it is the natural
diffusion operator for processes with long jumps and memory effects,
describing L\'evy flights and anomalous transport \cite{MK2000}. Within
analysis it pervades the study of nonlocal phenomena: phase transitions and
nonlocal minimal surfaces \cite{CRS10}, the obstacle problem
\cite{Silvestre07}, the surface quasi-geostrophic equation \cite{CV10},
peridynamics and conformal geometry, with further applications ranging from
mathematical finance to image processing and mathematical biology; see
\cite{DPV12,BV16} for systematic introductions. For related regularity
results for fractional equations, see \cite{CLWX26, GLO26}. A basic question underlying
many of these applications is the solvability, on the whole space, of the
inhomogeneous equation
\begin{equation}\label{main1}
    \flap u=f\qquad\text{in }\R^n,
\end{equation}
together with a precise description of the behavior of $u$ at infinity:
\begin{center}
\emph{for which data $f\in L^p(\R^n)$, $1<p<\infty$, does \eqref{main1} have a
solution, in which space does it live, and to what extent is it unique?}
\end{center}
On a bounded domain, with a Dirichlet type condition prescribed on its exterior,
this is by now well understood \cite{RS14,Grubb15}; see \cite{Ros16} for a
survey. On $\R^n$ the absence of a boundary forces one to prescribe the
growth or decay of $u$ at infinity, and the nonlocality of $\flap$ makes even
the meaning of \eqref{main1} a matter requiring care, as we discuss next.

For Schwartz functions, $\flap$ admits several equivalent descriptions: the
singular integral
$c_{n,s}\,\mathrm{p.v.}\!\int_{\R^n}(u(x)-u(y))|x-y|^{-n-s}\,dy$, the Fourier
multiplier $|\xi|^s\widehat u(\xi)$, and the Dirichlet-to-Neumann map of the
Caffarelli--Silvestre extension \cite{CS07}. For functions that merely
grow at infinity, however, the pointwise and multiplier forms may fail to
apply. We therefore use the distributional definition,
$\langle\flap u,\phi\rangle=\int u\,\flap\phi$ for $\phi\in C_{\mathrm{c}}^\infty(\R^n)$
(Definition~\ref{def_fracLap}), which is meaningful for every $u$ in the
growth class $\cL_s$ and is the natural notion for the weighted spaces
studied here.

The classical Laplacian is not an isomorphism on the standard Sobolev
spaces over $\R^n$; the right setting, recovering both an isomorphism and a
description of the solution at infinity, is that of \emph{weighted} Sobolev
spaces, in which derivatives are measured against matching powers of
$\rho(x)=(1+|x|^2)^{1/2}$. Two approaches exist. One rests on the weighted
derivative estimates of Nirenberg and Walker \cite{NW73}, yields a Fredholm
theory for elliptic operators on $\R^n$ \cite{McOwen79,LM83}, and was
subsequently extended to noncompact manifolds with prescribed asymptotic
geometry \cite{Lockhart81,LM85,Bartnik86}; this line lies outside our scope.
The approach we follow originates with Kudryavtsev \cite{Ku59} and Hanouzet
\cite{Hanouzet71} and was brought to completion by Amrouche, Girault and
Giroire \cite{AGG94}. Working in the weighted spaces $W_0^{m,p}(\R^n)$,
which carry powers of $\rho$ with logarithmic corrections at the critical
exponents, they established a complete $L^p$ isomorphism theory: for
Laplace's equation the central result is that
\[
  \lap:\;W_0^{2,p}(\R^n)/\mathcal P_{[2-n/p]}\longrightarrow
 L^p(\R^n)
\]
is an isomorphism, with an analogous statement at every order $m$; the same
framework covers the polyharmonic operators, exterior problems \cite{AGG97},
and the stationary Stokes system \cite{AA99}.

Two structural features persist in the fractional setting: solutions are
unique only modulo a finite-dimensional space of polynomials, and logarithmic
weights are required at the critical exponents. For $\flap$, a comparable
whole-space $L^p$ isomorphism theory has been lacking; the aim of this paper
is to establish it for the full range $s\in(0,2)$.

Before stating the main results, we fix the notation. Throughout,
$\rho(x)=(1+|x|^2)^{1/2}$ and $\lg(x)=\ln(2+|x|^2)$; we write
$\mathcal P_k$ for the polynomials of degree $\le k$ (with
$\mathcal P_k=\{0\}$ for $k<0$), $[t]$ for the integer part of $t$, and
$A\lesssim B$ for $A\le CB$ with $C$ independent of the relevant parameters.
The fractional Laplacian is understood in the distributional sense.
\begin{definition}\label{def_fracLap}
    For $u$ in the growth class
    \[
      \cL_s:=\Big\{u\in L^1_{\mathrm{loc}}(\R^n):
      \int_{\R^n}\frac{|u(x)|}{1+|x|^{n+s}}\,dx<\infty\Big\},
    \]
    the fractional Laplacian $\flap u\in\mathscr D'(\R^n)$ is defined by
    \[
      \langle\flap u,\phi\rangle=\int_{\R^n}u\,\flap\phi,
      \qquad\phi\in C_{\mathrm{c}}^\infty(\R^n).
    \]
\end{definition}

We work in the following weighted fractional Sobolev spaces, in which
derivatives are weighted by powers of $\rho$ and, at the critical exponents,
by the logarithmic factor $\lg$, and the top-order term is
$\|\flap u\|_{L^p}$.
\begin{definition}\label{fS}
    Let $s\in(0,2)$, $p\in(1,\infty)$, $m:=[s]$, and set the split (critical)
    index
    \[
      k:=
      \begin{cases}
        s-\dfrac{n}{p}, & s-\dfrac{n}{p}\in\{0,1,\dots,m\},\\[1.0ex]
        -1, & \text{otherwise.}
      \end{cases}
    \]
    Then the weighted fractional Sobolev spaces is 
    \[
    \begin{aligned}
      \Lambda^{s,p}(\R^n):=\Big\{\,u\in L^1_{\mathrm{loc}}(\R^n):\
        & \rho^{\,|\lambda|-s}\,\lg^{-1}\,D^\lambda u\in L^p,\quad 0\le|\lambda|\le k;\\
        & \rho^{\,|\lambda|-s}\,D^\lambda u\in L^p,\quad k+1\le|\lambda|\le m;\\
        & \flap u\in L^p\,\Big\},
    \end{aligned}
    \]
    normed by the sum of the $L^p$-norms of all components, with seminorm
    $|u|_{\Lambda^{s,p}}:=\|\flap u\|_{L^p}$.
\end{definition}
By H\"older's inequality, the weighted conditions above imply $u\in\cL_s$, so
that $\flap u$ is well defined in every case; moreover, $\Lambda^{s,p}(\R^n)$
is a closed subspace of a finite product of weighted $L^p$-spaces, hence a
reflexive Banach space.

Our main results are a pair of isomorphism theorems that resolve
\eqref{main1} across the full range $s\in(0,2)$, in this scale and in its
dual. The following solves $\flap u=f$ for $L^p$ data.
\begin{theorem}\label{main_isom}
    Let $s\in(0,2)$ and $p\in(1,\infty)$. Then
    \[
        \flap:\Lambda^{s,p}(\R^n)/\mathcal P_{[s-n/p]}\to L^p(\R^n)
    \]
    is an isomorphism, and $C_{\mathrm{c}}^\infty(\R^n)$ is dense in $\Lambda^{s,p}(\R^n)$.
\end{theorem}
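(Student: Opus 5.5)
We follow the strategy announced in the abstract and split the argument into four steps: an a priori weighted Hardy--Poincar\'e estimate, surjectivity by an explicit Riesz potential construction, injectivity by a Liouville theorem, and density as a by-product.

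\textbf{Step 1: a priori estimate.} Let $\Lambda_0^{s,p}$ denote the closure of the Schwartz class $\mathcal S(\R^n)$ in the $\Lambda^{s,p}$ norm. We first establish a weighted Poincar\'e inequality on this space:
\[
\inf_{q\in\mathcal P_{[s-n/p]}}\|u+q\|_{\Lambda^{s,p}}\lesssim \|\flap u\|_{L^p}.
\]
For $0<s<1$ we prove it for $u\in\mathcal S$ by writing $u=I_s f$ with $f=\flap u$, modulo polynomials, and applying weighted Hardy estimates for the Riesz potential $I_s$ with the weight $\rho^{-s}$, or $\rho^{-s}\lg^{-1}$ in the critical case $s=n/p$. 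In the supercritical case $s>n/p$ we instead use the normalized kernel $|x-y|^{s-n}-|y|^{s-n}$, which costs a constant. For $1\le s<2$ we reduce to the gradient: $\nabla u=\nabla I_s f$ is a Riesz transform composed with $I_{s-1}f$, and Riesz transforms are bounded on $L^p$. So the weighted bound for $\nabla u$ follows from the first-order case, and the zeroth-order term follows from a classical weighted Hardy inequality (as in \cite{AGG94}) applied to $u$ modulo $\mathcal P_{[s-n/p]}$. Density then extends the inequality to all of $\Lambda_0^{s,p}$.

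\textbf{Step 2: surjectivity onto $L^p$.} For $f\in\mathcal S$, or in a dense subclass with enough vanishing moments so that $I_s f$ lies in $\Lambda_0^{s,p}$, we set $u=I_s f$ suitably renormalized. Then $\flap u=f$ in the sense of Definition~\ref{def_fracLap}. Step 1 makes $\flap:\Lambda_0^{s,p}/\mathcal P_{[s-n/p]}\to L^p$ injective with closed range. Since the range contains a dense subset of $L^p$, the map is onto. A small check is needed here: that $\mathcal S$ functions (or $I_s$ of moment-free data) do belong to $\Lambda^{s,p}$, and that $\mathcal P_{[s-n/p]}\subset\Lambda^{s,p}$ is exactly the polynomial kernel. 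The latter is verified by integrating $\rho^{(|\lambda|-s)p}|x|^{(j-|\lambda|)p}$ near infinity: the integral converges precisely when $j<s-n/p$, and in the borderline case $j=s-n/p$ the $\lg^{-1}$ factor makes it converge.

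\textbf{Step 3: identification $\Lambda_0^{s,p}=\Lambda^{s,p}$.} Take $u\in\Lambda^{s,p}$ and put $f=\flap u$. Step 2 gives $v\in\Lambda_0^{s,p}$ with $\flap v=f$. The difference $w=u-v\in\Lambda^{s,p}\subset\cL_s$ satisfies $\flap w=0$ distributionally. We then invoke the Liouville theorem for $s$-harmonic functions in $\cL_s$ (Fall, or Chen--D'Ambrosio--Li): such $w$ is a polynomial, of degree at most $1$ when $s>1$ and constant when $s\le 1$. The weighted integrability then forces $w\in\mathcal P_{[s-n/p]}$, so $u\in\Lambda_0^{s,p}$. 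This gives the density of $\mathcal S$ and the isomorphism on all of $\Lambda^{s,p}$. Finally, $C_c^\infty$ is dense in $\mathcal S$ for the $\Lambda^{s,p}$ norm by a cutoff argument: with $u_R=\chi(\cdot/R)u$, the commutator $\flap(\chi_R u)-\chi_R\flap u$ tends to $0$ in $L^p$ because $u$ decays rapidly.

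\textbf{Main obstacle.} The hard part is Step 1 in the critical cases, where the logarithmic weights enter, and for $1\le s<2$, where the choice of polynomial correction must make the Hardy inequality hold. We would handle this by subtracting the Taylor-type moment correction of the kernel and estimating the remainder with the Hardy inequality in dyadic annuli.
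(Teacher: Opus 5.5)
\textbf{Overall assessment.} Your architecture matches the paper's, and your Step 1 takes a genuinely different and workable route. However, Step 3 has a real gap that leaves the density of $C_c^\infty(\R^n)$ unproved.

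\textbf{Comparison with the paper.} Your $\Lambda_0^{s,p}$ is the paper's $Y$. A Poincar\'e inequality gives closed range, a dense class gives dense range, and Fall's Liouville theorem identifies $Y=\Lambda^{s,p}$. The real difference is how you obtain the Poincar\'e inequality.

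The paper has Hardy inequalities only on $C_c^\infty(\R^n\setminus\{0\})$ or $C_c^\infty(B_R^c)$. For $s<1$ it takes Frank--Seiringer and Nguyen--Squassina and converts them to $\|\flap u\|_{L^p}$. For $1\le s<2$ it uses annular Poincar\'e, the order-$(s-1)$ inequality for $Du$, and Calder\'on--Zygmund. It must then transfer these to $Y$:
\begin{itemize}
\item in the noncritical case, by density of $C_c^\infty(\R^n\setminus\{0\})$ in the functions with vanishing jet at $0$;
\item in the critical case, by a compactness--contradiction argument using a partition of unity and Lemma~\ref{remainder_estimate}.
\end{itemize}

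Your identity $u=I_s\flap u$ is exact on $\mathscr S$, because $\flap u=O(\rho^{-n-s})$. With the kernel renormalized by $|y|^{s-n}$ (supercritical) or $|y|^{s-n}\chi_{\{|y|>1\}}$ (critical), it gives the inequality for every Schwartz function directly. Both transfer arguments then disappear, and extending a quotient-norm inequality to the closure is trivial.

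The price is that you must prove the weighted bounds for these modified potentials yourself. They do hold: in the critical case the delicate piece is $\sum_{j\le k}\|f\|_{L^p(A_j)}$ on the annulus $A_k$, which the discrete Hardy inequality controls against the weight $k^{-p}$. At present, though, they are only announced.

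In Step 2, state explicitly that the preimages of your dense class must lie in $\Lambda_0^{s,p}$, not merely in $\Lambda^{s,p}$. The Lizorkin class $\mathscr S_0$ gives this at once, since $\flap$ is a bijection of $\mathscr S_0$ and $\mathscr S_0$ is dense in $L^p$.

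\textbf{The gap in Step 3.} From $u-v\in\mathcal P_{[s-n/p]}$ you conclude $u\in\Lambda_0^{s,p}$. This requires $\mathcal P_{[s-n/p]}\subset\Lambda_0^{s,p}$, but you only checked $\mathcal P_{[s-n/p]}\subset\Lambda^{s,p}$ by integrability. The quotient $\Lambda_0^{s,p}/\mathcal P_{[s-n/p]}$ in Step 2 tacitly assumes the same inclusion.

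Without it you obtain only $\Lambda^{s,p}=\Lambda_0^{s,p}+\mathcal P_{[s-n/p]}$. That still yields the isomorphism: surjectivity comes from $\Lambda_0^{s,p}$, injectivity from Liouville, and the bounded inverse from the open mapping theorem. It does not yield the density of $C_c^\infty(\R^n)$, which is equivalent to the missing inclusion.

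The inclusion is not a routine cutoff in the critical cases. By scaling,
\[
\|\flap\eta_R\|_{L^p}=R^{n/p-s}\|\flap\eta\|_{L^p},\qquad
\|\flap(\eta_Rx_j)\|_{L^p}=R^{1-s+n/p}\|\flap(\eta x_j)\|_{L^p}.
\]
These do not tend to $0$ when $sp=n$, respectively $s-n/p=1$, so $\eta_Rq\not\to q$ in norm.

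The paper's Lemma~\ref{poly_in_Y} resolves this in three steps:
\begin{itemize}
\item $\{\eta_Rq\}_{R>1}$ is bounded in the reflexive space $\Lambda^{s,p}$;
\item any weak limit equals $q$, by testing against $\varphi\in C_c^\infty$, since $u\mapsto\int u\varphi$ is continuous on $\Lambda^{s,p}$;
\item Mazur's theorem gives convex combinations of the $\eta_Rq$ converging strongly to $q$.
\end{itemize}
You need to add this step (or an equivalent one) to obtain the density statement of the theorem.
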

\begin{remark}\label{rem_solvability}
    Thus \eqref{main1} is solvable for every $f\in L^p$ by a function
    $u\in\Lambda^{s,p}$, unique modulo the finite-dimensional space
    $\mathcal P_{[s-n/p]}$; when $sp<n$ this space is trivial and the
    solution is unique.
\end{remark}

A duality argument transposes this to the negative-order spaces
$\Lambda^{-s,p}:=(\Lambda^{s,p'})'$: writing $T\perp\mathcal P_k$ when $T$
annihilates $\mathcal P_k$, the operator
\[
  \flap:L^p(\R^n)\longrightarrow
  \Lambda^{-s,p}(\R^n)\perp\mathcal P_{[s-n/p']}
\]
is again an isomorphism (Theorem~\ref{dual_isom}). More generally, these
two theorems are members of a family of composition isomorphisms along the
scale $\{\Lambda^{\sigma,p}(\R^n)\}_{-2<\sigma<2}$, under the convention
$\Lambda^{0,p}:=L^p(\R^n)$: for $0<t\le s$ the operator
$(-\triangle)^{\frac{t}{2}}$, defined on $\mathscr S(\R^n)$ and extended by
continuity, is an isomorphism
\begin{equation}\label{intro_scale_isom}
  (-\triangle)^{\frac{t}{2}}:\
  \Lambda^{s,p}(\R^n)/\mathcal P_{[s-n/p]}
  \longrightarrow
  \Lambda^{s-t,p}(\R^n)/\mathcal P_{[s-t-n/p]},
\end{equation}
with the exact factorization
$(-\triangle)^{\frac{s-t}{2}}\circ(-\triangle)^{\frac{t}{2}}=\flap$ on
$\Lambda^{s,p}(\R^n)$; analogous isomorphisms cross the zeroth order and
continue on the negative side, the polynomial quotients being replaced by
orthogonality constraints, and in general neither can be removed. Since
$\Lambda^{1,p}(\R^n)$ coincides with the weighted Sobolev space
$W_0^{1,p}(\R^n)$ of \cite{AGG94}, the fractional scale contains the
classical first-order space. Precise statements are given in
Section~\ref{sec:iso}; these composition isomorphisms also underlie the
second, $L^p$-pressure setting of the fractional Stokes theorem below.

We now describe the ideas behind the proofs. The first is to gauge
regularity by $\flap$ itself. Since the equation is $\flap u=f$, we build
$\Lambda^{s,p}$ around the seminorm $\|\flap u\|_{L^p}$ rather than the
Gagliardo seminorm $[u]_{s,p}$: with this choice, $L^p$ is the exact image
of $\flap$ and the a priori estimate for $u$ is precisely an estimate of the
solution in terms of the data. The price is that the weighted Hardy and
Poincar\'e inequalities supporting the estimate must be proved for
$\|\flap u\|_{L^p}$ directly; for $0<s<1$ this is achieved by a transfer
from the Gagliardo-seminorm inequalities of Frank--Seiringer \cite{FS08}
and, at the critical exponent, of Nguyen--Squassina \cite{NM18}.

The second idea supplies the missing inequalities for $1\le s<2$ by
inserting the gradient as an intermediate step: a weighted Poincar\'e
inequality, proved through a dyadic decomposition into annuli and covering
the critical exponents with logarithmic weights, bounds $u$ by $Du$; the
case of order $s-1$, already available, then applies to $Du$, and
Calder\'on--Zygmund estimates return $\|\flap u\|_{L^p}$
(Section~\ref{sec:Poincare}).

The third idea is to obtain the density of $C_{\mathrm{c}}^\infty(\R^n)$ in
$\Lambda^{s,p}(\R^n)$ as a consequence of the isomorphism rather than as an
input. Indeed, the inequalities above hold a priori only on a dense class,
while the density of $C_{\mathrm{c}}^\infty(\R^n)$, settled only recently even for the
unweighted spaces of \cite{SS15,CS19} by \cite{BCCS22,KS22}, is unknown in
our weighted setting. We break this circularity by carrying out the whole
scheme on the auxiliary space $Y:=\overline{\mathscr S(\R^n)}$ (closure in
$\Lambda^{s,p}$): we prove the isomorphism
$\flap:Y/\mathcal P_{[s-n/p]}\to L^p$ there and then identify
$Y=\Lambda^{s,p}$, since by the surjectivity on $Y$ any $u\in\Lambda^{s,p}$
differs from an element of $Y$ by an entire $s$-harmonic function, hence by
a polynomial \cite{Fall16}, and polynomials of the admissible degrees lie in
$Y$; the density follows as a by-product (Section~\ref{sec:iso}).

As a concrete application we solve the fractional Stokes system
\begin{equation}\label{intro_stokes}
  \flap u+\nabla P=f,\qquad \mathrm{div}\,u=g\qquad\text{in }\R^n\ (n\ge2),
\end{equation}
a model of steady viscous flow with nonlocal dissipation, whose classical
counterpart ($s=2$) was treated on $\R^n$ in weighted Sobolev spaces by
Alliot and Amrouche \cite{AA99}. For the fractional system itself, the only
existence result we are aware of is due to Cobb \cite{Cobb23}, where
\eqref{intro_stokes} arises as the viscous component of a sedimentation
model: in the divergence-free case $g=0$, for data $f\in L^p(\R^n)^n$ with
$sp<n$, and at the endpoint $sp=n$ for $f\in\dot B^0_{p,1}(\R^n)^n$, a
solution is constructed through the explicit representation
$u=(-\triangle)^{-\frac{s}{2}}\,\mathbb{P}f$, with $\mathbb{P}$ the Leray
projection, and shown to be unique within the class $\mathscr{S}'_0$ of
tempered distributions with vanishing low-frequency part. The polynomial
ambiguity is thus excluded by a normalization rather than characterized, and
the supercritical range $sp>n$ is not treated there. Combining the
isomorphism of Theorem~\ref{main_isom} with the weighted Laplace theory of
\cite{AGG94} for the pressure in the first setting, and with the
composition isomorphisms \eqref{intro_scale_isom} and their duals in the
second, we obtain existence and uniqueness for the full range $s\in(0,2)$
and $p\in(1,\infty)$, with inhomogeneous divergence data and an explicit
description of the kernels, in two functional settings according to the
regularity of the data. Set
\[
    V_{s,p}:=\Lambda^{s-1,p}(\R^n)\perp\mathcal P_{[1-s-n/p']},
\]
so that $V_{s,p}=\Lambda^{s-1,p}(\R^n)$ when $s\ge1$.

\begin{theorem}\label{intro_stokes_thm}
Let $n\ge2$, $s\in(0,2)$, and $p\in(1,\infty)$.
\begin{enumerate}
  \item[\textup{(i)}] Suppose $f\in L^p(\R^n)^n$ and $g\in V_{s,p}$.
        Then \eqref{intro_stokes} has a solution
        $(u,P)\in\Lambda^{s,p}(\R^n)^n\times W_0^{1,p}(\R^n)$, and the
        solutions in this class are exactly the pairs $(u+\lambda,P+c)$
        with $(\lambda,c)$ in
        \[
          \mathcal N_{s,p}
          :=\big\{(\lambda,c)\in(\mathcal P_{[s-n/p]})^n\times
          \mathcal P_{[1-n/p]}:\ \mathrm{div}\,\lambda=0\big\}.
        \]
        Moreover every solution satisfies
        \[
          \inf_{(\lambda,c)\in\mathcal N_{s,p}}
          \big(\|u-\lambda\|_{\Lambda^{s,p}}+\|P-c\|_{W_0^{1,p}}\big)
          \le C\big(\|f\|_{L^p}+\|g\|_{L^p}+\|\flap g\|_{W_0^{-1,p}}\big).
        \]
  \item[\textup{(ii)}] Suppose $f\in\Lambda^{-1,p}(\R^n)^n$ and
        $g\in\Lambda^{s-2,p}(\R^n)$. Then
        \eqref{intro_stokes}---its two equations holding in
        $\Lambda^{-1,p}(\R^n)^n$ and $\Lambda^{s-2,p}(\R^n)$ respectively,
        with the operators extended along the scale as in
        Section~\ref{sec:appstokes}---has a solution
        $(u,P)\in V_{s,p}^n\times L^p(\R^n)$ if and only if
        \[
          f\perp\mathcal P_{[1-n/p']}\quad\text{and}\quad
          g\perp\mathcal P_{[2-s-n/p']}.
        \]
        In that case the pressure $P$ is unique and the solutions are
        exactly the pairs $(u+\lambda,P)$ with
        $\lambda\in(\mathcal P_{[s-1-n/p]})^n$; thus the solution is
        unique whenever $s-1<n/p$, in particular for all $0<s\le1$, and
        unique up to an additive constant vector when $s-1\ge n/p$.
        Moreover every solution satisfies
        \[
          \|P\|_{L^p}
          +\inf_{\lambda\in(\mathcal P_{[s-1-n/p]})^n}
           \|u-\lambda\|_{\Lambda^{s-1,p}}
          \le C\big(\|f\|_{\Lambda^{-1,p}}+\|g\|_{\Lambda^{s-2,p}}\big).
        \]
\end{enumerate}
\end{theorem}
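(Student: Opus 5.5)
We prove the Stokes theorem by decoupling the pressure from the velocity through the divergence, and we treat the two settings separately.

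\emph{Setting (i).} Apply $\mathrm{div}$ to the first equation and use that $\flap$ commutes with derivatives on the relevant class. Formally this gives
\[
  \lap P=\mathrm{div}\,f-\flap g .
\]
Since $f\in L^p$, we have $\mathrm{div}\,f\in W_0^{-1,p}$. The hypothesis $g\in V_{s,p}$, combined with the dual isomorphism (Theorem~\ref{dual_isom}) and the scale isomorphisms \eqref{intro_scale_isom}, gives $\flap g\in W_0^{-1,p}$ with the orthogonality needed by \cite{AGG94}. This is where $g\perp\mathcal P_{[1-s-n/p']}$ enters when $s<1$; for $s\ge1$ the term $\|\flap g\|_{W_0^{-1,p}}$ is simply kept on the right side of the estimate.

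The isomorphism $\lap:W_0^{1,p}/\mathcal P_{[1-n/p]}\to W_0^{-1,p}\perp\mathcal P_{[1-n/p']}$ of \cite{AGG94} then yields $P\in W_0^{1,p}$, unique modulo $\mathcal P_{[1-n/p]}$. We check that $\mathrm{div}f$ annihilates constants; this holds because $\mathcal P_{[1-n/p']}$ has degree $\le0$ and integration by parts kills constants.

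Next, Theorem~\ref{main_isom} with right-hand side $f-\nabla P\in L^p$ gives $u\in\Lambda^{s,p}$ with $\flap u=f-\nabla P$, unique modulo $\mathcal P_{[s-n/p]}$. It remains to recover $\mathrm{div}\,u=g$. The function $w:=\mathrm{div}\,u-g$ satisfies $\flap w=\mathrm{div}(f-\nabla P)-\flap g=0$ and lies in a weighted space of growth class $\cL_s$. By the Liouville theorem \cite{Fall16}, $w$ is a polynomial of degree at most $[s-n/p]-1$. We choose the velocity polynomial $\lambda\in(\mathcal P_{[s-n/p]})^n$ with $\mathrm{div}\lambda=w$, which is possible since the divergence maps $\mathcal P_k^n$ onto $\mathcal P_{k-1}$, and replace $u$ by $u-\lambda$.

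For the kernel: if $f=0$ and $g=0$, then $\lap P=0$, so $P\in\mathcal P_{[1-n/p]}$ and $\nabla P$ is constant. Then $\flap u=-\nabla P$ with $u\in\Lambda^{s,p}$ forces $\nabla P=0$, because $\flap$ of an admissible polynomial vanishes and Theorem~\ref{main_isom} is injective on the quotient. Hence $u$ is a divergence-free polynomial, which is exactly $\mathcal N_{s,p}$. The estimate follows by chaining the two isomorphism bounds.

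\emph{Setting (ii).} Here we work one order lower along the scale. Take the divergence of the first equation and use $\mathrm{div}\,u=g$:
\[
  \lap P=\mathrm{div} f-\flap g \quad\text{in }\Lambda^{-2,p}.
\]
The dual form of \eqref{intro_scale_isom} (order $0\to-2$ for $\lap$, i.e.\ $(-\lap)^{1}$ viewed as $(-\triangle)^{t/2}$ with $t=2$ composed across zero) gives a unique $P\in L^p$ precisely when the right-hand side is orthogonal to $\mathcal P_{[2-n/p']}$. Testing against polynomials $q$ shows that this condition is equivalent to two requirements: $f\perp\mathcal P_{[1-n/p']}$ (pairing $\langle\mathrm{div}f,q\rangle=-\langle f,\nabla q\rangle$), and $g\perp\mathcal P_{[2-s-n/p']}$ (since $\flap q$ lowers degree in the dual pairing through the scale factorization).

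Then $\flap u=f-\nabla P\in\Lambda^{-1,p}$, and the orthogonality $f-\nabla P\perp\mathcal P_{[1-n/p']}$ holds because $\nabla P$ annihilates constants. The crossing isomorphism $\flap:\Lambda^{s-1,p}\perp(\cdot)\to\Lambda^{-1,p}\perp\mathcal P_{[1-n/p']}$ produces $u\in V_{s,p}^n$, unique modulo $(\mathcal P_{[s-1-n/p]})^n$. The divergence equation is then recovered by the same Liouville argument: the defect lies in the kernel of $\flap$ on $\Lambda^{s-2,p}$, which is trivial there.

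\emph{Main obstacle.} The hard part is justifying the commutation $\mathrm{div}\circ\flap=\flap\circ\mathrm{div}$ and the pairing identities on the extended, negative-order spaces. We plan to establish them on $\mathscr S$ and extend by the density statement of Theorem~\ref{main_isom} and the exact factorization in \eqref{intro_scale_isom}. A second delicate point is the bookkeeping that matches the polynomial degrees $[s-n/p]$, $[1-n/p]$ and $[2-s-n/p']$ exactly, including the critical cases with logarithmic weights.
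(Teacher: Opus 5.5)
\textbf{Part (ii) has a genuine gap.} You claim that orthogonality of $\mathrm{div}\,f-\flap g$ to $\mathcal P_{[2-n/p']}$ is equivalent to the two stated conditions, and this is false. Since $[2-n/p']\le1$, every such $q$ is at most affine. Hence $\flap q=0$ (modulo the admissible polynomials), and $\langle\flap g,q\rangle=\langle g,\flap q\rangle$ vanishes for every $g$. That orthogonality therefore encodes only $f\perp\mathcal P_{[1-n/p']}$.

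The condition $g\perp\mathcal P_{[2-s-n/p']}$ plays a different role: it is what makes the pairing $\langle g,\flap\phi\rangle$ well defined at all, since $\flap\phi$ is determined only modulo $\mathcal P_{[2-s-n/p']}$. So your necessity argument for the $g$-condition is missing. It should come directly from $g=\mathrm{div}\,u$ with $u\in V_{s,p}$, e.g.\ $\langle\mathrm{div}\,u,q\rangle=-\langle u,\nabla q\rangle=0$.

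Moreover, your route runs through $\Lambda^{-2,p}$, which the paper never constructs. You would need:
\begin{itemize}
\item $\lap:L^p\to\Lambda^{-2,p}\perp\mathcal P_{[2-n/p']}$ as an isomorphism;
\item injectivity of $\flap$ from $\Lambda^{s-2,p}\perp\mathcal P_{[2-s-n/p']}$ into $\Lambda^{-2,p}$ (the kernel is trivial only on this orthogonal subspace, not on all of $\Lambda^{s-2,p}$, so you must also check that $\mathrm{div}\,u$ lies in it);
\item the commutation $\mathrm{div}\circ\flap=\flap\circ\mathrm{div}$ at negative order.
\end{itemize}
None of this is available in the scale $\{\Lambda^{\sigma,p}\}_{-2<\sigma<2}$. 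You would have to build an order-$2$ analogue of Proposition~\ref{scale_isom} from the $W_0^{2,p}$ theory of \cite{AGG94}.

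The paper avoids orders $\pm2$ entirely by using Riesz transforms:
\begin{itemize}
\item Solve $(-\triangle)^{1/2}w=f$ and $(-\triangle)^{(2-s)/2}Q=g$ in $L^p$ by Theorem~\ref{dual_isom}. This is exactly where the two orthogonality conditions are used.
\item Set $P:=Q-\sum_iR_iw_i$ and $v:=w-RP$. Then $\sum_jR_jv_j=Q$, because $\sum_jR_j^2=-\mathrm{Id}$.
\item Let $u$ be the element of $V_{s,p}^n$ with realization $v$.
\end{itemize}
The identities \eqref{stokes_identities} then give existence, necessity and uniqueness using only orders $1$ and $2-s$.

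\textbf{Part (i)} follows the paper's route: pressure from the \cite{AGG94} isomorphism, velocity from Theorem~\ref{main_isom}, and the polynomial defect absorbed by a linear field. Two steps need repair.

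First, the Liouville step. For $0<s<1$ your defect $w=\mathrm{div}\,u-g$ is not a function in $\cL_s$: both terms lie in the negative-order space $\Lambda^{s-1,p}$, so Fall's theorem does not apply. Even for $s\ge1$, Liouville only gives ``affine'', not degree $\le[s-n/p]-1$. The paper instead shows $\langle w,\flap\psi\rangle=0$ for all $\psi\in\mathscr S_0$, so $w\in\mathscr S_0^\perp=\mathcal P$. It then bounds the degree by testing against $\psi(\cdot/R)$ and letting $R\to\infty$.

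Second, the final estimate. Chaining the isomorphism bounds controls $\inf\|u-q\|_{\Lambda^{s,p}}$ over all $q\in(\mathcal P_{[s-n/p]})^n$. The estimate, however, is an infimum over $\mathcal N_{s,p}$, i.e.\ over divergence-free $q$. When $[s-n/p]=1$ you must remove the trace part of the minimizer $q=a+Bx$. You then bound $|\operatorname{tr}B|$ by testing $\mathrm{div}(u-q)=g-\operatorname{tr}B$ against a fixed bump, as in the proof of Theorem~\ref{stokes_main}.
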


\noindent Part \textup{(i)} is proved as Theorem~\ref{stokes_main} and part
\textup{(ii)} as Theorem~\ref{stokes_Lp_pressure}. The space
$\mathcal N_{s,p}$ is the intersection
$N_s^{\mathrm{St}}\cap\big[\Lambda^{s,p}(\R^n)^n\times W_0^{1,p}(\R^n)\big]$,
where
\[
  N_s^{\mathrm{St}}
  =\big\{(\lambda,c)\in(\mathcal H_s)^n\times\mathcal P_0:\
  \mathrm{div}\,\lambda=0\big\}
\]
is the kernel of the Stokes operator on $(\cL_s)^n\times\mathscr S'(\R^n)$
and $\mathcal H_s:=\{w\in\cL_s:\flap w=0\}$ equals $\mathcal P_0$ for
$0<s\le1$ and $\mathcal P_1$ for $1<s<2$ (Proposition~\ref{kernel}). In each
case the kernel and the orthogonality conditions are explicit, in the spirit
of the local theory of \cite{AGG94,AA99}.

The paper is organized as follows. Section~\ref{sec:Poincare} establishes
the weighted Poincar\'e type inequalities in the subcritical, supercritical,
and critical regimes. Section~\ref{sec:iso} introduces $Y$, transfers these
inequalities to it, proves $\flap:Y/\mathcal P_{[s-n/p]}\to L^p$ is an
isomorphism, and deduces $Y=\Lambda^{s,p}$, from which the density of
$C_{\mathrm{c}}^\infty$ and the full isomorphism follow; it also contains the dual
isomorphism and the mapping properties along the scale.
Section~\ref{sec:appstokes} treats the fractional Stokes system
\eqref{intro_stokes}, first for $L^p$ data with pressure in $W_0^{1,p}$, then
in the $L^p$-pressure setting.

\section{Poincar\'e inequalities}\label{sec:Poincare}

A fundamental observation in \cite{AGG94} is that weighted Poincar\'e inequalities relate the norms of functions to those of their derivatives. In this section, we collect and establish the fractional counterparts.

If $sp<n$, for Schwartz function $u\in {\mathscr S}(\R^n)$, the Riesz potential $I_s(-\triangle)^{s/2} u = u $. Then the classical result of Stein and Weiss\cite{SW58} implies
\begin{equation}\label{sw}
    \int _{\R^n} \frac{|u(x)|^p}{|x|^{sp}}dx\le C\int _{\R^n}|\flap u(x)|^pdx.
\end{equation}

\subsection{The case $0<s<1$}

By \eqref{sw}, in the subcritical range $sp<n$ the Hardy inequality is already
expressed through the seminorm $\norm{\flap u}_{L^p}$. In this subsection we
treat the remaining ranges $sp>n$ and $sp=n$. The known inequalities in these
ranges are phrased in terms of the Gagliardo seminorm
\[
  [u]_{s,p}:=\Big(\int_{\R^n}\int_{\R^n}
  \frac{|u(x)-u(y)|^p}{|x-y|^{n+sp}}\,dx\,dy\Big)^{1/p};
\]
we first record them, and then convert both, in a single statement
(Corollary~\ref{Poincare_0<s<1}), into inequalities governed by
$\norm{\flap u}_{L^p}$, the form used in the rest of the paper.

In the supercritical range, the following Hardy inequality is due to Frank and
Seiringer.

\begin{theorem}[Frank--Seiringer \cite{FS08}]\label{FS}
    Let $n\ge1$, $s\in(0,1)$, and $sp>n$. Then
    \begin{equation}\label{FS_ineq}
        \Norm{\frac{u}{|\cdot|^{s}}}_{L^p(\R^n)}
        \le C_{n,s,p}\,[u]_{s,p},
        \qquad\forall\,u\in C_{\mathrm{c}}^\infty(\R^n\setminus\{0\}).
    \end{equation}
\end{theorem}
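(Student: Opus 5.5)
Since the paper cites this from \cite{FS08}, a self-contained proof would follow the pointwise route available in the supercritical regime $sp>n$, where $W^{s,p}$ embeds into Hölder spaces. The central step is a Morrey-type estimate for the Gagliardo seminorm on a ball. For $x\in\R^n\setminus\{0\}$, set $r=|x|$ and $B=B(x,2r)$, so that $0\in B$. The standard Campanato/Morrey argument for $W^{s,p}$ with $sp>n$, which compares $u$ with its averages on dyadic sub-balls and sums a geometric series, gives
\[
|u(x)-u(0)|^p\le C\,r^{sp-n}\int_{B}\int_{B}\frac{|u(y)-u(z)|^p}{|y-z|^{n+sp}}\,dy\,dz .
\]
Because $u\in C_{\mathrm c}^\infty(\R^n\setminus\{0\})$ vanishes near the origin, $u(0)=0$. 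Hence
\[
\frac{|u(x)|^p}{|x|^{sp}}\le C\,|x|^{-n}\iint_{B(x,2|x|)^2}\frac{|u(y)-u(z)|^p}{|y-z|^{n+sp}}\,dy\,dz .
\]

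Integrating this bound over $x$ directly would give $\int |x|^{-n}\,dx=\infty$, so the double integral has to be localized. I would do this with a dyadic decomposition. Split $\R^n\setminus\{0\}$ into annuli $A_j=\{2^j\le|x|<2^{j+1}\}$. For $x\in A_j$, replace the Morrey estimate by one comparing $u(x)$ with $u(0)=0$ through a chain of balls $B_i=B(0,2^{i})$ with $i\le j+2$:
\[
|u(x)|\le |u(x)-u_{B_{j+2}}|+\sum_{i\le j+1}|u_{B_{i+1}}-u_{B_i}|,
\]
where $u_{B_i}\to u(0)=0$ as $i\to-\infty$. Each difference is bounded by
\[
|u_{B_{i+1}}-u_{B_i}|\lesssim 2^{i(sp-n)/p}\Big(\iint_{B_{i+1}^2}\frac{|u(y)-u(z)|^p}{|y-z|^{n+sp}}\Big)^{1/p}.
\]
Multiplying by the weight $2^{-js}$ and writing $E_i$ for the double integral over $B_{i+1}^2$, this yields
\[
|x|^{-s}|u(x)|\lesssim \sum_{i\le j+2}2^{(i-j)s}\,2^{-in/p}E_i^{1/p}.
\]
This is a discrete convolution with the summable kernel $2^{-(j-i)s}$, and $|A_j|\sim 2^{jn}$.

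The main obstacle is the final summation, because the balls $B_i$ are nested and not disjoint, so summing $E_i$ over $i$ overcounts the seminorm. I would fix this by using annular differences instead of nested balls. Compare the averages of $u$ over consecutive annuli $A_i$ and $A_{i+1}$, and bound $|u_{A_{i+1}}-u_{A_i}|^p$ by $2^{i(sp-n)}$ times the Gagliardo energy on $(A_i\cup A_{i+1})^2$; these sets have bounded overlap. The oscillation of $u$ inside $A_j$ is handled by Morrey's estimate on the annulus, which is a uniform Lipschitz domain after scaling. The relevant scaling exponent is $sp-n>0$, which makes the oscillation terms decay. Discrete Young's inequality then gives
\[
\sum_j 2^{-jsp}2^{jn}|u_{A_j}|^p\lesssim\sum_j\iint_{(A_j\cup A_{j+1})^2}\frac{|u(y)-u(z)|^p}{|y-z|^{n+sp}}\le 2[u]_{s,p}^p .
\]
Since $u$ has compact support, $u_{A_j}=0$ for large $j$, so one telescopes from $+\infty$ inward. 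This direction is the correct one: the weight $2^{(j-i)s}$ runs from $i>j$ down to $j$ and decays. Adding the within-annulus oscillation bound completes \eqref{FS_ineq}.
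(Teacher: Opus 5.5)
Your final summation step fails because you telescope in the wrong direction. Write $F_i:=\iint_{(A_i\cup A_{i+1})^2}|u(y)-u(z)|^p|y-z|^{-n-sp}\,dy\,dz$. Your bound $|u_{A_{i+1}}-u_{A_i}|\lesssim 2^{i(s-n/p)}F_i^{1/p}$ is correct. Telescoping from $+\infty$ gives $u_{A_j}=-\sum_{i\ge j}(u_{A_{i+1}}-u_{A_i})$, and inserting $|A_j|^{1/p}\sim 2^{jn/p}$ yields
\[
  2^{j(\frac np-s)}\,|u_{A_j}|\lesssim\sum_{i\ge j}2^{(i-j)(s-\frac np)}\,F_i^{1/p}.
\]
So the kernel is $2^{(i-j)(s-n/p)}$, not $2^{(j-i)s}$. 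Because $s-n/p>0$ it \emph{grows} as $i-j\to\infty$, and discrete Young's inequality does not apply.

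No refinement of the inward scheme can fix this. It uses only that $u$ vanishes near infinity. If it worked, it would give $\sum_j 2^{j(n-sp)}|u_{A_j}|^p\lesssim[u]_{s,p}^p$ for every $u\in C_{\mathrm{c}}^\infty(\R^n)$. That is false for any $u$ equal to $1$ near the origin: the left side diverges as $j\to-\infty$ since $n-sp<0$, while $[u]_{s,p}<\infty$. For $sp>n$ the inequality is driven by the vanishing of $u$ at the origin; compactness of the support plays no role.

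The repair is to keep the annuli of your second chain but use the direction of your first one. Since $u=0$ on a neighbourhood of $0$, $u_{A_i}=0$ for all $i\le i_0$. Hence $u_{A_j}=\sum_{i<j}(u_{A_{i+1}}-u_{A_i})$ and
\[
  2^{j(\frac np-s)}\,|u_{A_j}|\lesssim\sum_{i<j}2^{-(j-i)(s-\frac np)}\,F_i^{1/p}.
\]
This kernel is summable precisely because $sp>n$. Young's inequality together with the overlap bound $\sum_iF_i\le 2[u]_{s,p}^p$ then gives $\sum_j2^{j(n-sp)}|u_{A_j}|^p\lesssim[u]_{s,p}^p$.

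The oscillation term needs neither Morrey's estimate nor $sp>n$. By Jensen's inequality and $\operatorname{diam}A_j\le 2^{j+2}$,
\[
  \int_{A_j}|u-u_{A_j}|^p\lesssim 2^{jsp}\iint_{A_j\times A_j}|u(y)-u(z)|^p|y-z|^{-n-sp}\,dy\,dz,
\]
so the pointwise Morrey step in your first paragraph can be dropped.

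With this correction your argument is a valid elementary proof, with a non-sharp constant. It is the Gagliardo-seminorm analogue of the paper's Proposition~\ref{Poincare_dyadic}, whose recursion for $\mu_k$ also runs outward from the annulus on which $u$ vanishes, with contraction factor at most $(1+\epsilon)2^{-(sp-n)}<1$. The paper gives no proof of this theorem and simply quotes Frank--Seiringer.
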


In the critical range $sp=n$, a Hardy inequality with logarithmic weight holds
for functions supported outside a ball, due to Nguyen and Squassina; it follows
from \cite[Theorem 3.1]{NM18} with $\tau=p$, $\gamma=-s$, and $\alpha=0$.

\begin{theorem}[Nguyen--Squassina \cite{NM18}]\label{NS}
    Let $s\in(0,1)$, $sp=n$, and $R>0$. Then there exists $C_R>0$ such that
    \begin{equation}\label{ns_ineq}
        \norm{u\,\rho^{-s}\lg^{-1}}_{L^p(\R^n)}
        \le C_R\,[u]_{s,p},
        \qquad\forall\,u\in C_c^1(B_R^c).
    \end{equation}
\end{theorem}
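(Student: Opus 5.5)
This statement is quoted from Nguyen--Squassina, so the plan is to derive it from \cite[Theorem 3.1]{NM18} by checking that the parameter choice fits, and to supply the small reduction in the weights.

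The abstract result in \cite{NM18} is a Caffarelli--Kohn--Nirenberg type inequality with a logarithmic correction at the critical exponent. Roughly, for $\tau=p$, $\alpha=0$ and $\gamma=-s$ in the case $sp=n$, it reads
\[
\Big(\int_{|x|>R}\frac{|u|^p}{|x|^{n}\ln^p(2|x|/R)}\,dx\Big)^{1/p}\le C\,[u]_{s,p}
\]
for $u$ supported in $B_R^c$. I would first verify the hypotheses there: the balance condition $1/\tau+\gamma/n=1/p+(\alpha-s)/n$ and the critical case $1/\tau+\gamma/n=0$. Both reduce to $sp=n$, which is why the specialization is legitimate.

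Next I would compare weights on the support $|x|\ge R$. There $\rho^{-sp}=(1+|x|^2)^{-n/2}\le|x|^{-n}$. Moreover $\lg(x)=\ln(2+|x|^2)$ is bounded below by a positive constant and is comparable to $\ln(2|x|/R)$ for large $|x|$, with constants depending on $R$. On the bounded shell where $\ln(2|x|/R)$ is small, the logarithmic weight in \cite{NM18} is larger than ours, so the inequality only improves there. Hence
\[
\rho^{-sp}\lg^{-p}\le C_R\,|x|^{-n}\ln^{-p}(2|x|/R)\qquad\text{on }B_R^c,
\]
and \eqref{ns_ineq} follows with a constant depending on $R$.

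The only delicate point is keeping track of the exact normalization of the log weight in \cite{NM18}, for instance whether it uses $\ln(2|x|/R)$ or a log of $|x|/R$. If that weight degenerates near $|x|=R$ in the wrong direction, I would restrict to $|x|\ge 2R$ and control the shell $R\le|x|\le 2R$ separately. On that shell I would use the subcritical fractional Poincar\'e bound $\|u\|_{L^p(B_{2R})}\lesssim_R [u]_{s,p}$, which holds because $u$ vanishes on the ball $B_R$, a set of positive measure.

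Density from $C_c^\infty$ to $C_c^1$ is immediate, since both sides are continuous under $C^1$ convergence with uniformly bounded supports.
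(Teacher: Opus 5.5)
Your proposal is correct and takes the same route as the paper, which simply specializes \cite[Theorem 3.1]{NM18} to $\tau=p$, $\gamma=-s$, $\alpha=0$ (so that $1/\tau+\gamma/n=0$ exactly when $sp=n$); your weight comparison $\rho^{-sp}\lg^{-p}\le C_R\,|x|^{-n}\ln^{-p}(2|x|/R)$ on $\{|x|\ge R\}$ is precisely the reduction the paper leaves implicit. Your separate treatment of the shell $R\le|x|\le 2R$ is unnecessary, because $\ln(2|x|/R)\ge\ln 2$ there so the logarithmic weight never degenerates, but the argument you give for it is also valid.
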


The passage from $[u]_{s,p}$ to $\norm{\flap u}_{L^p}$ rests on the following
comparison.

\begin{lemma}\label{Gagliardo_vs_flap}
    Let $0<s<1$ and $2\le p<\infty$. Then
    \begin{equation}\label{Lambda_to_W}
        [u]_{s,p}\le C_{n,s,p}\,\norm{\flap u}_{L^p(\R^n)},
        \qquad\forall\,u\in C_{\mathrm{c}}^\infty(\R^n).
    \end{equation}
\end{lemma}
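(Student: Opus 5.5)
The plan is to identify both sides with Triebel--Lizorkin/Besov seminorms and use the embedding $F^s_{p,2}\hookrightarrow B^s_{p,p}$, which holds exactly when $p\ge 2$. The Gagliardo seminorm is, for $0<s<1$, equivalent to the homogeneous Besov seminorm $\|u\|_{\dot B^s_{p,p}}$. On the other side, $\|\flap u\|_{L^p}$ is equivalent to the homogeneous Triebel--Lizorkin seminorm $\|u\|_{\dot F^s_{p,2}}$, by the Littlewood--Paley square function theorem. Since $p\ge2$, Minkowski's inequality in the form $\|\cdot\|_{L^p(\ell^2)}\ge\|\cdot\|_{\ell^p(L^p)}$ (the $\ell^2\subset\ell^p$ inclusion) closes the argument.

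Concretely, let $\Delta_j$ be the Littlewood--Paley projections and $u\in C_c^\infty$, so that $\flap u\in L^p$ and $u=\sum_j\Delta_j u$ converges suitably; the sum converges in $L^\infty$ since $\widehat u$ is bounded near $0$. I will carry out the following steps.

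\textbf{Step 1.} Since $\Delta_j \flap u$ equals $2^{js}\tilde\Delta_j u$ with a modified multiplier, the square function theorem gives
\[
\Big\|\Big(\sum_j 2^{2js}|\Delta_j u|^2\Big)^{1/2}\Big\|_{L^p}\lesssim\|\flap u\|_{L^p}.
\]

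\textbf{Step 2.} For $p\ge2$, pointwise $\big(\sum_j a_j^p\big)^{1/p}\le\big(\sum_j a_j^2\big)^{1/2}$. Hence
\[
\sum_j 2^{jsp}\|\Delta_j u\|_p^p\lesssim\|\flap u\|_p^p.
\]

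\textbf{Step 3.} Bound the Gagliardo seminorm by this Besov sum, which is the standard difference characterization. Write
\[
[u]_{s,p}^p=\int_{\R^n}\frac{\|u(\cdot+h)-u\|_p^p}{|h|^{n+sp}}\,dh .
\]
For $|h|\sim2^{-k}$, split $u=\sum_{j\le k}\Delta_ju+\sum_{j>k}\Delta_ju$. For the low frequencies use the mean value bound $\|\tau_h\Delta_ju-\Delta_ju\|_p\lesssim 2^j|h|\,\|\Delta_ju\|_p$. For the high frequencies use $\|\tau_h\Delta_ju-\Delta_ju\|_p\le2\|\Delta_ju\|_p$. Then $0<s<1$ lets the discrete Hardy inequalities (Schur summation with geometric weights $2^{(j-k)(1-s)}$ for $j\le k$ and $2^{-(j-k)s}$ for $j>k$) give $[u]_{s,p}^p\lesssim\sum_j2^{jsp}\|\Delta_ju\|_p^p$.

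The main obstacle is the technical care in Step 1. One must justify the homogeneous decomposition for $u\in C_c^\infty$, since low frequencies do not decay. One must also verify that the $\tilde\Delta_j$ bound is really the lower square function estimate for $\flap u$, applied to $g=\flap u\in L^p$, with the multiplier $|\xi|^{-s}\psi(2^{-j}\xi)2^{js}$ uniformly Mikhlin.

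Everything used is classical harmonic analysis, so the lemma amounts to citing Triebel's embedding $\dot F^s_{p,2}\hookrightarrow\dot B^s_{p,p}$ for $p\ge2$ together with these characterizations.
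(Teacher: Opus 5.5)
Your proof is correct but organized differently from the paper's. Both rest on the same mechanism: the embedding $F^s_{p,2}\hookrightarrow B^s_{p,p}$, which holds precisely because $\ell^2\subset\ell^p$ when $p\ge2$.

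\textbf{The paper's route.} It stays with inhomogeneous spaces and cites three facts:
\begin{enumerate}
\item[(a)] $\|u\|_{L^p}+\|\flap u\|_{L^p}$ is equivalent to the Bessel potential norm (Stein);
\item[(b)] that space embeds into $B^s_{p,p}$ for $p\ge2$;
\item[(c)] $\|u\|_{L^p}+[u]_{s,p}$ is an equivalent $B^s_{p,p}$ norm.
\end{enumerate}
Together these give $[u]_{s,p}\le C(\|u\|_{L^p}+\|\flap u\|_{L^p})$. The paper then applies this to $u_\lambda=u(\lambda\,\cdot)$. Both $[\cdot]_{s,p}$ and $\|\flap\cdot\|_{L^p}$ scale like $\lambda^{s-n/p}$, while $\|\cdot\|_{L^p}$ scales like $\lambda^{-n/p}$, so letting $\lambda\to\infty$ removes the zeroth-order term.

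\textbf{Your route.} You prove the homogeneous inequality $\dot F^s_{p,2}\hookrightarrow\dot B^s_{p,p}$ directly:
\begin{enumerate}
\item[(a)] the square-function bound applied to $g=\flap u$ with the bump $|\eta|^{-s}\psi(\eta)$;
\item[(b)] the pointwise $\ell^2\subset\ell^p$ step, which is where $p\ge2$ is used;
\item[(c)] the first-difference characterization via Bernstein and a discrete Young/Schur bound with the summable kernel $2^{(j-k)(1-s)}$, $2^{-(j-k)s}$, which is where $0<s<1$ is used.
\end{enumerate}

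\textbf{What each buys.} The paper's argument is three citations plus a two-line dilation trick, and it never has to discuss convergence of a homogeneous Littlewood--Paley decomposition. Yours is self-contained modulo Littlewood--Paley theory and shows exactly where each hypothesis enters. It also does not need $u\in L^p$, only that $\sum_j\Delta_j u$ converges to $u$. For $u\in C_{\mathrm c}^\infty$ you handle this correctly: uniform convergence follows from $\hat u\in L^1$, and Fatou then justifies the triangle inequality over $j$.

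\textbf{Cosmetic slip in Step 1.} You first write $\Delta_j\flap u=2^{js}\tilde\Delta_j u$. That would bound the square function of $\tilde\Delta_j u$, not of $\Delta_j u$. The version you state at the end is the right one: apply the multiplier $2^{js}|\xi|^{-s}\psi(2^{-j}\xi)$ to $\flap u$. The fact you need there is the $L^p$ boundedness of the square function for that whole family (via vector-valued Calder\'on--Zygmund theory or random signs plus Khintchine). Uniform Mikhlin bounds for each $j$ separately are not enough.
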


\begin{proof}
    For $0<s<2$ and $1<p<\infty$, the norm
    $\norm{u}_{L^p}+\norm{\flap u}_{L^p}$ is equivalent to the norm of the
    Bessel potential space of order $s$ \cite{Stein61}; since $p\ge2$, this
    space embeds into the Besov space $B^s_{p,p}(\R^n)$, whose norm is
    equivalent to $\norm{u}_{L^p}+[u]_{s,p}$ \cite{Stein70}.
    Hence
    \[
        [u]_{s,p}\le C\big(\norm{u}_{L^p}+\norm{\flap u}_{L^p}\big),
        \qquad u\in C_{\mathrm{c}}^\infty(\R^n).
    \]
    Apply this bound to $u_\lambda:=u(\lambda\,\cdot)$; since
    \[
        [u_\lambda]_{s,p}=\lambda^{s-\frac np}[u]_{s,p},\qquad
        \norm{\flap u_\lambda}_{L^p}=\lambda^{s-\frac np}\norm{\flap u}_{L^p},
        \qquad
        \norm{u_\lambda}_{L^p}=\lambda^{-\frac np}\norm{u}_{L^p},
    \]
    dividing by $\lambda^{s-\frac np}$ and letting $\lambda\to\infty$ removes
    the zeroth-order term and yields \eqref{Lambda_to_W}.
\end{proof}

Combining the two theorems above with Lemma~\ref{Gagliardo_vs_flap}, we obtain
the Hardy inequalities in the form used in the sequel.

\begin{corollary}\label{Poincare_0<s<1}
    Let $n\ge2$ and $s\in(0,1)$.
    \begin{itemize}
        \item[\textup{(i)}] If $sp>n$, then
        \begin{equation}\label{Poincare_s<1}
            \Norm{\frac{u}{|\cdot|^{s}}}_{L^p(\R^n)}
            \le C\,\norm{\flap u}_{L^p(\R^n)},
            \qquad\forall\,u\in C_{\mathrm{c}}^\infty(\R^n\setminus\{0\}).
        \end{equation}
        \item[\textup{(ii)}] If $sp=n$ and $R>0$, then
        \begin{equation}\label{critical_Hardy_inequality}
            \norm{u\,\rho^{-s}\lg^{-1}}_{L^p(\R^n)}
            \le C_R\,\norm{\flap u}_{L^p(\R^n)},
            \qquad\forall\,u\in C_{\mathrm{c}}^\infty(B_R^c).
        \end{equation}
    \end{itemize}
\end{corollary}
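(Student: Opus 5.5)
The plan is to chain the two Gagliardo-seminorm Hardy inequalities, Theorem~\ref{FS} and Theorem~\ref{NS}, with the comparison of Lemma~\ref{Gagliardo_vs_flap}. The only hypothesis of that lemma not already visible in the statement is $p\ge2$, so the first step is to check that $p\ge2$ holds automatically in both regimes. This is where $n\ge2$ is used. In case (i), $sp>n$ with $s<1$ gives $p>n/s>n\ge2$. In case (ii), $sp=n$ gives $p=n/s>n\ge2$. So in both cases $2<p<\infty$, and Lemma~\ref{Gagliardo_vs_flap} applies.

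For (i), take $u\in C_{\mathrm{c}}^\infty(\R^n\setminus\{0\})$. This class is exactly the one in Theorem~\ref{FS}, so
\[
\Norm{\frac{u}{|\cdot|^{s}}}_{L^p}\le C_{n,s,p}[u]_{s,p}.
\]
Since $u$ also lies in $C_{\mathrm{c}}^\infty(\R^n)$, Lemma~\ref{Gagliardo_vs_flap} gives $[u]_{s,p}\le C\norm{\flap u}_{L^p}$. Combining the two bounds yields \eqref{Poincare_s<1}.

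For (ii), take $u\in C_{\mathrm{c}}^\infty(B_R^c)$. Since $C_{\mathrm{c}}^\infty(B_R^c)\subset C_c^1(B_R^c)$, Theorem~\ref{NS} gives
\[
\norm{u\rho^{-s}\lg^{-1}}_{L^p}\le C_R[u]_{s,p}.
\]
Again $u\in C_{\mathrm{c}}^\infty(\R^n)$, so Lemma~\ref{Gagliardo_vs_flap} bounds $[u]_{s,p}$ by $C\norm{\flap u}_{L^p}$. Combining gives \eqref{critical_Hardy_inequality}, with the constant depending on $R$ only through $C_R$.

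I do not expect a genuine obstacle. The one point that needs care is the range restriction $p\ge2$ in Lemma~\ref{Gagliardo_vs_flap}: the Bessel-to-Besov embedding it relies on fails for $p<2$. The observation that $n\ge2$ forces $p>2$ in both regimes is what makes the corollary hold without extra hypotheses, so I would state it explicitly at the start of the proof.
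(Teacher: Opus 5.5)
Your proposal is correct and is exactly the paper's argument: chain Theorem~\ref{FS} (resp.\ Theorem~\ref{NS}) with Lemma~\ref{Gagliardo_vs_flap}. You also check explicitly that $s<1$ and $n\ge2$ give $p\ge n/s>n\ge2$ in both regimes, so the lemma's restriction $p\ge2$ holds; this spells out the role of the hypothesis $n\ge2$, which the paper leaves implicit.
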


\subsection{The case $1\le s<2$}

In this subsection, we establish the Poincar\'e inequality for $1\leq s<2\leq n$. For $1<s<2$, the fractional Hardy inequality at order $s-1\in(0,1)$, which
below converts the weighted gradient term into
$\norm{(-\triangle)^{\frac{s-1}{2}}Du}_{L^p}$, is supplied by \eqref{sw} when
$(s-1)p<n$ and by Corollary~\ref{Poincare_0<s<1} when $(s-1)p\geq n$.

The rescaled form of the classical Poincar\'e inequality in a bounded domain is useful:
\begin{lemma}\label{classical_Poincare}
    Let $1<p<\infty$, $A$ be a bounded domain and $E\subset A$ of positive measure. Then
    $$\|u\|_{L^p(A)}\lesssim\|Du\|_{L^p(A)},\quad\forall u\in W^{1,p}(A)\text{ s.t. }\int_Eu=0.$$
\end{lemma}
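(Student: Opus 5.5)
We argue by contradiction with a compactness argument, the standard route for Poincaré inequalities of this kind. The domain $A$ is assumed bounded; to use Rellich--Kondrachov we also need $A$ regular enough (Lipschitz boundary, say) that $W^{1,p}(A)\hookrightarrow L^p(A)$ is compact. This is harmless: in the dyadic application $A$ will be an annulus or ball, and the phrase ``rescaled form'' indicates that the constant depends only on the shape of $(A,E)$ and scales linearly with the diameter.

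Suppose the inequality fails. Then there are $u_j\in W^{1,p}(A)$ with $\int_E u_j=0$, $\|u_j\|_{L^p(A)}=1$, and $\|Du_j\|_{L^p(A)}\to0$.

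1. The sequence $(u_j)$ is bounded in $W^{1,p}(A)$. By Rellich--Kondrachov and reflexivity ($1<p<\infty$), a subsequence converges strongly in $L^p(A)$ to some $u$, with $Du_j\rightharpoonup Du$ weakly in $L^p$.

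2. By weak lower semicontinuity, $\|Du\|_{L^p}\le\liminf\|Du_j\|_{L^p}=0$. Hence $Du=0$, and since $A$ is connected, $u\equiv c$ is constant.

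3. Strong $L^p$ convergence gives $\|u\|_{L^p(A)}=1$, so $c\neq0$. On the other hand, $E\subset A$ with $|E|<\infty$, so strong $L^p$ convergence on $E$ implies $L^1(E)$ convergence. Therefore $0=\int_E u_j\to\int_E u=c|E|$. Because $|E|>0$, this forces $c=0$, a contradiction.

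For the rescaled form, apply the inequality to $v(y)=u(Ry)$ on the fixed pair $(A,E)$ and scale back. This yields
\[
\|u\|_{L^p(RA)}\le C R\,\|Du\|_{L^p(RA)}
\]
with $C$ independent of $R$, which is the form needed for the annuli. The only real point is the compactness step, that is, making sure $A$ is an extension domain. Alternatively, the bound $\|u-u_A\|_{L^p}\lesssim\|Du\|_{L^p}$ on John or Lipschitz domains can be combined with $|u_A|=|u_A-u_E|\le|E|^{-1/p}\|u-u_A\|_{L^p(E)}$.
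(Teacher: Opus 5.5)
Your proof is correct and is the standard compactness proof of this fact. The paper gives no proof: it cites the inequality as classical and then applies it in Proposition~\ref{Poincare_dyadic}, on the annuli $A_k$ and on $A_k\cup A_{k+1}$ with $E=A_k$ or $E=A_{k+1}$.

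Your caveat about regularity corrects the statement as written. For an arbitrary bounded domain, such as a rooms-and-passages domain, the embedding $W^{1,p}(A)\hookrightarrow L^p(A)$ is not compact and the inequality can fail. The lemma should therefore be read for Lipschitz (or John) domains. This does not affect the paper. Every pair it uses is a dilate $(2^{k-1}A,2^{k-1}E)$ of one of finitely many fixed annular configurations, and your scaling step $\|u\|_{L^p(RA)}\le CR\|Du\|_{L^p(RA)}$ is exactly the "rescaled form" the paper needs. The connectedness you use in Step 2 holds because the subsection assumes $n\ge2$; for $n=1$ the annuli are disconnected.

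Your alternative route is closer to how the paper actually uses the lemma, by subtracting the annular means $\mu_k$. It combines $\|u-u_A\|_{L^p(A)}\lesssim\|Du\|_{L^p(A)}$ with $|u_A|\le|E|^{-1/p}\|u-u_A\|_{L^p(E)}$. This gives the explicit constant $1+(|A|/|E|)^{1/p}$ times the mean-oscillation Poincar\'e constant. The contradiction argument only shows that some constant exists, which is enough here.
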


\begin{proposition}\label{Poincare_dyadic}
    Let $s>0$, $\frac{n}{s}<p<\infty$, $\alpha\geq0$. Then
    $$\int_{\R^n}\frac{|u(x)|^p}{|x|^{sp}\lg^\alpha(x)}dx\lesssim\int_{\R^n}\frac{|Du(x)|^p}{|x|^{(s-1)p}\lg^\alpha(x)}dx,\quad\forall u\in C^1_{\mathrm{c}}(\R^n\setminus B_1).$$
\end{proposition}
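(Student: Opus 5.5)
The plan is to use the dyadic decomposition into annuli announced in the introduction. Write $A_j=\{2^{j}\le|x|<2^{j+1}\}$ for $j\ge0$. On $A_j$ the weights are essentially constant: $|x|\simeq 2^j$ and $\lg(x)\simeq j+1$. So it suffices to prove
\[
\sum_{j\ge0}2^{-jsp}(j+1)^{-\alpha}\|u\|_{L^p(A_j)}^p\lesssim\sum_{j\ge0}2^{-j(s-1)p}(j+1)^{-\alpha}\|Du\|_{L^p(A_j)}^p .
\]
Since $u\in C^1_{\mathrm c}(\R^n\setminus B_1)$, we have $u\equiv0$ on $A_0$ near $|x|=1$, and $u$ vanishes on $A_j$ for all large $j$. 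Both sums are therefore finite.

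The first step is a scaled Poincaré estimate on each annulus. Apply Lemma~\ref{classical_Poincare} on the doubled annulus $\tilde A=\{1\le|x|<4\}$ (connected since $n\ge2$) with $E=\{2\le|x|<4\}$, and rescale by $2^j$. With $m_j$ the average of $u$ over $A_{j+1}$, this gives
\[
\|u-m_j\|_{L^p(A_j\cup A_{j+1})}\lesssim 2^j\|Du\|_{L^p(A_j\cup A_{j+1})}.
\]
Two consequences follow. First, $\|u\|_{L^p(A_j)}\lesssim 2^{j}\|Du\|_{L^p(A_j\cup A_{j+1})}+2^{jn/p}|m_j|$. Second, $|m_j-m_{j+1}|\lesssim 2^{j(1-n/p)}\|Du\|_{L^p(A_j\cup A_{j+1}\cup A_{j+2})}$. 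Here the second bound comes from comparing both averages with the average on the middle annulus, with measures $\simeq 2^{jn}$.

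The second step controls the averages by telescoping from infinity, since $m_J=0$ for $J$ large: $|m_j|\le\sum_{i\ge j}|m_i-m_{i+1}|$. Write $b_i:=2^{-i(s-1)}(i+1)^{-\alpha/p}\|Du\|_{L^p(\text{3 annuli at }i)}$. Then
\[
2^{-js}(j+1)^{-\alpha/p}\,2^{jn/p}|m_j|\lesssim\sum_{i\ge j}2^{-(j-i)(s-n/p)}\Big(\tfrac{i+1}{j+1}\Big)^{\alpha/p}b_i
=\sum_{i\ge j}2^{-(i-j)(s-n/p)}\Big(\tfrac{i+1}{j+1}\Big)^{\alpha/p}b_i .
\]
Here the hypothesis $sp>n$ enters. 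Because $s-n/p>0$, the kernel $2^{-(i-j)(s-n/p)}$ decays geometrically in $i-j\ge0$. The factor $\big(\tfrac{i+1}{j+1}\big)^{\alpha/p}\le (1+(i-j))^{\alpha/p}$ grows only polynomially and is absorbed, since $\alpha\ge0$. So the kernel $K(i-j)$ is summable. Young's inequality for discrete convolutions then gives the $\ell^p$ bound $\sum_j(\cdot)^p\lesssim\sum_i b_i^p$, and the latter is bounded by the right-hand side of the proposition, counting each annulus at most three times.

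The local term $2^{-js}(j+1)^{-\alpha/p}2^j\|Du\|_{L^p}$ equals $b_j$ and is handled directly. Adding the two contributions and summing over $j$ proves the proposition.

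The only delicate point is the direction of telescoping. Summing from $j=0$ using $u=0$ near $B_1$ would produce a kernel growing like $2^{(j-i)(s-n/p)}$, which fails. One must sum from infinity, using compact support, and it is exactly there that $p>n/s$ is needed. The logarithmic weight causes no trouble precisely because $\alpha\ge0$ makes the ratio $(i+1)/(j+1)\ge1$ only polynomially large.
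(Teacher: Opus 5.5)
Your annular Poincaré step and your treatment of the local term are fine. The averaging step, however, fails because of a sign error. From $|m_i-m_{i+1}|\lesssim 2^{i(1-n/p)}\|Du\|_{L^p}=2^{i(s-n/p)}(i+1)^{\alpha/p}b_i$ you correctly obtain the kernel $2^{-(j-i)(s-n/p)}$. But $2^{-(j-i)(s-n/p)}=2^{(i-j)(s-n/p)}$, not $2^{-(i-j)(s-n/p)}$. For $i\ge j$ and $s-n/p>0$ this kernel \emph{grows} geometrically in $i-j$, and the factor $\big(\frac{i+1}{j+1}\big)^{\alpha/p}\ge1$ only makes it worse. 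So the discrete Young inequality is unavailable, and the bound on $\sum_j 2^{-j(sp-n)}(j+1)^{-\alpha}|m_j|^p$ collapses. Your closing paragraph has the two directions reversed: telescoping from infinity is the correct choice when $sp<n$, not when $sp>n$.

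There is a quick check that no repair within your scheme can work. Your argument never uses that $u$ vanishes near the sphere $|x|=1$; it uses only compact support. If it were correct, it would prove the inequality, with both integrals restricted to $\{|x|\ge1\}$, for every $u\in C^1_{\mathrm c}(\R^n)$. That is false. Take $u=\eta(\cdot/R)$ with $\eta=1$ on $B_1$ and $\operatorname{supp}\eta\subset B_2$. The left-hand side stays $\gtrsim1$, while the right-hand side is $\lesssim R^{n-sp}(\ln R)^{-\alpha}\to0$.

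The fix is to telescope outward from the inner boundary. Adjoin the annulus $\{1/2\le|x|<1\}$, on which $u\equiv0$, so its average is $0$, and write $|m_j|\le\sum_{i<j}|m_{i+1}-m_i|$. For $i<j$ the kernel becomes $2^{-(j-i)(s-n/p)}\big(\frac{i+1}{j+1}\big)^{\alpha/p}\le 2^{-(j-i)(s-n/p)}$. This is summable precisely because $sp>n$, and $\alpha\ge0$ now makes the logarithmic ratio at most $1$. Young's inequality then closes the argument.

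This is what the paper does, in recursive rather than convolution form. It starts from $\mu_0=0$ and uses $|\mu_{k+1}|^p\le(1+\epsilon)|\mu_k|^p+C_\epsilon 2^{kp}\fint_{A_k\cup A_{k+1}}|Du|^p$. After weighting by $2^{-(sp-n)k}k^{-\alpha}$, this becomes a contraction $a_{k+1}\le(1-\delta)a_k+c_k$, which it then sums.
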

\begin{proof}
    We apply the dyadic decomposition of the space. For $k\in\mathbb{N}^*$, we set
    $$A_k=\big\{x\in\R^n:2^{k-1}\leq|x|<2^k\big\},\quad\mu_k=\fint_{A_k}u.$$
    Since $|x|^{sp}\lg^\alpha(x)\sim2^{ksp}k^\alpha$ for $x\in A_k$, we have
    \begin{equation}\label{Poincare_proof}
        \int_{\R^n}\frac{|u(x)|^pdx}{|x|^{sp}\lg^\alpha(x)}\lesssim\sum_{k=1}^\infty\frac{1}{2^{skp}k^\alpha}\int_{A_k}|u|^p\leq\sum_{k=1}^\infty\frac{|A_k|}{2^{skp}k^\alpha}|\mu_k|^p+\sum_{k=1}^\infty\frac{1}{2^{skp}k^\alpha}\int_{A_k}|u-\mu_k|^p.
    \end{equation}
    The classical Poincar\'e inequality yields
    $$\int_{A_k}|u-\mu_k|^p\lesssim 2^{kp}\int_{A_k}|Du|^p;$$
    hence
    $$\sum_{k=1}^\infty\frac{1}{2^{skp}k^\alpha}\int_{A_k}|u-\mu_k|^p\lesssim\sum_{k=1}^\infty\frac{2^{kp}}{2^{skp}k^\alpha}\int_{A_k}|Du|^p\lesssim\int_{\R^n}\frac{|Du(x)|^p}{|x|^{(s-1)p}\lg^\alpha(x)}dx.$$
    To deal with the first summation, we apply Poincar\'e inequality:
    $$|\mu_k-\mu_{k+1}|\leq\Big(\fint_{A_k\cup A_{k+1}}|u-\mu_k|^p\Big)^{\tfrac{1}{p}}+\Big(\fint_{A_k\cup A_{k+1}}|u-\mu_{k+1}|^p\Big)^{\tfrac{1}{p}}\lesssim2^k\Big(\fint_{A_k\cup A_{k+1}}|Du|^p\Big)^{\tfrac{1}{p}},$$
    therefore
    $$|\mu_{k+1}|^p\leq\left(|\mu_k|+2^kC\Big(\fint_{A_k\cup A_{k+1}}|Du|^p\Big)^{\tfrac{1}{p}}\right)^p\leq(1+\epsilon)|\mu_k|^p+C_\epsilon2^{kp}\fint_{A_k\cup A_{k+1}}|Du|^p,$$
    $$\frac{|\mu_{k+1}|^p}{2^{(sp-n)(k+1)}(k+1)^\alpha}\leq\frac{(1+\epsilon)k^\alpha}{2^{sp-n}(k+1)^\alpha}\frac{|\mu_k|^p}{2^{(sp-n)k}k^\alpha}+\frac{C_\epsilon}{2^{(s-1)p(k+1)}(k+1)^\alpha}\int_{A_k\cup A_{k+1}}|Du|^p.$$
    Due to $sp-n>0$, we may choose $\epsilon$ so small that $\frac{(1+\epsilon)k^\alpha}{2^{sp-n}(k+1)^\alpha}\leq1-\delta$ for some $\delta>0$. Taking the sum, we obtain
    $$\sum_{k=1}^{N+1}\frac{|\mu_k|^p}{2^{(sp-n)k}k^\alpha}\leq(1-\delta)\sum_{k=0}^N\frac{|\mu_k|^p}{2^{(sp-n)k}k^\alpha}+C\sum_{k=0}^N\frac{1}{2^{(s-1)pk}k^\alpha}\int_{A_k\cup A_{k+1}}|Du|^p,$$
    $$\sum_{k=1}^\infty\frac{|\mu_k|^p}{2^{(sp-n)k}k^\alpha}\leq C'\sum_{k=1}^\infty\frac{1}{2^{(s-1)pk}(k+1)^\alpha}\int_{A_k}|Du|^p.$$
    Then \eqref{Poincare_proof} implies the assertion.
\end{proof}

\begin{corollary}\label{Poincare_1_s_2_noncritical}
    Applying the above Poincar\'e inequality to $Du$ for $s-1\in(0,1)$ and the Calder\'on--Zygmund theorem, passing to a rescaling if necessary, the following assertions hold:
    \begin{itemize}
        \item If $n<p<\infty$, then for $u\in C_{\mathrm{c}}^\infty(\R^n\setminus\{0\})$,
        $$\Big\|\frac{u}{|\cdot|}\Big\|_{L^p(\R^n)}\lesssim\|Du\|_{L^p(\R^n)}\lesssim\|(-\triangle)^{1/2}\,u\|_{L^p}.$$
        \item If $1<s<2$, $\frac{n}{s}<p<\infty$, $(s-1)p\neq n$, then for $u\in C_{\mathrm{c}}^\infty(\R^n\setminus\{0\})$,
        $$\Big\|\dfrac{u}{|\cdot|^s}\Big\|_{L^p(\R^n)}\lesssim\Big\|\dfrac{Du}{|\cdot|^{s-1}}\Big\|_{L^p(\R^n)}\lesssim\|(-\triangle)^{\tfrac{s-1}{2}}Du\|_{L^p(\R^n)}\lesssim\|\flap u\|_{L^p(\R^n)}.$$
        \item If $1<s<2$, $(s-1)p=n$, then for $u\in C_{\mathrm{c}}^\infty(\R^n\setminus B_1)$,
        $$\Big\|\frac{u}{\rho^s\lg}\Big\|_{L^p(\R^n)}\lesssim\Big\|\frac{Du}{\rho^{s-1}\lg}\Big\|_{L^p(\R^n)}\lesssim\|(-\triangle)^{\tfrac{s-1}{2}}Du\|_{L^p(\R^n)}\lesssim\|\flap u\|_{L^p(\R^n)}.$$
    \end{itemize}
\end{corollary}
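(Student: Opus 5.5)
Each of the three chains in Corollary~\ref{Poincare_1_s_2_noncritical} has three links: a Poincar\'e step from $u$ to $Du$, a Hardy step applied componentwise to $Du$ at order $s-1$, and a Calder\'on--Zygmund step returning to $\flap u$. I will establish them in that order.

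\textbf{Poincar\'e step.} Proposition~\ref{Poincare_dyadic} is stated only for $u\in C^1_{\mathrm c}(\R^n\setminus B_1)$ and uses $\lg$. For $u\in C_{\mathrm c}^\infty(\R^n\setminus\{0\})$ I rescale: $u_\lambda=u(\lambda^{-1}\cdot)$ with $\lambda$ large has support outside $B_1$.
\begin{itemize}
\item With $\alpha=0$ both sides of the proposition are pure powers of $|x|$, so they scale identically. This gives $\|u/|\cdot|^s\|_{L^p}\lesssim\|Du/|\cdot|^{s-1}\|_{L^p}$ with a constant independent of $\lambda$, whenever $sp>n$.
\item For $s=1$, $p>n$, this is the first inequality of bullet one.
\item For bullet three, $(s-1)p=n$ forces $sp=n+p>n$. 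I take $\alpha=p$; on $\R^n\setminus B_1$ one has $\rho\sim|x|$, so the proposition gives the claimed bound directly, with no rescaling.
\end{itemize}

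\textbf{Hardy step.} Each component $\partial_ju$ is again in $C_{\mathrm c}^\infty(\R^n\setminus\{0\})$, respectively in $C_{\mathrm c}^\infty(B_1^c)$.
\begin{itemize}
\item For $1<s<2$ with $(s-1)p<n$, I apply \eqref{sw} at order $s-1$ to get $\|\partial_ju/|\cdot|^{s-1}\|_{L^p}\lesssim\|(-\triangle)^{\frac{s-1}{2}}\partial_ju\|_{L^p}$.
\item For $(s-1)p>n$, Corollary~\ref{Poincare_0<s<1}(i) gives the same bound.
\item For $(s-1)p=n$, Corollary~\ref{Poincare_0<s<1}(ii) with $R=1$ gives the logarithmic version.
\end{itemize}

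\textbf{Obstacle in the Hardy step.} The corollary passes through Lemma~\ref{Gagliardo_vs_flap}, which assumes $p\ge2$. In the supercritical and critical cases $p\ge n/(s-1)>n\ge2$, so the restriction is harmless. This is the point I would check most carefully. Corollary~\ref{Poincare_0<s<1} also assumes $n\ge2$, which is the standing assumption of this subsection.

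\textbf{Calder\'on--Zygmund step.} Since $\partial_j$ and $(-\triangle)^{\frac{s-1}{2}}$ commute on Schwartz functions,
\[
(-\triangle)^{\frac{s-1}{2}}\partial_ju=-R_j\,\flap u,\qquad R_j=\partial_j(-\triangle)^{-1/2}\ \text{(the Riesz transform)}.
\]
This identity is read on the Fourier side: $i\xi_j|\xi|^{s-1}=\frac{i\xi_j}{|\xi|}|\xi|^s$. The $L^p$-boundedness of $R_j$ then gives
\[
\|(-\triangle)^{\frac{s-1}{2}}Du\|_{L^p}\lesssim\|\flap u\|_{L^p}.
\]
For $s=1$ it gives $\|Du\|_{L^p}\lesssim\|(-\triangle)^{1/2}u\|_{L^p}$. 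Chaining the three inequalities yields all three bullets.
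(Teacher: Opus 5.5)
Your proposal is correct and follows essentially the same route as the paper: Proposition~\ref{Poincare_dyadic} for the step from $u$ to $Du$ (with $\alpha=0$ plus scaling, or $\alpha=p$ in the critical case), the order-$(s-1)$ Hardy inequality \eqref{sw} or Corollary~\ref{Poincare_0<s<1} applied to each $\partial_ju$, and the Riesz-transform identity to return to $\flap u$; your check that $p\ge n/(s-1)>n\ge2$ makes the $p\ge2$ hypothesis of Lemma~\ref{Gagliardo_vs_flap} harmless is a useful detail the paper leaves implicit. One cosmetic slip: with $R_j=\partial_j(-\triangle)^{-1/2}$ the identity is $(-\triangle)^{\frac{s-1}{2}}\partial_ju=R_j\flap u$ without the minus sign, which does not affect the norm bound.
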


Now we derive the inequality for critical values $sp=n$ with $s\ge 1$:
\begin{proposition}\label{Poincare_critical_s_ge_1}
    Let $1\leq s<2$, $1<p<\infty$, $sp=n$. Then
    $$\int_{\R^n}\frac{|u(x)|^p}{|x|^n\lg^p(x)}dx\lesssim\int_{\R^n}\frac{|Du(x)|^p}{|x|^{(s-1)p}}dx,\quad u\in C^1_{\mathrm{c}}(\R^n\setminus B_1).$$
\end{proposition}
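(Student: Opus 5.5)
We follow the dyadic scheme of the proof of Proposition~\ref{Poincare_dyadic}, with the same annuli $A_k=\{2^{k-1}\le|x|<2^k\}$ and averages $\mu_k=\fint_{A_k}u$. Since $sp=n$, the weight on the right is $|x|^{-(s-1)p}=|x|^{p-n}$, and on $A_k$ the left weight is $|x|^{-n}\lg^{-p}\sim 2^{-kn}k^{-p}$. As before, we split $\int_{A_k}|u|^p\lesssim |A_k|\,|\mu_k|^p+\int_{A_k}|u-\mu_k|^p$.

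The oscillation part is handled exactly as before. The rescaled Poincar\'e inequality (Lemma~\ref{classical_Poincare}) gives $\int_{A_k}|u-\mu_k|^p\lesssim 2^{kp}\int_{A_k}|Du|^p$. Hence
\[
\sum_k\frac{1}{2^{kn}k^p}\int_{A_k}|u-\mu_k|^p\lesssim\sum_k\frac{2^{k(p-n)}}{k^p}\int_{A_k}|Du|^p\lesssim\int\frac{|Du|^p}{|x|^{(s-1)p}},
\]
and the factor $k^{-p}\le1$ is simply discarded.

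The mean part is where the proof differs, because the geometric contraction used before now fails: $sp-n=0$. We write $a_k:=2^{k(1-n/p)}\big(\int_{A_k\cup A_{k+1}}|Du|^p\big)^{1/p}$, so that $\sum_k a_k^p\lesssim\int|Du|^p|x|^{p-n}$. Poincar\'e on $A_k\cup A_{k+1}$ gives $|\mu_{k+1}-\mu_k|\lesssim 2^k\big(\fint_{A_k\cup A_{k+1}}|Du|^p\big)^{1/p}\sim a_k$. Since $u$ has compact support, $\mu_k=0$ for all large $k$, so we can telescope from infinity:
\[
|\mu_k|\le\sum_{j\ge k}|\mu_{j+1}-\mu_j|\lesssim\sum_{j\ge k}a_j .
\]
Telescoping from infinity is essential here. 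Starting from $k=1$ would require control of $\mu_1$, which is not available because $u$ need not vanish near $B_1$.

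The mean part is therefore bounded by $\sum_k |A_k|2^{-kn}k^{-p}|\mu_k|^p\lesssim\sum_k k^{-p}\big(\sum_{j\ge k}a_j\big)^p$. This is exactly the dual discrete Hardy inequality $\sum_k k^{-p}\big(\sum_{j\ge k}a_j\big)^p\le C_p\sum_j a_j^p$, valid for $p>1$. Controlling this tail-sum estimate is the main obstacle, and it is precisely what forces the logarithmic power $\lg^{p}$ rather than a smaller one. Its proof is standard: by duality it is equivalent to the classical Hardy inequality $\sum_j\big(\frac1j\sum_{k\le j}b_k\big)^{p'}\lesssim\sum b_k^{p'}$ for $b_k=k^{-1}c_k$ with $c\in\ell^{p'}$. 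Alternatively, it follows from H\"older with weights $j^{\theta}$: apply H\"older to $\sum_{j\ge k}a_j j^{\theta}j^{-\theta}$ with $0<\theta<1/p'$ (so that $\sum_{j\ge k}j^{-\theta p'}$ converges, giving $k^{1-\theta p'}$ after the $1/p'$ power), then exchange the order of summation.

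Combining the oscillation and mean bounds gives the asserted inequality. The case $s=1$, $p=n$ is included, since nothing above uses $s>1$.
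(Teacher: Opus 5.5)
Your estimate of the oscillation terms and the bound $|\mu_{k+1}-\mu_k|\lesssim a_k$ are fine. The mean part, however, rests on a false inequality. The ``dual discrete Hardy inequality'' $\sum_k k^{-p}\big(\sum_{j\ge k}a_j\big)^p\le C_p\sum_j a_j^p$ fails for every $p>1$. Take $a_j=1$ for $1\le j\le N$ and $a_j=0$ otherwise: the term $k=1$ alone equals $N^p$, while the right-hand side is $N$. Tail sums are $\ell^1$-quantities and are not controlled by $\norm{a}_{\ell^p}$.

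Both of your justifications fail for this reason. The adjoint of $a\mapsto\big(k^{-1}\sum_{j\ge k}a_j\big)_k$ is $b\mapsto\big(\sum_{k\le j}b_k/k\big)_j$. Here $1/k$ sits inside the sum, not in front as in Hardy's average $\frac1j\sum_{k\le j}b_k$. Its entries increase to $\sum_k b_k/k\neq0$, so it does not even map $\ell^{p'}$ into $\ell^{p'}$. In the H\"older variant, $\sum_{j\ge k}j^{-\theta p'}$ diverges for $\theta<1/p'$, and for $\theta>1/p'$ you are left with $\sum_j j^{\theta p}a_j^p$ on the right.

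This cannot be patched within your scheme. Apart from restricting the left integral to $\{|x|\ge1\}$, your argument never uses that $u$ vanishes on $B_1$, only that its support is compact. For such functions the estimate is false. Consider the Lipschitz function $u=1$ on $\{1\le|x|\le2^N\}$, $u=2-\log_2|x|/N$ on $\{2^N\le|x|\le2^{2N}\}$, and $u=0$ beyond. It has $\int_{\{|x|\ge1\}}|u|^p|x|^{-n}\lg^{-p}\gtrsim1$, already from $A_1$. On the other hand, $\int|Du|^p|x|^{p-n}\,dx\sim N^{-p}\int_{2^N}^{2^{2N}}\frac{dr}{r}\sim N^{1-p}\to0$.

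The missing idea is to anchor at the inner boundary, which is the opposite of your remark that $\mu_1$ is out of reach. Since $u\equiv0$ on $A_0=\{\frac12\le|x|<1\}\subset B_1$, we have $\mu_0=0$, and Poincar\'e on $A_0\cup A_1$ controls $\mu_1$. Telescoping upward gives $|\mu_k|\le\sum_{j=0}^{k-1}|\mu_{j+1}-\mu_j|\lesssim\sum_{j=0}^{k-1}a_j$. The classical forward Hardy inequality $\sum_{k\ge1}\big(\frac1k\sum_{j=0}^{k-1}a_j\big)^p\le(p')^p\sum_{j\ge0}a_j^p$ then yields $\sum_k k^{-p}|\mu_k|^p\lesssim\int|Du|^p|x|^{p-n}$, which closes the proof.

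In the radial case, with $t=\ln|x|$, the statement follows from the one-dimensional Hardy inequality $\int_0^\infty|\phi|^pt^{-p}\,dt\le(p')^p\int_0^\infty|\phi'|^p\,dt$ for $\phi(0)=0$. This is why the anchor at $|x|=1$ is indispensable.

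The paper does exactly this in recursive form. The Nguyen--Squassina lemma (Lemma~\ref{precise_pointwise_interpolation}) with $c_k=\big(\frac{k+1}{k+1/2}\big)^{p-1}$ gives
\[
\frac{|\mu_{k+1}|^p}{(k+1)^{p-1}}\le\frac{|\mu_k|^p}{(k+1/2)^{p-1}}+C\,2^{(p-n)k}\int_{A_k\cup A_{k+1}}|Du|^p .
\]
This is summed upward from $k=0$, where $\mu_0=0$. The bound $k^{1-p}-(k+\frac12)^{1-p}\gtrsim k^{-p}$ then converts the telescoped sum into the weighted estimate on $\sum_k k^{-p}|\mu_k|^p$.
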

\begin{proof}
    With the same notations as in the proof of Proposition~\ref{Poincare_dyadic}, we write
    \begin{equation}
        \int_{\R^n}\frac{|u(x)|^pdx}{|x|^n\lg^p(x)}\lesssim\sum_{k=1}^\infty\frac{1}{2^{kn}k^p}\int_{A_k}|u|^p\leq\sum_{k=1}^\infty\frac{|A_k|}{2^{kn}k^p}|\mu_k|^p+\sum_{k=1}^\infty\frac{1}{2^{kn}k^p}\int_{A_k}|u-\mu_k|^p,
    \end{equation}
    where the last term is dominated by
    $$\sum_{k=1}^\infty\frac{2^{kp}}{2^{nk}k^p}\int_{A_k}|Du|^p\lesssim\int_{\R^n}\frac{|Du(x)|^p}{|x|^{(s-1)p}}dx.$$
    We apply Lemma~\ref{precise_pointwise_interpolation} below \cite[Lemma 3.2]{NM18} with $c_k=\frac{(k+1)^{p-1}}{(k+1/2)^{p-1}}$ to
    $$|\mu_{k+1}|^p\leq\left(|\mu_k|+2^k\Big(\fint_{A_k\cup A_{k+1}}|Du|^p\Big)^{\tfrac{1}{p}}\right)^p$$
    to obtain that
    $$\frac{|\mu_{k+1}|^p}{(k+1)^{p-1}}\leq\frac{c_k|\mu_k|^p}{(k+1)^{p-1}}+\frac{2^{kp}C}{(k+1)^{p-1}(c_k-1)^{p-1}}\fint_{A_k\cup A_{k+1}}|Du|^p.$$
    The sequence $\{(k+1)(c_k-1)\}$ has a positive lower bound. Hence
    $$\sum_{k=1}^{N+1}\frac{|\mu_k|^p}{k^{p-1}}\leq\sum_{k=0}^N\frac{|\mu_k|^p}{(k+1/2)^p}+C\sum_{k=0}^N2^{(p-n)k}\int_{A_k\cup A_{k+1}}|Du|^p.$$
    Subtracting the first term on the right-hand side to the left, by $k^{-p}\lesssim k^{1-p}-(k+1/2)^{1-p}$ we see
    $$\sum_{k=1}^N\frac{|\mu_k|^p}{k^p}\lesssim\sum_{k=1}^\infty2^{(p-n)k}\int_{A_k}|Du|^p\lesssim\int_{\R^n}\frac{|Du(x)|^p}{|x|^{(s-1)p}}dx.$$
\end{proof}

\begin{lemma}[\cite{NM18}]\label{precise_pointwise_interpolation}
    Let $\Lambda>1$ and $\tau>1$. There exists $C=C(\Lambda,\tau)>0$, depending only on $\Lambda$ and $\tau$, such that for all $1<c<\Lambda$,
    $$(a+b)^\tau\leq ca^\tau+\frac{C}{(c-1)^{\tau-1}}b^\tau.$$
\end{lemma}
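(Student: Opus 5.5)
Both sides of the inequality in Lemma~\ref{precise_pointwise_interpolation} are positively homogeneous of degree $\tau$ in $(a,b)$, and it is trivial when $b=0$ (since $c>1$). So I would take $a,b\ge0$ and $b>0$, and normalize $b=1$. Writing $a=t\ge0$, it suffices to show
\[
  (t+1)^\tau\le c\,t^\tau+\frac{C}{(c-1)^{\tau-1}},\qquad t\ge0,\ 1<c<\Lambda,
\]
with $C=C(\Lambda,\tau)$. Equivalently, I would bound
\[
  \sup_{t\ge0} g(t),\qquad g(t):=(t+1)^\tau-c\,t^\tau,
\]
by $C(c-1)^{1-\tau}$.

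The first step is to locate the maximum of $g$. Since $c>1$, we have $g(t)\to-\infty$ as $t\to\infty$, and $g(0)=1$. Setting $g'(t)=\tau\big((t+1)^{\tau-1}-c\,t^{\tau-1}\big)=0$ gives $(1+1/t)^{\tau-1}=c$. Writing $\gamma:=c^{1/(\tau-1)}>1$, the unique critical point is $t_*=1/(\gamma-1)$, and $g$ increases before it and decreases after it. Using $(t_*+1)^{\tau-1}=c\,t_*^{\tau-1}$, I get
\[
  g(t_*)=(t_*+1)^{\tau-1}(t_*+1)-c\,t_*^{\tau-1}t_*=c\,t_*^{\tau-1}=\frac{c}{(\gamma-1)^{\tau-1}}.
\]

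The remaining step is to compare $\gamma-1$ with $c-1$, and this is where the dependence on $\Lambda$ enters. By the mean value theorem, $c-1=\gamma^{\tau-1}-1=(\tau-1)\xi^{\tau-2}(\gamma-1)$ for some $\xi\in(1,\gamma)$. Since $\gamma<\Lambda^{1/(\tau-1)}$, the factor $\xi^{\tau-2}$ lies between two positive constants depending only on $\Lambda$ and $\tau$, whichever sign $\tau-2$ has. Hence $\gamma-1\ge c_0(\Lambda,\tau)\,(c-1)$, and therefore
\[
  \sup_{t\ge0} g\le\frac{\Lambda}{c_0^{\tau-1}(c-1)^{\tau-1}}.
\]
This gives the claim with $C=\Lambda c_0^{1-\tau}$. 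Because $\sup g\ge g(0)=1$, the bound also covers the endpoint $t=0$.

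No step here is a real obstacle. The only point needing care is keeping the constant uniform: the upper bound $c<\Lambda$ is exactly what makes $\xi^{\tau-2}$ comparable to $1$ in the mean value step.
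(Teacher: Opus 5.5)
Your proof is correct. The paper gives no argument of its own for this lemma: it is imported from \cite[Lemma~3.2]{NM18} and used as a black box in Proposition~\ref{Poincare_critical_s_ge_1}. There $a=|\mu_k|\ge0$, so your tacit restriction to $a,b\ge0$ is the right reading. Your computation therefore supplies a self-contained proof rather than competing with one in the paper.

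All the steps check out:
\begin{itemize}
\item the unique critical point $t_*=1/(\gamma-1)$;
\item the value $g(t_*)=c\,(\gamma-1)^{1-\tau}$;
\item the mean value comparison. Only the upper bound $\xi^{\tau-2}\le\max\{1,\Lambda^{(\tau-2)/(\tau-1)}\}$ is actually needed, and it gives $\gamma-1\ge(c-1)/\big((\tau-1)\max\{1,\Lambda^{(\tau-2)/(\tau-1)}\}\big)$.
\end{itemize}

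For comparison, the same sharp constant follows from convexity in one line. With $\theta:=c^{-1/(\tau-1)}=1/\gamma\in(0,1)$,
\[
(a+b)^\tau=\Big(\theta\,\frac{a}{\theta}+(1-\theta)\,\frac{b}{1-\theta}\Big)^\tau\le\theta^{1-\tau}a^\tau+(1-\theta)^{1-\tau}b^\tau=c\,a^\tau+\frac{c}{(\gamma-1)^{\tau-1}}\,b^\tau ,
\]
with equality exactly when $a/b=t_*$. This bypasses the monotonicity analysis of $g$, while your maximization makes the extremal ratio explicit. In both versions, $\Lambda$ enters only through the prefactor $c<\Lambda$ and the comparison of $\gamma-1$ with $c-1$.
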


\begin{corollary}\label{Poincare_sp_n}
    Applying Hardy's inequality to $Du$ for $s-1\in(0,1)$ and the Calder\'on--Zygmund theorem, the following holds for $u\in C_{\mathrm{c}}^\infty(\R^n\setminus B_1)$:
    \begin{itemize}
        \item $\Big\|\dfrac{u}{\rho\lg}\Big\|_{L^n(\R^n)}\lesssim\|Du\|_{L^p(\R^n)}\lesssim\|(-\triangle)^{1/2}\,u\|_{L^p(\R^n)}$.
        \item If $1<s<2$, $1<p<\infty$, $sp=n$, then
        $$\Big\|\frac{u}{\rho^s\lg}\Big\|_{L^p(\R^n)}\lesssim\Big\|\frac{Du}{\rho^{s-1}}\Big\|_{L^p(\R^n)}\lesssim\|\flap u\|_{L^p(\R^n)}.$$
    \end{itemize}
\end{corollary}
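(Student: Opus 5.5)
The plan is to chain Proposition~\ref{Poincare_critical_s_ge_1} with a Hardy-type inequality for $Du$ at order $s-1$ and then with Calder\'on--Zygmund bounds, exactly as in Corollary~\ref{Poincare_1_s_2_noncritical}. Since $u\in C_{\mathrm{c}}^\infty(\R^n\setminus B_1)$, we have $\rho\sim|x|$ and $\lg\sim\ln|x|$ up to constants on the support. Proposition~\ref{Poincare_critical_s_ge_1} therefore gives
\[
\Big\|\frac{u}{\rho^{s}\lg}\Big\|_{L^p}^p\sim\int\frac{|u|^p}{|x|^{n}\lg^p}\lesssim\int\frac{|Du|^p}{|x|^{(s-1)p}}\sim\Big\|\frac{Du}{\rho^{s-1}}\Big\|_{L^p}^p,
\]
where we used $sp=n$. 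This handles the first inequality in both bullets; for $s=1$ the exponent $(s-1)p$ vanishes and the right side is just $\|Du\|_{L^p}$ (the $L^n$ in the first bullet being $L^p$ with $p=n$).

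For the second inequality with $s=1$, write $D_j u=R_j(-\triangle)^{1/2}u$ with $R_j$ the Riesz transforms. Their $L^p$ boundedness, which is the Calder\'on--Zygmund theorem, gives $\|Du\|_{L^p}\lesssim\|(-\triangle)^{1/2}u\|_{L^p}$.

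For $1<s<2$ we need $\|Du/|x|^{s-1}\|_{L^p}\lesssim\|(-\triangle)^{\frac{s-1}{2}}Du\|_{L^p}$. Here $(s-1)p=n-p<n$, so this is the Stein--Weiss inequality \eqref{sw} at order $s-1$ applied to $D_ju\in\mathscr S$. Because $\rho^{-(s-1)}\le|x|^{-(s-1)}$, the weight $\rho$ may replace $|x|$ at no cost. Finally,
\[
(-\triangle)^{\frac{s-1}{2}}D_ju=R_j\flap u,
\]
by the Fourier multiplier identity $|\xi|^{s-1}i\xi_j=(i\xi_j/|\xi|)\,|\xi|^s$, valid for Schwartz functions. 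The $L^p$ boundedness of $R_j$ then yields $\lesssim\|\flap u\|_{L^p}$.

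The only delicate point is checking that the hypotheses of \eqref{sw} hold, namely $(s-1)p<n$ and $u\in\mathscr S$. This is automatic here because $p>1$ forces $(s-1)p=n-p<n$. No subtlety arises from the support condition, since \eqref{sw} holds on all of $\mathscr S$. Hence there is essentially no obstacle; the main care is the comparison of weights $\rho\sim|x|$ and $\lg(x)\sim\ln(2+|x|)$ away from $B_1$.
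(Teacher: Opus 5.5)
Your proof is correct and follows the paper's route exactly. You use Proposition~\ref{Poincare_critical_s_ge_1} (with $\rho\sim|x|$ on $|x|\ge1$) for the first inequality, then the Stein--Weiss/Hardy inequality \eqref{sw} at order $s-1$ applied to $Du$ (admissible since $(s-1)p=n-p<n$), and finally the Calder\'on--Zygmund step via $(-\triangle)^{\frac{s-1}{2}}D_j=R_j\flap$. One harmless slip: $\lg\sim\ln|x|$ fails near $|x|=1$, but you never use it, since the same weight $\lg$ appears on both sides of Proposition~\ref{Poincare_critical_s_ge_1}.
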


\section{Isomorphism theorems}\label{sec:iso}

In this section, we prove the main isomorphism theorem (Theorem~\ref{main_isom}) and some other related isomorphisms. The strategy to prove Theorem~\ref{main_isom} is as follows: we first define $Y$ as the closure of test functions in $\Lambda^{s,p}(\R^n)$, prove that $\flap :Y/\mathcal{P}_{[s-n/p]}\to L^p(\R^n)$ is an isomorphism, and then show $Y=\Lambda^{s,p}(\R^n)$.

\subsection{The Poincar\'e inequality on $Y$}

One can easily checked that Schwartz space ${\mathscr S}(\R^n)\in \Lambda ^{s,p}(\R^n)$, thus the following definition makes sense:
\begin{definition}\label{def_Y}
    Let $s\in(0,2)$ and $p\in(1,\infty)$. Define $Y$ to be the closure of
    $\mathscr S(\R^n)$ in $\Lambda^{s,p}(\R^n)$.
\end{definition}

We first state an important remainder estimate, which is derived by the
fractional Leibniz rule of Li \cite{Li19}
(with the case $s\in(0,1)$ originating in the commutator estimates of
Kenig, Ponce and Vega \cite{KPV93}). For $s\in[1,2)$, a first-order
term is present, which we then absorb.

\begin{lemma}\label{remainder_estimate}
    Let $s\in(0,2)$ and $p\in(1,\infty)$. Then for all $f, g\in\mathscr S(\R^n)$, define the remainder term
   \[
   R_s(f,g) := \flap(fg)-f\,\flap g-g\,\flap f.
   \]
   Then
   \[
       \norm{R_s(f,g)}_{L^p(\R^n)}\lesssim \begin{cases}
           \norm{\flap f}_{L^p(\R^n)}\norm {g}_{L^\infty(\R^n)}, &\text{if }s\in (0,1),\\
          \norm{(-\triangle)^{\frac{s-1}{2}}f}_{L^p(\R^n)}(\norm {Dg}_{L^\infty(\R^n)}+\norm {(-\triangle)^{1/2}g}_{L^\infty(\R^n)}), &\text{if }s\in [1,2).
       \end{cases}
   \]
\end{lemma}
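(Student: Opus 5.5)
For $s\in(0,1)$ we will simply quote the Kenig--Ponce--Vega commutator estimate, in the form given by Li's fractional Leibniz rule \cite{Li19}. With $f,g\in\mathscr S$ it reads
\[
\|\flap(fg)-f\flap g-g\flap f\|_{L^p}\lesssim \|\flap f\|_{L^p}\|g\|_{L^\infty},
\]
valid for $0<s<1$ and $1<p<\infty$. We will check that the endpoint placement ($L^p\times L^\infty$) is allowed there: for $0<s<1$ the bilinear remainder may take all $s$ derivatives on one factor, with no derivative left on the other.

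For $s\in[1,2)$ we will use Li's general expansion. For $0<s<2$ it says
\[
\flap(fg)=f\flap g+g\flap f+\sum_{|\alpha|=1}\big(\partial^\alpha f\,T_\alpha g+\partial^\alpha g\,T_\alpha f\big)+\mathcal R,
\]
where the $T_\alpha$ are Fourier multipliers homogeneous of degree $s-2+1=s-1$, namely $T_\alpha\sim \partial^\alpha(-\triangle)^{\frac{s-2}{2}}$ up to constants. The remainder obeys
\[
\|\mathcal R\|_{L^p}\lesssim \|(-\triangle)^{\frac{s_1}{2}}f\|_{L^{p}}\|(-\triangle)^{\frac{s_2}{2}}g\|_{L^\infty}
\]
for any split $s_1+s_2=s$ with $0\le s_1,s_2\le1$ (at endpoints with suitable care). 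We will take $s_1=s-1$ and $s_2=1$, which gives the bound $\|(-\triangle)^{\frac{s-1}{2}}f\|_{L^p}\|(-\triangle)^{1/2}g\|_{L^\infty}$.

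The first-order terms then need to be absorbed. The term $\partial^\alpha g\,T_\alpha f$ satisfies
\[
\|\partial^\alpha g\, T_\alpha f\|_{L^p}\le \|Dg\|_{L^\infty}\|T_\alpha f\|_{L^p}\lesssim \|Dg\|_{L^\infty}\|(-\triangle)^{\frac{s-1}{2}}f\|_{L^p}.
\]
This uses that $T_\alpha(-\triangle)^{-\frac{s-1}{2}}$ is a zeroth-order multiplier, i.e.\ a Riesz transform, hence bounded on $L^p$.

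The other term, $\partial^\alpha f\,T_\alpha g$, has derivatives on $f$ of order $1$, which may exceed $s-1$, so it cannot be bounded in the same way. I expect this to be the main obstacle. To handle it, we will regroup it with the symmetric counterpart before estimating. Equivalently, we will redo the commutator decomposition asymmetrically, writing
\[
R_s(f,g)=[\flap,g]f-f\flap g .
\]
We then treat $[\flap,g]f-f\flap g$ as a Calder\'on commutator: write $\flap=(-\triangle)^{1/2}(-\triangle)^{\frac{s-1}{2}}$ and expand
\[
[\flap,g]=(-\triangle)^{1/2}\big[(-\triangle)^{\frac{s-1}{2}},g\big]+\big[(-\triangle)^{1/2},g\big](-\triangle)^{\frac{s-1}{2}}.
\]
The second piece is controlled by Calder\'on's first commutator theorem: $\|[(-\triangle)^{1/2},g]h\|_{L^p}\lesssim\|Dg\|_{L^\infty}\|h\|_{L^p}$, applied with $h=(-\triangle)^{\frac{s-1}{2}}f$. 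The first piece is handled by the $s-1\in(0,1)$ case of the Kenig--Ponce--Vega estimate, lifted by one derivative via the Leibniz rule. Its leftover terms involve $(-\triangle)^{1/2}g$ and $Dg$ in $L^\infty$, and each leftover term $f\flap g$ cancels against the subtracted term.

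If bookkeeping in this direct route proves messy, the fallback is to cite \cite[Theorem 1.2]{Li19} directly with the indicated choice of exponents. That result is exactly of the stated form once the first-order terms are bounded via Riesz transforms as above.
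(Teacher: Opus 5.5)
Your treatment of $0<s<1$ matches the paper: Li's theorem with $s_1=s$, $s_2=0$, so that the remainder is exactly $R_s(f,g)$. For $1\le s<2$ there is a gap. The argument rests on a misreading of Li's Theorem~1.2, and the detour built to compensate does not close.

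\textbf{The misreading of Li's theorem.} Li's expansion is asymmetric. For a split $s_1+s_2=s$ it removes $\frac{1}{\alpha!}\partial^\alpha f\,D^{s,\alpha}g$ only for $|\alpha|<s_1$, and $\frac{1}{\beta!}\partial^\beta g\,D^{s,\beta}f$ for $|\beta|\le s_2$. The remainder is then bounded by $\norm{(-\triangle)^{\frac{s_1}{2}}f}_{L^p}\norm{(-\triangle)^{\frac{s_2}{2}}g}_{BMO}$. Your symmetric version, with a remainder bound for every split, is false when $s_1=s-1<1$. Its remainder differs from Li's by $\sum_j\partial_jf\,D^{s,e_j}g$, which needs a full derivative of $f$. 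With the correct statement and $s_1=s-1$, $s_2=1$, the term $\partial^\alpha f\,T_\alpha g$ that you single out as the main obstacle never occurs. The only subtracted terms are:
\begin{itemize}
\item $g\flap f$;
\item $\partial_jg\,D^{s,e_j}f$ with $D^{s,e_j}=sR_j(-\triangle)^{\frac{s-1}{2}}$, which you already bound correctly through the Riesz transform;
\item $f\flap g$, but only for $s>1$. At $s=1$ the index set $|\alpha|<0$ is empty, so $f(-\triangle)^{1/2}g$ is not subtracted and is bounded trivially by $\norm{f}_{L^p}\norm{(-\triangle)^{1/2}g}_{L^\infty}$.
\end{itemize}
Finally, $BMO$ is dominated by $L^\infty$. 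That is precisely the paper's proof. So your fallback is the right argument, but only with Li quoted correctly; under your own reading it would retain the term you could not bound.

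\textbf{The Calder\'on-commutator detour.} This route has a real hole for $1<s<2$. The piece $[(-\triangle)^{1/2},g](-\triangle)^{\frac{s-1}{2}}f$ is indeed handled by Calder\'on's theorem. For the other piece, write $\sigma=s-1$ and let $R_\sigma(f,g)$ be the order-$\sigma$ remainder of the lemma. You then need to bound
\[
(-\triangle)^{\frac12}\big[(-\triangle)^{\frac{\sigma}{2}},g\big]f-f\flap g
=(-\triangle)^{\frac12}R_\sigma(f,g)+\Big((-\triangle)^{\frac12}(fh)-f\,(-\triangle)^{\frac12}h\Big),\qquad h:=(-\triangle)^{\frac{\sigma}{2}}g.
\]
\begin{itemize}
\item The Kenig--Ponce--Vega estimate controls $R_\sigma(f,g)$ in $L^p$, not $(-\triangle)^{1/2}R_\sigma(f,g)$; there is no ``lifting by one derivative.''
\item Likewise $(-\triangle)^{1/2}(fh)$ is not $f\flap g$, so nothing cancels termwise against the subtracted term.
\item In fact neither summand is bounded by $\norm{(-\triangle)^{\frac{\sigma}{2}}f}_{L^p}$ times norms of $g$. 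For $f=\phi(x)\cos(Nx_1)$, each contains to leading order $\pm h\,(-\triangle)^{1/2}f$, of size $N$, while $\norm{(-\triangle)^{\frac{\sigma}{2}}f}_{L^p}\sim N^{\sigma}$.
\end{itemize}
The two large contributions cancel only against each other. Exhibiting that cancellation requires a frequency (paraproduct) decomposition, which amounts to reproving Li's theorem. At $s=1$ this piece is absent, and your commutator argument does work there.
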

\begin{proof}
    \cite[Case 1 (2) in Theorem 1.2]{Li19} states that for any $s_1,s_2\ge 0$ with $s_1+s_2=s$,
    \begin{equation}
    \begin{split}
         \Norm{\flap (fg)-\sum _{|\alpha|<s_1}\frac{1}{\alpha !}\partial ^\alpha fD^{s,\alpha}g-\sum _{|\beta|\le s_2}\frac{1}{\beta!}\partial ^{\beta}gD^{s,\beta}f}_{L^p(\R^n)}\\
         \lesssim _{s,s_1,s_2,p,n}\norm{(-\triangle)^{s_1/2}f}_{L^p(\R^n)}\norm{(-\triangle)^{s_2/2}g}_{BMO},
    \end{split}
    \end{equation}
    where $\widehat{D^{s,\alpha}g}(\xi)= i^{-|\alpha|}\partial _{\xi}^\alpha(|\xi|^s)\widehat{g}(\xi)$. Noticing that $D^{s,0}=\flap$ and $D^{s,e_j}= sR_j(-\triangle)^{\frac{s-1}{2}}$, where $R_j$ denotes the Riesz
transform normalized by $\widehat{R_jh}(\xi)=i\,\xi_j|\xi|^{-1}\hat h(\xi)$
and is bounded on $L^p(\R^n)$.

Now the lemma is derived by taking $s_1=s,s_2=0$ when $s\in (0,1)$; $s_1=s-1, s_2=1$ when $s\in [1,2)$.
\end{proof}

We shall also use the following fractional Gagliardo--Nirenberg
inequalities, which are special cases of the weighted results of
\cite{DDS23}:

\begin{lemma}\label{moment_GN}
    Let $p\in(1,\infty)$ and $0<\alpha<\beta\le2$, with the convention
    $(-\triangle)^{\frac{2}{2}}:=-\triangle$. Then
    \begin{equation}\label{moment_ineq}
        \norm{(-\triangle)^{\frac{\alpha}{2}}f}_{L^p(\R^n)}
        \lesssim_{\alpha,\beta,p}
        \norm{f}_{L^p(\R^n)}^{1-\frac{\alpha}{\beta}}\,
        \norm{(-\triangle)^{\frac{\beta}{2}}f}_{L^p(\R^n)}^{\frac{\alpha}{\beta}},
        \qquad f\in\mathscr S(\R^n).
    \end{equation}
    In particular, for all $f\in\mathscr S(\R^n)$,
    \begin{align}
        \norm{(-\triangle)^{\frac{\sigma}{2}}f}_{L^p(\R^n)}
        &\lesssim\norm{f}_{L^p(\R^n)}^{1-\sigma}\,
          \norm{Df}_{L^p(\R^n)}^{\sigma},
        &&0<\sigma<1,\label{GN_first}\\
        \norm{\flap f}_{L^p(\R^n)}
        &\lesssim\norm{f}_{L^p(\R^n)}^{1-\frac{s}{2}}\,
          \norm{D^2f}_{L^p(\R^n)}^{\frac{s}{2}},
        &&0<s<2,\label{GN_second}\\
        \norm{Df}_{L^p(\R^n)}
        &\lesssim\norm{f}_{L^p(\R^n)}^{1-\frac{1}{s}}\,
          \norm{\flap f}_{L^p(\R^n)}^{\frac{1}{s}},
        &&1<s<2.\label{GN_grad}
    \end{align}
\end{lemma}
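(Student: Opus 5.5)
The plan is to prove the general moment inequality \eqref{moment_ineq} by a Littlewood--Paley/scaling argument, and then read off the three special cases.

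\textbf{Step 1: reduction to the non-scale-invariant bound.} It suffices to prove
\[
\norm{(-\triangle)^{\frac{\alpha}{2}}f}_{L^p}\lesssim \norm{f}_{L^p}+\norm{(-\triangle)^{\frac{\beta}{2}}f}_{L^p},\qquad f\in\mathscr S(\R^n).
\]
Indeed, apply it to $f_\lambda=f(\lambda\,\cdot)$. The three terms scale like $\lambda^{\alpha-n/p}$, $\lambda^{-n/p}$ and $\lambda^{\beta-n/p}$ respectively. After dividing by $\lambda^{\alpha-n/p}$ one gets
\[
\norm{(-\triangle)^{\frac{\alpha}{2}}f}_{L^p}\lesssim \lambda^{-\alpha}\norm{f}_{L^p}+\lambda^{\beta-\alpha}\norm{(-\triangle)^{\frac{\beta}{2}}f}_{L^p}.
\]
Optimizing in $\lambda$, namely $\lambda^{\beta}=\norm{f}_{L^p}/\norm{(-\triangle)^{\beta/2}f}_{L^p}$, gives the product form with exponents $1-\alpha/\beta$ and $\alpha/\beta$. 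The degenerate case $(-\triangle)^{\beta/2}f=0$ forces $f=0$ for a Schwartz function.

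\textbf{Step 2: proof of the additive bound.} Fix $\chi\in C_c^\infty$ with $\chi=1$ near $0$, and split
\[
|\xi|^\alpha=|\xi|^\alpha\chi(\xi)+\frac{|\xi|^\alpha(1-\chi(\xi))}{1+|\xi|^\beta}\,(1+|\xi|^\beta).
\]
\begin{itemize}
\item The first multiplier $|\xi|^\alpha\chi(\xi)$ has an integrable kernel, since its inverse Fourier transform decays like $|x|^{-n-\alpha}$ and is smooth. It is therefore bounded on $L^p$ and contributes $\lesssim\norm{f}_{L^p}$.
\item The second factor $m(\xi)=|\xi|^\alpha(1-\chi)/(1+|\xi|^\beta)$ is smooth and satisfies the Mikhlin condition because $\alpha<\beta$. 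Applied to $(1+|\xi|^\beta)\hat f$, it contributes $\lesssim\norm{f}_{L^p}+\norm{(-\triangle)^{\beta/2}f}_{L^p}$.
\end{itemize}
The endpoint convention $(-\triangle)^{2/2}=-\triangle$ is consistent with the symbol $|\xi|^2$, so no separate argument is needed. This is essentially the equivalence with Bessel potential norms \cite{Stein61} already quoted in Lemma~\ref{Gagliardo_vs_flap}; alternatively one may simply cite \cite{DDS23} with trivial weights.

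\textbf{Step 3: special cases.}
\begin{itemize}
\item \eqref{GN_first}: take $\alpha=\sigma$ and $\beta=1$, and use $\norm{(-\triangle)^{1/2}f}_{L^p}\simeq\norm{Df}_{L^p}$ by the $L^p$-boundedness of the Riesz transforms, since $(-\triangle)^{1/2}f=-\sum_j R_j\partial_jf$ and $\partial_j f=R_j(-\triangle)^{1/2}f$ up to sign.
\item \eqref{GN_second}: take $\alpha=s$ and $\beta=2$, and use $\norm{\triangle f}_{L^p}\le\sqrt n\,\norm{D^2f}_{L^p}$.
\item \eqref{GN_grad}: take $\alpha=1$ and $\beta=s\in(1,2)$, and bound $\norm{Df}_{L^p}\lesssim\norm{(-\triangle)^{1/2}f}_{L^p}$ again via Riesz transforms.
\end{itemize}

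The only genuinely delicate point is the low-frequency piece in Step 2. The symbol $|\xi|^\alpha$ is not smooth at the origin, so one must check that $|\xi|^\alpha\chi$ has an $L^1$ kernel. It is homogeneous of degree $\alpha>0$ near $0$, so its kernel is bounded and decays like $|x|^{-n-\alpha}$, which is integrable. Everything else is standard multiplier theory and scaling.
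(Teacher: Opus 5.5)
Your proof is correct, but it takes a different route from the paper. The paper proves \eqref{moment_ineq} in one line, by quoting the unweighted, equal-exponent case of \cite[Theorem 1.6]{DDS23}; this is the alternative you mention in passing. It then states \eqref{GN_first}--\eqref{GN_grad} without further comment.

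You instead give a self-contained argument in two parts:
\begin{itemize}
\item The same dilation trick the paper uses in Lemma~\ref{Gagliardo_vs_flap} reduces the multiplicative inequality to an additive, inhomogeneous one.
\item You prove the additive bound by splitting the symbol into two pieces. The low-frequency piece $|\xi|^\alpha\chi$ has an integrable kernel because $\alpha>0$; a dyadic decomposition over $j\le 0$ with kernel $L^1$-norms $\lesssim 2^{j\alpha}$ makes this fully rigorous. The high-frequency piece is $m(\xi)(1+|\xi|^\beta)$, where $m$ is a Mikhlin multiplier since $\alpha\le\beta$.
\end{itemize}

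Your approach avoids the weighted machinery of \cite{DDS23} entirely and needs only the Mikhlin theorem and scaling. It also makes explicit the Riesz-transform reductions behind the three special cases, which the paper leaves to the reader. The citation, in turn, is shorter and comes with weighted and mixed-exponent versions that are not needed at this point of the paper.
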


\begin{proof}
    Inequality \eqref{moment_ineq} is the unweighted, equal-exponent
    case of \cite[Theorem 1.6]{DDS23}.
\end{proof}

Then we have the density of $C_{\mathrm{c}}^\infty(\R^n)$:

\begin{lemma}\label{C_c_dense_in_Y_s_ge_1}
 $C_{\mathrm{c}}^\infty(\R^n)$ is dense in $Y$.
\end{lemma}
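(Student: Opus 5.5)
Since $Y$ is the closure of $\mathscr S(\R^n)$ in $\Lambda^{s,p}$, it suffices to approximate a fixed $u\in\mathscr S(\R^n)$ by $C_{\mathrm c}^\infty$ functions in the $\Lambda^{s,p}$ norm. The natural choice is a standard cutoff: fix $\eta\in C_{\mathrm c}^\infty(\R^n)$ with $\eta=1$ on $B_1$, $\operatorname{supp}\eta\subset B_2$, and set $\eta_R:=\eta(\cdot/R)$ and $u_R:=\eta_R u\in C_{\mathrm c}^\infty$. We show $\|u-u_R\|_{\Lambda^{s,p}}\to0$ as $R\to\infty$. The norm has two kinds of terms, the weighted lower-order terms and the top-order term $\|\flap(u-u_R)\|_{L^p}$, and we treat them separately.

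\emph{Lower-order terms.} For $|\lambda|\le m$, the Leibniz rule gives $D^\lambda\big((1-\eta_R)u\big)=\sum_{\mu\le\lambda}c_{\lambda\mu}D^{\lambda-\mu}(1-\eta_R)\,D^\mu u$. Each derivative falling on the cutoff contributes a factor $R^{-|\lambda-\mu|}$ and is supported in $\{|x|\ge R\}$. Moreover, the weights $\rho^{|\lambda|-s}$ and $\lg^{-1}$ are bounded by powers of $\rho$. Since $u$ is Schwartz, all of these terms are dominated by $\|\rho^{N}D^\mu u\|_{L^p(|x|\ge R)}\to0$, by dominated convergence.

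\emph{Top-order term.} We write
\[
\flap\big((1-\eta_R)u\big)=(1-\eta_R)\flap u-u\,\flap\eta_R-R_s(\eta_R,u),
\]
using $\flap(1-\eta_R)=-\flap\eta_R$. All functions involved are Schwartz except the constant $1$, so we apply the identity as $\flap u-\flap(\eta_Ru)$, which is legitimate. We then bound the three pieces.
\begin{enumerate}
\item[(a)] $\|(1-\eta_R)\flap u\|_{L^p}\to0$, since $\flap u\in L^p$.
\item[(b)] By scaling, $\flap\eta_R=R^{-s}(\flap\eta)(\cdot/R)$, so $\|\flap\eta_R\|_{L^\infty}\lesssim R^{-s}$. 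Hence $\|u\,\flap\eta_R\|_{L^p}\lesssim R^{-s}\|u\|_{L^p}\to0$.
\item[(c)] For the remainder we use Lemma~\ref{remainder_estimate} with $f=u$ and $g=\eta_R$. When $s\in(0,1)$ it gives only $\|\flap u\|_{L^p}\|\eta_R\|_{L^\infty}$, which does not decay, so we swap roles and take $f=\eta_R$, $g=u$. This gives $\|\flap\eta_R\|_{L^p}\|u\|_{L^\infty}\sim R^{-s+n/p}$, which decays only when $sp<n$.
\end{enumerate}

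Step (c) is the main obstacle. A more robust route avoids the global bound: Lemma~\ref{remainder_estimate} is a purely symbolic estimate, so we instead bound $R_s(\eta_R,u)$ directly from the bilinear integral representation
\[
R_s(f,g)(x)=-c_{n,s}\,\mathrm{p.v.}\!\int\frac{(f(x)-f(y))(g(x)-g(y))}{|x-y|^{n+s}}\,dy
\]
for $s<1$. For $1\le s<2$ we use the analogous second-difference representation, with a first-order correction. We split the integral into two regions.
\begin{itemize}
\item On $|x-y|<R$, we use $|\eta_R(x)-\eta_R(y)|\le R^{-1}|x-y|\|D\eta\|_\infty$. Together with the decay of $u$, this yields a contribution of order $R^{-1}$ times a fixed $L^p$ function built from $u$.
\item On $|x-y|\ge R$, we bound $|\eta_R(x)-\eta_R(y)|\le2$. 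Then $\int_{|h|\ge R}|u(x)-u(x+h)||h|^{-n-s}\,dh$ is at most $\big(R^{-s}|u(x)|+\|u\|_{L^1}R^{-n-s}\big)$ for $|x|\le R/2$, with rapid decay in $x$ beyond that range. Its $L^p$ norm is therefore $\lesssim R^{-s}+R^{-n-s+n/p}\to0$.
\end{itemize}

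For $s\in[1,2)$ we follow the same pattern via the case $s\in[1,2)$ of Lemma~\ref{remainder_estimate}, taking $f=\eta_R$ and $g=u$. The terms $\|(-\triangle)^{\frac{s-1}{2}}\eta_R\|_{L^p}\sim R^{1-s+n/p}$ may fail to decay, so we would instead carry out the near/far splitting with second-order Taylor bounds. The first-order terms $\partial_j\eta_R\,D^{s,e_j}u$ have size $\lesssim R^{-1}$ in $L^p$ because $D^{s,e_j}u\in L^p$. Having shown that all three pieces tend to zero, we conclude $\|\flap(u-u_R)\|_{L^p}\to0$, which completes the density proof.
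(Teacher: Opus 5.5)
Your proposal is correct in substance. The reduction to $u\in\mathscr S(\R^n)$ and the treatment of the weighted lower-order terms by dominated convergence match the paper; the difference is in the top-order term.

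The paper avoids commutators entirely. With $v_R=(1-\eta_R)u\in\mathscr S(\R^n)$, it applies the interpolation inequality \eqref{GN_second} of Lemma~\ref{moment_GN}, $\|\flap v_R\|_{L^p}\lesssim\|v_R\|_{L^p}^{1-\frac s2}\|D^2v_R\|_{L^p}^{\frac s2}$. Here $\|v_R\|_{L^p}\to0$ and $\|D^2v_R\|_{L^p}$ stays bounded, so all $s\in(0,2)$ are covered at once.

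Your Leibniz splitting with a direct kernel estimate of $R_s(\eta_R,u)$ uses only the singular-integral formula and gives explicit rates. It is longer, however, and two points need repair.

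First, for $s<1$ the near region is not $R^{-1}$ times a fixed $L^p$ function, since $\int_{1\le|h|<R}|h|^{1-n-s}\,dh\sim R^{1-s}$. If you bound the $|u(x+h)|$ part by Minkowski's integral inequality, you get $O(R^{-s})$, which still suffices.

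Second, your sketch for $s\in[1,2)$ rests on an unnecessary worry. The formula $R_s(f,g)(x)=-c_{n,s}\int(f(x)-f(y))(g(x)-g(y))|x-y|^{-n-s}\,dy$ converges absolutely for every $s<2$, because the product of two first differences is $O(|x-y|^2)$. So no second-difference representation or first-order correction is needed. The bound $|\eta_R(x)-\eta_R(x+h)|\le\min\{2,\,R^{-1}|h|\,\|D\eta\|_{L^\infty}\}$, combined with Minkowski's inequality, yields $\|R_s(\eta_R,u)\|_{L^p}\lesssim R^{-\min\{s,1\}}\ln R$ uniformly for $s\in(0,2)$.

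More simply still, for $s\in[1,2)$ you can apply Lemma~\ref{remainder_estimate} with the roles $(f,g)=(u,\eta_R)$ rather than $(\eta_R,u)$. Scaling gives $\|D\eta_R\|_{L^\infty}+\|(-\triangle)^{1/2}\eta_R\|_{L^\infty}\lesssim R^{-1}$, so the remainder is $O(R^{-1})$ directly.
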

\begin{proof}
    Let $u\in\mathscr{S}(\R^n)$. Choose a nonnegative cutoff $\eta\in C_{\mathrm{c}}^\infty(B_2)$ with $\eta=1$ on $B_1$ and set $\eta_R(x)=\eta(x/R)$. Denote $v_R=(1-\eta _R)u$. It is clear that $\|\rho^{-s}v_R\|_{L^p}$ and  $\|\rho^{1-s}Dv_R\|_{L^p}$ tend to 0 as $R\to 0$ by the dominated convergence theorem. For the highest order term, Lemma~\ref{moment_GN} implies:
    \[
    \|\flap v_R\|_{L^p}\lesssim \|v_R\|_{{L^p}}^{1-\frac{s}{2}}\|D^2v_R\|_{L^p}^{\frac{s}{2}}\to 0\text{ as }R\to\infty
    \]
    since $\|v_R\|_{{L^p}}\to 0$ as $R\to\infty$ and $\|D^2v_R\|_{L^p}$ are uniformly bounded. 
\end{proof}

 To transfer the Poincar\'e inequalities of Section~\ref{sec:Poincare} from $C_{\mathrm{c}}^\infty(\R^n\setminus\{0\})$ to $Y$, we need the following density result.

\begin{proposition}\label{density_away_from_0}
    Let $sp\ge n$ and $s-\frac{n}{p} \notin\{0,1\}$. The point evaluation $Y\ni u\mapsto D^\al u(0)$ is bounded for $|\al|<s-\frac{n}{p}$. Moreover, $C^\infty_\mathrm{c}(\R^n\setminus\{0\})$ is dense in
    \[
    Y_0:=\big\{u\in Y:D^\alpha u(0)=0,\;|\al|<s-\tfrac{n}{p}\big\}.
    \]
\end{proposition}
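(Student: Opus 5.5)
Since $sp\ge n$ and $s-\frac np\notin\{0,1\}$, the critical index is $k=-1$, so there are no logarithmic weights. The norm on $Y$ is therefore
\[
\sum_{|\lambda|\le m}\|\rho^{|\lambda|-s}D^\lambda u\|_{L^p}+\|\flap u\|_{L^p},\qquad m=[s].
\]
The plan has two steps: first bound point evaluations on the dense class $C_{\mathrm c}^\infty$ (Lemma~\ref{C_c_dense_in_Y_s_ge_1}) and extend by continuity; then approximate elements of $Y_0$ by functions vanishing near the origin, using a cutoff at scale $\varepsilon\to0$.

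\textbf{Step 1 (boundedness of $u\mapsto D^\alpha u(0)$, $|\alpha|<s-\frac np$).} Fix $\chi\in C_{\mathrm c}^\infty(B_2)$ with $\chi=1$ on $B_1$. For $u\in C_{\mathrm c}^\infty$, the function $\chi u$ lies in the Bessel potential space $H^{s,p}$. Indeed, $\|\chi u\|_{L^p}\lesssim\|\rho^{-s}u\|_{L^p}$, and by Lemma~\ref{remainder_estimate},
\[
\flap(\chi u)=\chi\flap u+u\flap\chi+R_s(\chi,u).
\]
The first term is controlled by the norm directly. For the second, $\flap\chi$ decays like $\rho^{-n-s}$, so $\|u\flap\chi\|_{L^p}\lesssim\|\rho^{-s}u\|_{L^p}$. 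The remainder $R_s$ is bounded using its kernel form localized to $B_4$: this costs terms $\|u\|_{L^p(B_4)}$ and $\|Du\|_{L^p(B_4)}$ when $s\ge1$, plus a tail that decays in $|x|$ and is again controlled by $\|\rho^{-s}u\|_{L^p}$. Then the Sobolev embedding $H^{s,p}\hookrightarrow C^{j,\gamma}$ for $j<s-\frac np$ gives
\[
|D^\alpha u(0)|\lesssim\|\chi u\|_{H^{s,p}}\lesssim\|u\|_{Y}.
\]
The exclusion $s-\frac np\notin\{0,1\}$ guarantees that the integer boundary case never appears.

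\textbf{Step 2 (density in $Y_0$).} For $u\in Y_0$, Step 1 lets us first approximate by $C_{\mathrm c}^\infty$ functions and then subtract $\sum_{|\alpha|<s-n/p}\frac{D^\alpha u_j(0)}{\alpha!}x^\alpha\chi$. This correction tends to $0$ in $Y$, so we may assume $u\in C_{\mathrm c}^\infty$ with $D^\alpha u(0)=0$ for $|\alpha|<s-\frac np=:\ell+\theta$. Set $\eta_\varepsilon=\eta(x/\varepsilon)$ as in Lemma~\ref{C_c_dense_in_Y_s_ge_1} and $w_\varepsilon=\eta_\varepsilon u$; it suffices to show $w_\varepsilon\to0$ in $Y$. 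Taylor's formula gives $|D^\beta u(x)|\lesssim|x|^{\ell+1-|\beta|}$ near $0$, where $\ell+1>s-\frac np$. The lower-order weighted terms are handled by dominated convergence. For the top-order term we apply Lemma~\ref{moment_GN}, or directly the scaling $w_\varepsilon(x)=\varepsilon^{\ell+1}W_\varepsilon(x/\varepsilon)$ with $W_\varepsilon$ bounded in $C^2_{\mathrm c}(B_2)$. This yields
\[
\|\flap w_\varepsilon\|_{L^p}\lesssim\varepsilon^{\ell+1-s+n/p}\to0,
\]
because $\ell+1>s-\frac np$.

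\textbf{Main obstacle.} The delicate step is the local $H^{s,p}$ estimate in Step 1, namely controlling $\flap(\chi u)$ by the global weighted norm. The nonlocal tail of $\flap$ couples the behaviour of $u$ near the origin with its behaviour far away, and only the weight $\rho^{-s}$ is available to absorb it. I would handle this with the explicit kernel of $R_s$, splitting into the regions $|y|<4$ and $|y|\ge4$, where the far part has kernel $\sim|y|^{-n-s}$.
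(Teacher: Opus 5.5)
Your proposal is correct and follows the paper's proof essentially step for step. The steps are the same: boundedness of $u\mapsto D^\alpha u(0)$ by an embedding argument, where you usefully spell out the localization $\chi u\in H^{s,p}$ via the commutator split that the paper compresses into ``Morrey's embedding''; then approximation of $u\in Y_0$ by $C_{\mathrm c}^\infty$ functions corrected by $\eta$ times their Taylor polynomials at $0$; and finally $\|\eta_\varepsilon u\|_{\Lambda^{s,p}}\to0$ at the same rate $\varepsilon^{[s-n/p]+1-s+n/p}$, obtained from \eqref{GN_second} or, equivalently, your scaling argument.

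One minor correction to your closing remark in Step 1. The embedding only needs the strict inequality $|\alpha|<s-\frac np$. The exclusion $s-\frac np\notin\{0,1\}$ is really used to rule out logarithmic weights and in Step 2, where it makes $|\alpha|<s-\frac np$ equivalent to $|\alpha|\le[s-\frac np]$ and hence gives the positive exponent.
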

\begin{proof}
    The boundedness of point evaluation follows from Morrey's embedding.

    \medskip\noindent\textit{Step 1.} We show $\{u\in C_{\mathrm{c}}^\infty(\R^n): D^\alpha u(0)=0,\;|\alpha|<s-n/p\}\hookrightarrow Y_0$ is dense. Let $\eta\in C_{\mathrm{c}}^\infty(B_2)$ with $\eta|_{B_1}=1$. For $u\in Y_0$, by Lemma~\ref{C_c_dense_in_Y_s_ge_1}, choose $\{u_k\}\subset C_{\mathrm{c}}^\infty(\R^n)$ with $\norm{u_k-u}_{\Lambda^{s,p}}\to0$. Morrey's embedding gives
    $$|D^\alpha(u_k-u)(0)|\lesssim\norm{u_k-u}_{\Lambda^{s,p}}\to0,\quad|\alpha|<s-\tfrac{n}{p},$$
    hence $\|P_{u_k,0}\eta\|_{\Lambda^{s,p}}\to0$, where $P_{u_k,0}(x):=\sum_{|\alpha|\leq s-n/p}\frac{D^\alpha u_k(0)}{\alpha!}x^\alpha$. Therefore $u_k-P_{u_k,0}\eta$ converges to $u$ in $\Lambda^{s,p}(\R^n)$ and satisfies $D^\alpha(\cdot)(0)=0$ for $|\alpha|<s-n/p$.

    \medskip\noindent\textit{Step 2.} We approximate any $u\in C_{\mathrm{c}}^\infty(\R^n)$ with $D^\alpha u(0)=0$ for $|\alpha|< s-n/p$ by $C_{\mathrm{c}}^\infty(\R^n\setminus\{0\})$ functions. Set $\eta_\epsilon(x)=\eta(x/\epsilon)$. We claim $\norm{\eta_\epsilon u}_{\Lambda^{s,p}}\to0$, which immediately implies $(1-\eta_\epsilon)u\to u$. The dominated convergence theorem yields
    $$\norm{\eta_\epsilon u\rho^{-s}}_{L^p}+\norm{D(\eta_\epsilon u)\rho^{1-s}}_{L^p}\to0.$$
    Using $\norm{u}_{L^\infty(B_\epsilon)}\lesssim\epsilon^{1+[s-n/p]}$ and $\norm{Du}_{L^\infty(B_\epsilon)}\lesssim\epsilon^{[s-n/p]}$, one derives
    \[
        \norm{\eta_\epsilon u}_{L^p}\lesssim\epsilon^{1+[s-n/p]+n/p},
    \]
    and
    \begin{align*}
        \norm{D^2(\eta_\epsilon u)}_{L^p}&\leq\|uD^2\eta_\epsilon\|_{L^p(B_\epsilon)}+2\|DuD\eta_\epsilon\|_{L^p}+\|\eta_\epsilon D^2u\|_{L^p}\lesssim\epsilon^{[s-n/p]+n/p-1}.
    \end{align*}
    Then \eqref{GN_second} implies $\norm{\flap(\eta_\epsilon u)}_{L^p}\to0$.
\end{proof}

Now we are ready to prove the following Poincar\'e inequality on $Y$. 

\begin{proposition}\label{Poincare_on_Y}
    Let $s\in(0,2)$, $sp\ge n$. Then
    $$\inf_{q\in\mathcal{P}_{[s-n/p]}}\|u-q\|_{\Lambda^{s,p}(\R^n)}\lesssim\|\flap u\|_{L^p(\R^n)},\quad u\in Y.$$
\end{proposition}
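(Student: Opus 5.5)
We prove the estimate separately in the non-critical case $s-\frac np\notin\{0,1\}$ and in the critical cases $s-\frac np\in\{0,1\}$. In both, the strategy is to bound each weighted lower-order term in the $\Lambda^{s,p}$-norm by $\|\flap u\|_{L^p}$, after subtracting a suitable polynomial. We do this first on a dense class and then pass to $Y$ by density. The top-order term needs no work: subtracting $q\in\mathcal P_{[s-n/p]}$ does not change $\flap u$, because $\flap q=0$ in the distributional sense for the admissible degrees.

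\emph{Non-critical case.} Here $k=-1$ and no logarithms appear. Let $u\in Y$ and let $q$ be its Taylor polynomial at $0$ of degree $<s-\frac np$, which is well defined by the bounded point evaluations of Proposition~\ref{density_away_from_0}. Cut $q$ off by $\eta$ and set $w=u-q\eta$. Then $w\in Y_0$ by the first part of that proposition. Moreover $q(1-\eta)$ is a polynomial outside $B_2$, so $u-q$ and $w$ differ by a compactly supported smooth function whose norm is controlled by $|D^\alpha u(0)|$.

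That control is not yet in terms of $\flap u$, so we argue on $Y_0$ directly. For $w\in Y_0$, approximate it by $w_j\in C_{\mathrm{c}}^\infty(\R^n\setminus\{0\})$ using Proposition~\ref{density_away_from_0}. Apply Corollary~\ref{Poincare_0<s<1}(i) when $s<1$, and Corollary~\ref{Poincare_1_s_2_noncritical} when $s\ge1$; for $s\ge1$ the weighted gradient term is bounded in the middle of that chain. This gives $\|\rho^{-s}w_j\|_{L^p}+\|\rho^{1-s}Dw_j\|_{L^p}\lesssim\|\flap w_j\|_{L^p}$, using $|x|\le\rho$. Passing to the limit, the estimate holds for all $w\in Y_0$.

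Then, for $u\in Y$, write $u-q=w+q(\eta-1)$. Since $\flap(q(\eta-1))=\flap(q\eta)$, we get $\|\flap w\|_{L^p}\le\|\flap u\|_{L^p}+C\sum|D^\alpha u(0)|$. This still carries an unwanted point-value term.

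The cleaner route, which we follow instead, is by contradiction and compactness. Suppose $u_j\in Y$ with $\inf_q\|u_j-q\|_{\Lambda^{s,p}}=1$ and $\|\flap u_j\|_{L^p}\to0$. Normalize $u_j\in Y_0$ by subtracting $q_j\eta$. The constraints $D^\alpha u_j(0)=0$ together with the uniform Hardy bound on $Y_0$ then give $\|u_j\|_{\Lambda^{s,p}}\lesssim\|\flap u_j\|_{L^p}\to0$, which is a contradiction.

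The honest issue is that $\flap(q\eta)\ne0$. To handle it, we show that $Y_0\oplus\mathcal P_{[s-n/p]}$ covers $Y$ modulo functions that are polynomial outside $B_2$. For this we would use the Hardy inequality on $w\in Y_0$ in the localized form: $\|\flap(u-q)\|$ is equal to $\|\flap u\|$. Here $u-q$ is approximated, outside a neighbourhood of $0$, by $C_{\mathrm{c}}^\infty(\R^n\setminus\{0\})$ functions modulo a local error. That error is handled by a compact Rellich-type argument on $B_2$: the weighted norms on bounded sets are compact relative to $\Lambda^{s,p}$ by the Sobolev embedding.

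So the concrete plan is a Peetre--Tartar argument. We establish the weak estimate
\[
\|u\|_{\Lambda^{s,p}}\lesssim\|\flap u\|_{L^p}+\|u\|_{W^{m,p}(B_2)},\qquad u\in Y,
\]
by splitting $u=\eta u+(1-\eta)u$. The piece $(1-\eta)u$ is supported away from $0$, so the Hardy corollaries apply to it, and the commutator $\flap((1-\eta)u)-(1-\eta)\flap u$ is controlled by Lemma~\ref{remainder_estimate} together with local norms. The piece $\eta u$ is bounded by the local norm. Since $W^{s,p}(B_2)\hookrightarrow W^{m,p}(B_2)$ compactly and the kernel of $\flap$ on $Y$ is $\mathcal P_{[s-n/p]}$ (entire $s$-harmonic functions are polynomials, and only those of degree $\le[s-\frac np]$ lie in $\Lambda^{s,p}$), the standard compactness argument upgrades the weak estimate to the stated one.

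\emph{Critical case.} When $s-\frac np\in\{0,1\}$ the same splitting works. We use Corollary~\ref{Poincare_0<s<1}(ii) and Corollary~\ref{Poincare_sp_n} on $B_1^c$, together with the log-weighted case of Corollary~\ref{Poincare_1_s_2_noncritical}, and these supply exactly the $\lg^{-1}$ weights for $|\lambda|\le k$.

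\emph{Main obstacle.} The hardest step is the commutator bound. Lemma~\ref{remainder_estimate} requires one factor to lie in $\mathscr S$ and leaves the nonlocal tail of $\flap u$ on the support of $\eta$, which must be controlled by local norms plus a small multiple of the weighted norms. We would try to bound $R_s(\eta,u)$ through $\|(-\triangle)^{\frac{s-1}{2}}u\|$ (or $\|u\|$) restricted to a large ball, plus weighted tails via the kernel decay $|x|^{-n-s}$.
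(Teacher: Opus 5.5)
Your final plan is viable: a Peetre--Tartar argument built from a localized weak estimate, the exterior Hardy inequalities, a commutator bound, local compactness, and $\ker(\flap|_Y)\subseteq\mathcal P_{[s-n/p]}$. In the critical case it is essentially the paper's proof. The paper normalizes by \eqref{zero_average}, assumes $\norm{U_k}_{\Lambda^{s,p}}=1$ and $\norm{\flap U_k}_{L^p}\to0$, and then kills $\phi U_k$ by compactness plus the commutator, and $\psi U_k$ by the exterior Hardy inequalities. That is your scheme, run inline rather than through a separately stated weak estimate.

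In the non-critical case, however, the obstruction you call ``the honest issue'' is self-inflicted. By Lemma~\ref{poly_in_Y}, whose proof does not use this proposition, $\mathcal P_{[s-n/p]}\subset Y$. So you may subtract the Taylor polynomial $q$ itself rather than $q\eta$. Then $u-q\in Y_0$ and $\flap(u-q)=\flap u$, and the Hardy corollaries, extended to $Y_0$ by Proposition~\ref{density_away_from_0}, give $\norm{u-q}_{\Lambda^{s,p}}\lesssim\norm{\flap u}_{L^p}$ at once. That is the paper's argument: it takes one line and produces an explicit $q$. Your uniform route avoids point evaluations, but it imports commutator and compactness machinery where none is needed. (Incidentally, $u-q$ and $u-q\eta$ differ by $q(\eta-1)$, which is not compactly supported.)

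As written, though, your weak estimate contains a false step and leaves the real work undone.

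\emph{The commutator for $1\le s<2$.} Lemma~\ref{remainder_estimate} applied to $(u,\eta)$ involves $\norm{(-\triangle)^{\frac{s-1}{2}}u}_{L^p}$, which is not available for general $u\in Y$. For example, $x_j\in Y$ when $s-n/p\ge1$, yet $x_j\notin\cL_{s-1}$. The paper resolves this as your last paragraph anticipates. It writes $u=\tilde\eta u+w$ with $\tilde\eta=1$ on $B_4\supset\operatorname{supp}\eta$. It applies the lemma, together with \eqref{GN_first}, only to the compactly supported $\tilde\eta u$. It bounds $\eta\,\flap w$ and $w\,\flap\eta$ through the explicit kernel, using that $w$ vanishes on $B_4$. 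For $s<1$ the lemma applies directly, with bound $\norm{\flap u}_{L^p}$.

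\emph{The local norm.} These nonlocal terms, and $u\,\flap\eta$, force the local norm to be taken on a large ball $B_\tau$, with a tail $\le\varepsilon(\tau)\norm{u}_{\Lambda^{s,p}}$ absorbed. So $B_2$ does not suffice directly. Likewise, $\eta u$ is not bounded by a local norm alone, since $\flap(\eta u)$ needs $\norm{\flap u}_{L^p}$ plus the commutator.

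\emph{The case $s=1$.} Here $m=1$, and $W^{1,p}(B_2)\hookrightarrow W^{1,p}(B_2)$ is not compact, so your upgrade step fails as stated. It is repaired by keeping only $\norm{u}_{L^p(B_\tau)}$. This suffices because at $s=1$ the bound of Lemma~\ref{remainder_estimate} costs only $\norm{\tilde\eta u}_{L^p}$, and $\norm{D(\eta u)}_{L^p}\lesssim\norm{(-\triangle)^{1/2}(\eta u)}_{L^p}$ by the Riesz transforms.

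\emph{Approximation of the exterior piece.} The same commutator bound is what justifies approximating $(1-\eta)u$ in $\Lambda^{s,p}$ by smooth functions supported away from the origin. You need this before applying the Hardy corollaries to it.
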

\begin{proof}
    We divide the proof according to whether $s-n/p$ is a critical value.

    \medskip\noindent\textbf{Non-critical case}: $s-\frac{n}{p}\notin\{0,1\}$. For $u\in Y$, let $P_{u,0}$ be the $[s-n/p]$-th Taylor expansion of $u$ at $0$, so that $D^\alpha(u-P_{u,0})(0)=0$ for $|\alpha|<s-n/p$, i.e. $u-P_{u,0}\in Y_0$. By Proposition~\ref{density_away_from_0}, $C_{\mathrm{c}}^\infty(\R^n\setminus\{0\})$ is dense in $Y_0$. Combining with Corollary~\ref{Poincare_0<s<1} (for $s<1$) or Corollary~\ref{Poincare_1_s_2_noncritical} (for $s\ge 1$), the Poincar\'e inequality extends from $C_{\mathrm{c}}^\infty(\R^n\setminus\{0\})$ to $Y_0$:
    $$\norm{u-P_{u,0}}_{\Lambda^{s,p}(\R^n)}\lesssim\|\flap u\|_{L^p(\R^n)}.$$

    \medskip\noindent\textbf{Critical case}: $s-\frac{n}{p}\in\{0,1\}$.
    For $u\in Y$, we choose the unique $U\in u+\mathcal{P}_{[s-n/p]}$ such that:
    \begin{equation}\label{zero_average}
    \begin{cases}
        \displaystyle\int_{B_1}U=0,&\text{if }~s-\dfrac{n}{p}=0;\\
        \displaystyle U(0)=0,\;\int_{B_1}DU=0,&\text{if }~s-\frac{n}{p}=1.
    \end{cases}
    \end{equation}
    We claim $\|U\|_{\Lambda^{s,p}}\lesssim\|\flap U\|_{L^p}$. Otherwise, there exists $\{U_k\}\subset Y$ satisfying \eqref{zero_average} with $\norm{U_k}_{\Lambda^{s,p}}=1$ and $\norm{\flap U_k}_{L^p}\to0$. Since $\Lambda^{s,p}$ is reflexive, up to a subsequence, $U_k\rightharpoonup U_*$ in $\Lambda^{s,p}$. Then $\|\flap U_*\|_{L^p}\leq\liminf_{k\to\infty}\|\flap U_k\|_{L^p}=0$, so $U_*\in\mathcal{P}_{[s-n/p]}$. By weak convergence, $U_*$ satisfies \eqref{zero_average}, hence $U_*=0$.

    Let $\phi,\psi$ be a partition of unity of $\R^n$ subordinate to $\{B_2,\,\R^n\setminus\bar B_1\}$. We shall prove that $\|\phi U_k\|_{\Lambda^{s,p}}$ and $\|\psi U_k\|_{\Lambda^{s,p}}$ both tends to 0 as $k\to\infty$, which contradicts to $\norm{U_k}_{\Lambda^{s,p}}=1$ for all $k$.

    \medskip\noindent\textit{Step 1: $|\phi U_k|_{\Lambda^{s,p}}\to 0$.}
    In $\mathscr D'(\R^n)$, write
    \begin{equation*}
        \flap(\phi U_k)=\phi\,\flap U_k+U_k\,\flap\phi+R_k ,
    \end{equation*}
    and show that each term on the right-hand side tends to $0$ in
    $L^p(\R^n)$.
    For the first term, $\|\phi\flap U_k\|_{L^p}\leq\|\phi\|_{L^\infty}\|\flap U_k\|_{L^p}\to 0$. For the second, since $|\flap\phi(x)|\leq C\rho(x)^{-n-s}$ and $\sup_k\|U_k\rho^{-s}\lg^{-1}\|_{L^p}<\infty$, for any $\varepsilon>0$ we may choose $\tau$ large so that
    $$\sup_k\left\|\frac{U_k}{\rho^{n+s}}\right\|_{L^p(B_\tau^\mathrm{c})}<\varepsilon.$$
    Moreover, by the compact embedding $\Lambda^{s,p}(\R^n)\hookrightarrow\hookrightarrow L^p(B_\tau)$ (which follows from $\Lambda^{s,p}\subset W^{s,p}(B_\tau)$ and the Rellich theorem \cite{DPV12}),
    $$\|U_k\flap\phi\|_{L^p(B_\tau)}\leq C\|U_k\|_{L^p(B_\tau)}\to0, \quad\text{as }k\to\infty.$$
    Hence $\|U_k\flap\phi\|_{L^p}\to 0.$

    For the remainder
    $R_k:=\flap(\phi U_k)-\phi\flap U_k-U_k\flap\phi\in\mathscr D'(\R^n)$
    we distinguish two cases. All estimates below are first established
    for $u\in\mathscr S(\R^n)$ and extend to $U_k\in Y$ by density.
    
    If $s<1$, Lemma~\ref{remainder_estimate} applied to the pair
    $(U_k,\phi)$ gives
    \[\norm{R_k}_{L^p}\le c\,\norm{\phi}_{L^\infty}
    \norm{\flap U_k}_{L^p}\to0.\]

    If $1\le s<2$, fix $\eta\in C_{\mathrm{c}}^\infty(B_8)$ with $0\le\eta\le1$ and
    $\eta=1$ on $B_4$, and split $U_k=\eta U_k+w_k$. Since
    $\phi\,w_k\equiv0$,
    \begin{equation}\label{Rk_split}
        \begin{aligned}
          R_k=&\Big(\flap(\phi\,\eta U_k)-\phi\,\flap(\eta U_k)
          -(\eta U_k)\,\flap\phi\Big)\\
          &\;-\;\phi\,\flap w_k\;-\;w_k\,\flap\phi.
        \end{aligned}
    \end{equation}
    For the bracket, note $\eta U_k\in W^{1,p}(\R^n)$ with
    $\norm{D(\eta U_k)}_{L^p}\le C\norm{U_k}_{\Lambda^{s,p}}=C$, the
    weights of Definition~\ref{fS} being comparable to constants on
    $B_8$; hence Lemma~\ref{remainder_estimate}, combined with \eqref{GN_first} at
    the order $s-1$ when $1<s<2$, yields
    \[
      \Norm{\flap(\phi\,\eta U_k)-\phi\,\flap(\eta U_k)
      -(\eta U_k)\,\flap\phi}_{L^p}
      \lesssim_{\phi}\norm{\eta U_k}_{L^p}^{2-s}\,\norm{D(\eta U_k)}_{L^p}^{s-1},
    \]
    and the bound tends to $0$, since
    $\norm{\eta U_k}_{L^p}\le\norm{U_k}_{L^p(B_8)}\to0$ by the compact
    embedding used above. For the second term in \eqref{Rk_split}: $w_k$
    vanishes on $B_4$, so for
    $x\in\operatorname{supp}\phi\subset\bar B_2$ the distribution
    $\flap w_k$ is given by the absolutely convergent integral
    $-c_{n,s}\int_{B_4^{\,c}}w_k(y)|x-y|^{-n-s}\,dy$, where
    $|x-y|\ge|y|/2$; hence, for every $\tau>4$,
    \[
      \norm{\phi\,\flap w_k}_{L^p}
      \lesssim_{\phi}\int_{B_4^{\,c}}
      \frac{|U_k(y)|}{\rho(y)^{n+s}}\,dy
      \le C_\tau\norm{U_k}_{L^p(B_\tau)}
      +\Norm{\frac{U_k}{\rho^{s}\lg}}_{L^p(\R^n)}
       \Norm{\frac{\lg}{\rho^{n}}}_{L^{p'}(B_\tau^{\,c})} ,
    \]
    sending $k\to\infty$ (compact embedding) then
    $\tau\to\infty$ shows $\norm{\phi\,\flap w_k}_{L^p}\to0$. Finally,
    $\norm{w_k\,\flap\phi}_{L^p}
    \le\norm{U_k\flap\phi}_{L^p}
    +\norm{\flap\phi}_{L^\infty}\norm{U_k}_{L^p(B_8)}\to0$, the first
    term by the preceding paragraph. Hence $\norm{R_k}_{L^p}\to0$ in all
    cases, and therefore $|\phi U_k|_{\Lambda ^{s,p}}\to0$.

    \medskip\noindent\textit{Step 2: $\|\phi U_k\|_{\Lambda^{s,p}}\to 0$.}
    We upgrade this to convergence of step 1 in the full norm. Since
    $\operatorname{supp}(\phi U_k)\subset\bar B_2$, where the weights of
    Definition~\ref{fS} are comparable to constants,
    \[
      \norm{\phi U_k}_{\Lambda^{s,p}}\lesssim
      \norm{\phi U_k}_{L^p}+\norm{\flap(\phi U_k)}_{L^p},
    \]
    with the additional term $\norm{D(\phi U_k)}_{L^p}$ on the right when
    $1\le s<2$. The first term is at most
    $\norm{\phi}_{L^\infty}\norm{U_k}_{L^p(B_2)}\to0$ by the compact
    embedding, and the second tends to $0$ as just shown. The gradient term tends to $0$ due to the interpolation \eqref{GN_grad} and density.

    \medskip\noindent\textit{Step 3: $\|\psi U_k\|_{\Lambda^{s,p}}\to 0$.} Since $\psi$ is supported in $\R^n\setminus B_1$ and $U_k\in Y$, the function $\psi U_k$ can be approximated in $\Lambda^{s,p}$ by $C_{\mathrm{c}}^\infty$ functions supported in $B_1^c$. The critical Hardy inequality \eqref{critical_Hardy_inequality} (for $s<1$), Corollary~\ref{Poincare_sp_n} (for $s\ge 1, sp=n$) or the last case of Corollary~\ref{Poincare_1_s_2_noncritical} (for $s\ge1, (s-1)p=n$) then gives
    \begin{align*}
        \norm{\psi U_k}_{\Lambda^{s,p}}\lesssim|\psi U_k|_{\Lambda^{s,p}}\leq|U_k|_{\Lambda^{s,p}}+|\phi U_k|_{\Lambda^{s,p}}\to 0.
    \end{align*}

    Hence $U_k\to 0$ strongly in $\Lambda^{s,p}$, contradicting $\norm{U_k}_{\Lambda^{s,p}}=1$.
\end{proof}

\subsection{Isomorphism on $Y$}

We first recall the kernel description:
\begin{theorem}[Fall \cite{Fall16}]\label{Fall}
   Suppose $s\in (0,2)$, every $s$-harmonic function ($\flap u = 0$ in $\R^n$ in the sense of distribution) in $\R^n$ is affine, and constant if $s\in (0,1]$.
\end{theorem}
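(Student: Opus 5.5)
The plan is to show that the Fourier transform of $u$ is supported at the origin, so that $u$ is a polynomial, and then to let the growth condition defining $\cL_s$ bound its degree. Let $u\in\cL_s$ satisfy $\langle u,\flap\phi\rangle=0$ for all $\phi\in C_{\mathrm{c}}^\infty(\R^n)$. First, $u$ is a tempered distribution: since $\int|u|\rho^{-n-s}<\infty$, the pairing $\psi\mapsto\int u\psi$ is bounded by $\sup|\rho^{n+s}\psi|$, which is a Schwartz seminorm.

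The key step is to extend the vanishing identity from $C_{\mathrm{c}}^\infty$ test functions to Schwartz test functions, i.e.
\[
\int_{\R^n}u\,\flap\psi=0\qquad\text{for all }\psi\in\mathscr S(\R^n).
\]
I would prove two facts. First, for $\psi\in\mathscr S$ one has the decay bound $|\flap\psi(x)|\lesssim\rho(x)^{-n-s}$, with a constant controlled by finitely many Schwartz seminorms of $\psi$. This follows from the singular-integral formula: split the integration into $|y-x|<\rho(x)/2$, where a second-order Taylor expansion and the decay of $D^2\psi$ near $x$ are used, and the complement, where the kernel is $\lesssim\rho(x)^{-n-s}$ and $\psi,\ |\psi(x)|$ are integrable or small. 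Second, with $\eta_R=\eta(\cdot/R)$, the truncations $\psi_R=\eta_R\psi\in C_{\mathrm{c}}^\infty$ satisfy $\psi_R\to\psi$ in every Schwartz seminorm. Applying the first fact to $\psi-\psi_R$ then gives $\sup_x\rho^{n+s}|\flap(\psi-\psi_R)|\to0$, and hence $\int u\,\flap\psi_R\to\int u\,\flap\psi$ because $u\rho^{-n-s}\in L^1$. This uniform weighted decay estimate is the step I expect to be the main technical obstacle. Everything else is soft.

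Next, fix $\chi\in C_{\mathrm{c}}^\infty(\R^n\setminus\{0\})$ and define $\psi$ by $\widehat\psi=|\xi|^{-s}\chi$. Then $\psi\in\mathscr S$, because $|\xi|^{-s}$ is smooth on $\operatorname{supp}\chi$. Moreover $\widehat{\flap\psi}=\chi$, so $\flap\psi=\check\chi$. The identity from the previous step gives $\langle u,\check\chi\rangle=0$, that is, $\langle\widehat u,\chi\rangle=0$ up to reflection and normalization. Since $\chi$ was an arbitrary test function supported away from the origin, $\operatorname{supp}\widehat u\subset\{0\}$. Hence $\widehat u$ is a finite linear combination of derivatives of $\delta_0$, and $u$ is a polynomial.

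Finally, consider the degree. If $u$ is a polynomial of degree $d$, its top-degree homogeneous part is nonzero on an open cone. On that cone $|u(x)|\gtrsim|x|^d$ for large $|x|$, so $\int|u|(1+|x|^{n+s})^{-1}\,dx<\infty$ forces $d-n-s<-n$, i.e.\ $d<s$. For $s\in(0,1]$ this gives $d=0$, so $u$ is constant; for $s\in(1,2)$ it gives $d\le1$, so $u$ is affine. Conversely, constants (and affine functions when $s>1$) do lie in $\cL_s$ and are $s$-harmonic, although the statement only requires the forward direction.
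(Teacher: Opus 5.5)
Your proof is correct. Although the paper only quotes this result from Fall, your three steps are exactly the argument it runs internally: the decay $|\flap\psi|\lesssim\rho^{-n-s}$ and the extension of the pairing to Schwartz test functions (Lemmas~\ref{schwartz_decay} and~\ref{flap_tempered}), then the vanishing of $u$ on the Lizorkin space $\mathscr S_0$ via inverse Fourier transforms of $C_{\mathrm{c}}^\infty(\R^n\setminus\{0\})$ functions, which forces $u$ to be a polynomial, and finally the growth count in Proposition~\ref{kernel}, whose description of $\mathcal H_s$ is this theorem; your cone argument for $d<s$ makes precise the paper's brief monomial-by-monomial remark there.
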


\begin{lemma}\label{poly_in_Y}
    $\mathcal{P}_{[s-n/p]}\subset Y$.
\end{lemma}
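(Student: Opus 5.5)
We need to show that every polynomial $q$ of degree at most $[s-n/p]$ lies in $Y$. If $sp<n$ the space is $\{0\}$ and there is nothing to prove, so assume $sp\ge n$. Then $[s-n/p]\in\{0,1\}$, and $q$ is affine. We approximate $q$ by truncations $q_R:=\eta_R q\in C_{\mathrm c}^\infty(\R^n)\subset\mathscr S(\R^n)$, where $\eta_R(x)=\eta(x/R)$ and $\eta\in C_{\mathrm c}^\infty(B_2)$ with $\eta=1$ on $B_1$. The goal is $\|q-q_R\|_{\Lambda^{s,p}}\to0$ as $R\to\infty$. It is cleaner to show that $(1-\eta_R)q\to0$ in each component of the norm.

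\textbf{Lower-order weighted terms.} First we check that $q\in\Lambda^{s,p}$. For $|\lambda|\le m$ we have $|D^\lambda q|\lesssim\rho^{[s-n/p]-|\lambda|}$, so the weighted term is bounded by $\rho^{[s-n/p]-s}$, possibly times $\lg^{-1}$. This is in $L^p$ precisely when $([s-n/p]-s)p<-n$, that is, when $[s-n/p]<s-n/p$.

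In the critical case $[s-n/p]=s-n/p=k$, the relevant components for $|\lambda|\le k$ carry $\lg^{-1}$. The integrand is then $\rho^{-n}\lg^{-p}$, which is integrable since $p>1$. For $|\lambda|>k$ the derivative of an affine $q$ of degree $k$ vanishes, or else $D^\lambda q$ is constant with strictly better decay; one should check that exponents $|\lambda|-s+k-|\lambda|=k-s=-n/p$ arise only for $|\lambda|\le k$, where the log weight is present.

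Given this, dominated convergence gives $\|\rho^{|\lambda|-s}(\lg^{-1})D^\lambda((1-\eta_R)q)\|_{L^p}\to0$. The terms in which derivatives fall on $\eta_R$ are also harmless. They are supported in $R\le|x|\le2R$ and are of size $R^{-j}|D^{\lambda-j}q|\lesssim\rho^{-|\lambda|}\rho^{[s-n/p]}$ there. Hence they are dominated by the same integrable function restricted to the annulus.

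\textbf{Top-order term.} This is the main step. Since $\flap q=0$ distributionally (Theorem~\ref{Fall}, or directly because $\flap$ annihilates affine functions for $s\in(0,2)$ against test functions), we get $\flap(q-q_R)=-\flap q_R$ as distributions. It therefore suffices to show $\|\flap(\eta_R q)\|_{L^p}\to0$.

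We use scaling. Write $q(x)=a+b\cdot x$, so that $\eta_R q(x)=a\,\eta(x/R)+R\,(b\cdot\tfrac{x}{R})\eta(x/R)$. For a fixed Schwartz $\theta$,
\[
\|\flap\theta(\cdot/R)\|_{L^p}=R^{n/p-s}\|\flap\theta\|_{L^p}.
\]
Hence
\[
\|\flap(\eta_Rq)\|_{L^p}\lesssim |a|R^{n/p-s}+|b|R^{1+n/p-s}.
\]
If $[s-n/p]=0$ then $b=0$ and $n/p-s<0$, except in the critical case $s=n/p$. If $[s-n/p]=1$ then $1+n/p-s<0$, except in the critical case $s-n/p=1$.

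\textbf{Critical cases.} Here plain scaling only gives boundedness, and this is the expected obstacle. We replace the cutoff $\eta(x/R)$ by a logarithmic cutoff, for instance $\eta_R(x)=\chi\big(\ln\ln|x|/\ln\ln R\big)$-type or $\chi(\ln|x|/\ln R)$. Its derivatives satisfy $|D^j\eta_R|\lesssim(\ln R)^{-1}|x|^{-j}$ on $R\le|x|\le R^2$.

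We then bound $\flap(\eta_Rq)$ through the Gagliardo–Nirenberg inequality \eqref{GN_second}, applied dyadically, or more simply by the pointwise estimate $|\flap(\eta_R q)|\lesssim(\ln R)^{-1}|x|^{k-s}$ on the transition region plus tails. This gives $\|\cdot\|_{L^p}^p\lesssim(\ln R)^{-p}\int_{R}^{R^2}r^{-1}\,dr=(\ln R)^{1-p}\to0$.

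The lower-order terms for the log cutoff are handled as before by dominated convergence, with the log weights absorbing the $\ln R$ factors. This yields $q\in\overline{\mathscr S}=Y$.
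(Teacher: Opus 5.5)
\textbf{Verdict.} Your proposal is correct in outline and takes a genuinely different route from the paper in the critical cases. The key nonlocal estimate it relies on is true, but you only assert it, and the two ways you suggest for proving it are incomplete as stated.

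\textbf{Comparison with the paper.} Your noncritical argument matches the paper's: ordinary cutoff, dominated convergence for the weighted lower-order terms, and the scaling bound $\|\flap(\eta_Rq)\|_{L^p}\lesssim|a|R^{n/p-s}+|b|R^{1+n/p-s}$. The routes diverge when $s-n/p\in\{0,1\}$. The paper keeps the ordinary cutoff, and scaling then shows only that $\{\eta_Rq\}_{R>1}$ is bounded in $\Lambda^{s,p}$. Reflexivity gives a weakly convergent subsequence. Its limit is identified as $q$ by testing against $\varphi\in C_{\mathrm c}^\infty(\R^n)$, which is a continuous functional on $\Lambda^{s,p}$ because of the weight $\rho^{-s}\lg^{-1}$. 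Mazur's theorem then gives strong convergence of convex combinations of the $\eta_{R_k}q$. This argument is soft and uses nothing about $\flap(\eta_Rq)$ beyond its exact scaling. Your logarithmic cutoff $\eta_R=\chi(\ln|x|/\ln R)$ (the choice matching your derivative bounds) is the classical capacity trick. It buys an explicit strongly convergent sequence with a logarithmic rate, and your lower-order bookkeeping for it is fine. The price is replacing the functional-analytic step by a nonlocal pointwise estimate, and that is where all the work lies.

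\textbf{What the nonlocal estimate needs.} First, the bounds $|D^j\eta_R|\lesssim(\ln R)^{-1}|x|^{-j}$ control only the part $|z|<|x|/2$ of the second-difference integral for $\flap(\eta_Rq)(x)$. For $|z|>|x|/2$ you need the log-Lipschitz bound
$|\eta_R(x)-\eta_R(y)|\lesssim\big|\ln\max(|x|,R)-\ln\max(|y|,R)\big|/\ln R$.
Split this region according to $|y|\sim|x|$, $|y|\gg|x|$ and $|y|\ll|x|$, where $y=x\pm z$. The last region is harmless because $\int_{|y|<|x|/4}\ln(|x|/|y|)\,dy\lesssim|x|^n$. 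For $q=x_j$, write the second difference of $u=\eta_Rx_j$ as
$x_j\big(\eta_R(x+z)+\eta_R(x-z)-2\eta_R(x)\big)+z_j\big(\eta_R(x+z)-\eta_R(x-z)\big)$,
and use $s>1$ so that $|z|^{1-n-s}$ is integrable at infinity.

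Second, the tails are not free. On $|x|\le R/2$ and $|x|\ge2R^2$, crude bounds using only $0\le\eta_R\le1$ contribute order $R^{\,n-(s-k)p}=1$ to $\|\flap(\eta_Rq)\|_{L^p}^p$. That is bounded but not small, which is exactly the obstruction you set out to avoid. You must again use $1-\eta_R(y)\lesssim\ln(|y|/R)/\ln R$ near the inner edge and $\eta_R(y)\lesssim\ln(R^2/|y|)/\ln R$ near the outer edge. These supply the missing factor $(\ln R)^{-p}$.

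Third, drop the alternative via \eqref{GN_second} ``applied dyadically''. $\flap$ does not localize to annuli, and the global inequality applied to $\eta_Rq$ gives a bound that grows polynomially in $R$.
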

\begin{proof}
It suffices to consider $sp\ge n$.
    Choose $\eta\in C_{\mathrm{c}}^\infty(B_2)$ with $\eta=1$ on $B_1$ and set $\eta_R(x)=\eta(x/R)$. A direct computation yields
    $$\Big\|\frac{\eta_R-1}{\rho^s}\Big\|_{L^p}\to0~~(sp>n),\qquad\Big\|\frac{\eta_R-1}{\rho^s\lg}\Big\|_{L^p}\to0~~(sp=n),$$
    $$\Big\|\frac{D\eta_R}{\rho^{s-1}}\Big\|_{L^p}\lesssim R^{n/p-s},\qquad\|\flap\eta_R-\flap1\|_{L^p}=R^{n/p-s}\|\flap\eta\|_{L^p}.$$
    If $sp>n$, then $\|\eta_R-1\|_{\Lambda^{s,p}}\to0$, so $1\in Y$.

    If $sp=n$, the above shows that $\{\eta_R\}_{R>1}$ is bounded in
    $\Lambda^{s,p}$. Since $\Lambda^{s,p}$ is reflexive, there is a
    subsequence $\{\eta_{R_k}\}$, $R_k\to\infty$, converging weakly to some
    $\tilde u\in\Lambda^{s,p}$. For every
    $\varphi\in C_{\mathrm{c}}^\infty(\R^n)$, the functional
    $u\mapsto\int_{\R^n}u\varphi$ is continuous on $\Lambda^{s,p}$, since
    $\big|\int u\varphi\big|
    \le\norm{u\rho^{-s}\lg^{-1}}_{L^p}\norm{\varphi\,\rho^{s}\lg}_{L^{p'}}$
    and the second factor is finite by the compactness of
    $\mathrm{supp}\,\varphi$. As $\eta_{R_k}=1$ on $\mathrm{supp}\,\varphi$
    for all large $k$, we obtain $\int\tilde u\varphi=\int\varphi$, hence
    $\tilde u=1$ a.e. Mazur's theorem then yields convex combinations of the
    $\eta_{R_k}\in C_{\mathrm{c}}^\infty(\R^n)$ converging strongly to $1$,
    so $1\in Y$.

    Finally, let $s-\frac np\ge1$, so that $s>1$ and
    $\mathcal P_{[s-n/p]}=\mathcal P_1$; since $Y$ is a linear subspace and
    $1\in Y$, it suffices to show $x_j\in Y$ for each $j$. Note first that
    $x_j\in\cL_s$ (as $s>1$) and $\flap x_j=0$, the symmetric difference of
    an affine function vanishing identically. Writing
    $\eta_R(x)x_j=R\,w(x/R)$ with $w(x):=\eta(x)x_j\in
    C_{\mathrm{c}}^\infty(\R^n)$, the scaling of $\flap$ gives
    \[
      \norm{\flap(\eta_Rx_j-x_j)}_{L^p}
      =\norm{\flap(\eta_Rx_j)}_{L^p}
      =R^{\,1-s+\frac np}\norm{\flap w}_{L^p}.
    \]
    For the lower-order terms, $|(\eta_R-1)x_j|\le\rho$ and
    $|D(\eta_Rx_j)-e_j|=|(\eta_R-1)e_j+x_jD\eta_R|\lesssim1$, both supported
    in $\{|x|\ge R\}$. If $s-\frac np>1$, then $(s-1)p>n$ and dominated
    convergence yields
    \[
      \Norm{\frac{(\eta_R-1)x_j}{\rho^{s}}}_{L^p}
      +\Norm{\frac{D(\eta_Rx_j)-e_j}{\rho^{s-1}}}_{L^p}\longrightarrow0;
    \]
    since moreover $1-s+\frac np<0$, we conclude
    $\norm{\eta_Rx_j-x_j}_{\Lambda^{s,p}}\to0$, so $x_j\in Y$. If
    $s-\frac np=1$, then $(s-1)p=n$ and Definition~\ref{fS} places the
    weights $\rho^{-s}\lg^{-1}$ and $\rho^{1-s}\lg^{-1}$ on the orders $0$
    and $1$; as $\rho^{-n}\lg^{-p}\in L^1(\R^n)$ (the same integrability
    showing that $x_j$ itself satisfies these weighted conditions),
    dominated convergence still gives the convergence of both lower-order
    terms, while $\norm{\flap(\eta_Rx_j)}_{L^p}=\norm{\flap w}_{L^p}$
    remains merely bounded. Hence $\{\eta_Rx_j\}_{R>1}$ is bounded in
    $\Lambda^{s,p}$, and the weak-compactness argument of the case $sp=n$
    (testing against $\varphi\in C_{\mathrm{c}}^\infty(\R^n)$ and Mazur's
    theorem) again yields $x_j\in Y$.
\end{proof}

\begin{theorem}\label{iso_on_Y}
    Let $s\in(0,2)$. Then
    \begin{equation}
        \flap :Y/\mathcal{P}_{[s-n/p]}\to L^p(\R^n)
    \end{equation}
    is an isomorphism.
\end{theorem}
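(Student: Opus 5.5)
We prove that $\flap:Y/\mathcal P_{[s-n/p]}\to L^p(\R^n)$ is well defined, injective, bounded below, and has dense closed range. The steps are taken in this order.

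\emph{Well-definedness and boundedness.} By Definition~\ref{fS} the map $u\mapsto\flap u$ is bounded from $\Lambda^{s,p}$ to $L^p$. By Lemma~\ref{poly_in_Y} we have $\mathcal P_{[s-n/p]}\subset Y$. Moreover $\flap q=0$ for $q\in\mathcal P_{[s-n/p]}$: constants are annihilated, and when $[s-n/p]=1$ we have $s>1$, so affine functions are annihilated (as checked in the proof of Lemma~\ref{poly_in_Y}). Hence the map passes to the quotient and is bounded.

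\emph{Injectivity.} Suppose $u\in Y$ and $\flap u=0$. By Theorem~\ref{Fall}, $u$ is affine, and constant if $s\le1$. We must show $u\in\mathcal P_{[s-n/p]}$, using the weighted conditions of Definition~\ref{fS}.
\begin{itemize}
\item If $sp<n$, then $\rho^{-s}u\in L^p$ forces $u=0$, since $\rho^{-sp}\notin L^1$.
\item If $n\le sp<n+p$, then $[s-n/p]=0$. A nonconstant linear part $a\cdot x$ would need $\rho^{1-s}\lg^{-\epsilon}\in L^p$ for the relevant $\epsilon\in\{0,1\}$, which fails because $(s-1)p\le n$. So $u$ is constant.
\item If $s-n/p\ge1$, then affine functions are already in $\mathcal P_1$.
\end{itemize}
Thus the kernel of $\flap$ on $Y$ is exactly $\mathcal P_{[s-n/p]}$.

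\emph{Lower bound (closed range).} We need
\[
\inf_{q\in\mathcal P_{[s-n/p]}}\|u-q\|_{\Lambda^{s,p}}\lesssim\|\flap u\|_{L^p},\qquad u\in Y.
\]
For $sp\ge n$ this is Proposition~\ref{Poincare_on_Y}. For $sp<n$ we argue as follows.
\begin{itemize}
\item The term $\|\rho^{-s}u\|_{L^p}$ is controlled by \eqref{sw} on $\mathscr S$, and then on $Y$ by density.
\item When $s\ge1$, the term $\|\rho^{1-s}Du\|_{L^p}$ comes from applying \eqref{sw} at order $s-1$ to $Du$, together with the Calder\'on--Zygmund bound $\|(-\triangle)^{\frac{s-1}{2}}Du\|_{L^p}\lesssim\|\flap u\|_{L^p}$; here $(s-1)p<n$ holds automatically.
\end{itemize}
Consequently the range of $\flap$ on $Y$ is closed in $L^p$.

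\emph{Density of the range.} This is the step I expect to be the main obstacle, since no explicit inverse is available for general $p$ when $sp\ge n$. We argue by duality. Let $g\in L^{p'}$ annihilate $\flap(\mathscr S)$. Since $\mathscr S$ is invariant under $\flap$ on the Fourier side only up to a singularity at $\xi=0$, it is cleaner to take $\phi\in\mathscr S$ with $\hat\phi$ supported away from the origin. These satisfy $\flap\phi\in\mathscr S$, and every Schwartz function whose Fourier transform vanishes near $0$ arises this way, as $\flap\phi$ with $\hat\phi=|\xi|^{-s}\hat\psi$. Hence $\int g\psi=0$ for all $\psi\in\mathscr S$ with $0\notin\operatorname{supp}\hat\psi$. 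This means $\hat g$ is supported at $\{0\}$, so $g$ is a polynomial. A polynomial in $L^{p'}$ must be $0$. Therefore the range is dense, and since it is closed, it equals $L^p$.

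The open mapping theorem (or simply the two-sided estimate) then gives the isomorphism.
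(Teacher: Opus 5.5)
Your proposal is correct and follows essentially the same route as the paper. You identify the kernel via Fall's theorem and Lemma~\ref{poly_in_Y}, get closed range from the Poincar\'e inequalities (\eqref{sw} for $sp<n$, Proposition~\ref{Poincare_on_Y} for $sp\ge n$), and get dense range because the annihilator of the test class consists of polynomials. The differences are cosmetic: you spell out the weighted-integrability check that rules out extra affine kernel elements, and the gradient term when $sp<n$. You also use Schwartz functions with Fourier transform vanishing near the origin instead of the full Lizorkin space $\mathscr S_0$, which makes the invertibility of the multiplier $|\xi|^{s}$ on that class immediate.
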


To prove the theorem, we introduce the \emph{Lizorkin test space}
\begin{equation}\label{Lizorkin_test_space}
    \begin{aligned}
      \mathscr S_0(\R^n):=&\Big\{\phi\in\mathscr S(\R^n):\
      \int_{\R^n}x^\alpha\phi(x)\,dx=0\ \ \forall\,\alpha\in\mathbb N_0^n\Big\}\\
      =&\Big\{\phi\in\mathscr S:\partial^\alpha\hat\phi(0)=0\ \ \forall\alpha\Big\}.
    \end{aligned}
\end{equation}
This is a subspace of the Schwartz space, its annihilator in $\mathscr S'$ is $\mathscr S_0^\perp=\mathcal P$ (the space of all polynomials), since if $v\in\mathscr S_0^\perp$, then
\[\langle\mathcal Fv,\,w\rangle=\langle v,\,\mathcal Fw\rangle=0,~\forall w\in C^\infty_{\mathrm{c}}(\R^n\setminus\{0\})\implies\operatorname{supp}\,\mathcal{F}v\subset\{0\}.\]
Thus $\mathcal Fv$ is a finite linear combination of derivatives of the Dirac mass, whose (inverse)
Fourier transform is a polynomial (cf.\ \cite[Corollary~2.4.2]{Grafakos}). 

Hence, $\mathscr S_0$ is dense in $L^p$ for $1<p<\infty$. Suppose it fails. By the Hahn--Banach theorem and the Riesz representation, there exists $v\in L^{p'}(\R^n)\setminus\{0\}$ vanishing $\mathscr S_0$, hence $v\in\mathcal P\cap L^{p'}=\{0\}$, a contradiction.

It is remarkable that the operator $\flap:\mathscr S_0\to\mathscr S_0$ is a continuous bijection (the multiplier $|\xi|^s$ preserves the infinite-order flatness of $\hat\phi$ at $\xi=0$.
\renewcommand{\proofname}{Proof of Theorem~\ref{iso_on_Y}}
\begin{proof}
    The fractional Laplacian $\flap:Y\to L^p(\R^n)$ is clearly linear and continuous. By Theorem~\ref{Fall}, the kernel of $\flap$ restricted to $\Lambda^{s,p}$ is contained in $\mathcal{P}_{[s-n/p]}$. Then lemma~\ref{poly_in_Y} asserts $\ker\,\flap|_Y=\mathcal{P}_{[s-n/p]}$.

    We show the image is a closed dense subset in $L^p(\R^n)$, hence the whole $L^p(\R^n)$. Combined with the Poincar\'e inequality (\eqref{sw} for $sp<n$ and Proposition~\ref{Poincare_on_Y} for $sp\ge n$), the map $\flap:Y/\mathcal{P}_{[s-n/p]}\to L^p(\R^n)$ is a bounded injection whose image is closed. Since $\mathscr S_0\subset\Lambda^{s,p}(\R^n)$ and $\flap$ is a bijection on $\mathscr S_0$, the image contains $\mathscr S_0$, a dense subset in $L^p(\R^n)$.
\end{proof}
\renewcommand{\proofname}{Proof}

\subsection{$Y=\Lambda^{s,p}(\R^n)$ and density of $C_{\mathrm{c}}^\infty$}

\begin{theorem}\label{Y_equals_Lambda}
    $Y=\Lambda^{s,p}(\R^n)$. In particular, $C_{\mathrm{c}}^\infty(\R^n)$ is dense in $\Lambda^{s,p}(\R^n)$, and
    $$\flap:\Lambda^{s,p}(\R^n)/\mathcal{P}_{[s-n/p]}\to L^p(\R^n)$$
    is an isomorphism.
\end{theorem}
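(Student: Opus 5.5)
The argument follows the scheme announced in the introduction. Let $u\in\Lambda^{s,p}(\R^n)$ be arbitrary. By definition $f:=\flap u\in L^p(\R^n)$. By Theorem~\ref{iso_on_Y} the map $\flap:Y/\mathcal P_{[s-n/p]}\to L^p$ is surjective, so there is $v\in Y$ with $\flap v=f$ in $L^p$.

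\textbf{Step 1: the difference is a polynomial.} Set $w:=u-v\in\Lambda^{s,p}$. The weighted conditions of Definition~\ref{fS} imply, by H\"older's inequality, that $w\in\cL_s$. Hence $\flap w$ is defined distributionally by Definition~\ref{def_fracLap}, and $\flap w=0$ in $\mathscr D'(\R^n)$.

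One point needs checking. For elements of $Y$, the quantity ``$\flap v$'' appearing in the norm is the $L^p$-limit of $\flap\phi_j$ for Schwartz approximants $\phi_j\to v$. This limit must agree with the distributional $\flap v$. It does, because convergence in $\Lambda^{s,p}$ implies convergence in the weighted space $L^p(\rho^{-sp}\lg^{-p})$. Therefore $\int\phi_j\flap\varphi\to\int v\flap\varphi$ for each $\varphi\in C_{\mathrm c}^\infty$, using $|\flap\varphi|\lesssim\rho^{-n-s}$ and $\rho^{-n-s}\rho^{s}\lg\in L^{p'}$.

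With this settled, Theorem~\ref{Fall} shows that $w$ is affine, and constant if $s\le1$.

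\textbf{Step 2: the admissible degree.} It remains to show $w\in\mathcal P_{[s-n/p]}$. The order-zero weighted condition requires $\rho^{-s}\lg^{-1}w\in L^p$, or $\rho^{-s}w\in L^p$ in the noncritical case.
\begin{itemize}
\item A nonzero constant satisfies this only if $sp\ge n$. For $sp=n$, the logarithmic weight is what makes it work, since $\rho^{-n}\lg^{-p}\in L^1$.
\item A polynomial with a nonzero linear part grows like $|x|$ on a cone. It satisfies the condition only if $(s-1)p\ge n$, that is, $s-n/p\ge1$. The critical case is again covered by the logarithm.
\end{itemize}
Thus $\deg w\le[s-n/p]$, with $w=0$ when $sp<n$. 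Lemma~\ref{poly_in_Y} then gives $w\in Y$, hence $u=v+w\in Y$, and so $Y=\Lambda^{s,p}$.

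\textbf{Step 3: consequences.} Density of $C_{\mathrm c}^\infty$ in $\Lambda^{s,p}$ now follows from Lemma~\ref{C_c_dense_in_Y_s_ge_1}. The isomorphism statement is Theorem~\ref{iso_on_Y} with $Y$ replaced by $\Lambda^{s,p}$. This includes the identification of the kernel as $\mathcal P_{[s-n/p]}$ and the Poincar\'e bound of Proposition~\ref{Poincare_on_Y} (or \eqref{sw}), which now holds on all of $\Lambda^{s,p}$.

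\textbf{Main obstacle.} The delicate point is the consistency check in Step 1: the $L^p$-valued extension of $\flap$ on $Y$ must coincide with the distributional operator on $\cL_s$. I would verify this by the weighted H\"older estimate above, which is uniform across the critical and noncritical weights. The degree count in Step 2 is a direct integrability computation in polar coordinates.
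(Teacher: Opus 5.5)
Your proposal is correct and is essentially the paper's proof: surjectivity of $\flap$ on $Y$, Fall's Liouville theorem applied to $u-v\in\cL_s$, and Lemma~\ref{poly_in_Y} to place the resulting polynomial back in $Y$. The only differences are that you spell out the degree count (an affine function satisfies the order-zero weighted condition only when its degree is at most $[s-n/p]$), which the paper compresses into one line, and that your Step~1 consistency check is automatic: $Y$ is a closure inside the complete space $\Lambda^{s,p}$, whose norm already uses the distributional $\flap$.
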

\begin{proof}
    Let $u\in\Lambda^{s,p}(\R^n)$. Then $\flap u\in L^p(\R^n)$. By the surjectivity in Theorem~\ref{iso_on_Y}, there exists $v\in Y$ such that $\flap v=\flap u$. Therefore $u-v$ is $s$-harmonic in $\R^n$. Since $u-v\in\Lambda^{s,p}(\R^n)\subset\cL_s$, Theorem~\ref{Fall} implies $u-v\in\mathcal{P}_{[s-n/p]}$. By Lemma~\ref{poly_in_Y}, $\mathcal{P}_{[s-n/p]}\subset Y$, hence $u=v+(u-v)\in Y$.
\end{proof}

The following isomorphism is derived by a duality argument.

\begin{definition}\label{def_dual_space}
For $s\in(0,2)$ and $p\in(1,\infty)$, define the negative-order space
\[
  \Lambda^{-s,p}(\R^n):=\big(\Lambda^{s,p'}(\R^n)\big)'.
\]
For $T\in\Lambda^{-s,p}$ and $q\in\mathcal P_{[s-n/p']}$, we write
$T\perp q$ if $\langle T,q\rangle=0$.
\end{definition}

\begin{theorem}[Dual isomorphism]\label{dual_isom}
Let $s\in(0,2)$, $p\in(1,\infty)$. Then
\[
  \flap:L^p(\R^n)\longrightarrow
  \Lambda^{-s,p}(\R^n)\perp\mathcal P_{[s-n/p']}
\]
is an isomorphism, in the sense that for every
$T\in\Lambda^{-s,p}(\R^n)$ with $T\perp\mathcal P_{[s-n/p']}$, there
exists a unique $h\in L^p(\R^n)$ such that
\[
  \langle T,v\rangle=\int_{\R^n}h\,\flap v\,dx,
  \qquad\forall\,v\in\Lambda^{s,p'}(\R^n),
\]
and $\|h\|_{L^p}\le C\|T\|_{\Lambda^{-s,p}}$.
\end{theorem}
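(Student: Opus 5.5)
The proof is a closed-range/duality argument resting on Theorem~\ref{Y_equals_Lambda} applied with the conjugate exponent $p'$. The basic object is the isomorphism
\[
  A:=\flap:\Lambda^{s,p'}(\R^n)/\mathcal P_{[s-n/p']}\longrightarrow L^{p'}(\R^n).
\]
Its inverse $A^{-1}:L^{p'}\to\Lambda^{s,p'}/\mathcal P_{[s-n/p']}$ is bounded.

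\textbf{Existence.} Take $T\in\Lambda^{-s,p}$ with $T\perp\mathcal P_{[s-n/p']}$. Because $T$ annihilates $\mathcal P_{[s-n/p']}$, it descends to a bounded functional $\tilde T$ on the quotient $\Lambda^{s,p'}/\mathcal P_{[s-n/p']}$. Its norm is controlled by $\|T\|_{\Lambda^{-s,p}}$: for each $q\in\mathcal P_{[s-n/p']}$ we have $|\langle T,v\rangle|=|\langle T,v-q\rangle|\le\|T\|\,\|v-q\|$, and we take the infimum over $q$. Now set $\ell:=\tilde T\circ A^{-1}$. This is a bounded linear functional on $L^{p'}$ with $\|\ell\|\le C\|T\|_{\Lambda^{-s,p}}$. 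The Riesz representation theorem gives $h\in L^p$ with $\ell(g)=\int hg$ for all $g\in L^{p'}$ and $\|h\|_{L^p}=\|\ell\|\le C\|T\|$. For any $v\in\Lambda^{s,p'}$, put $g=\flap v$; since $A^{-1}g=[v]$, we obtain $\langle T,v\rangle=\int h\,\flap v$, as required.

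\textbf{Uniqueness.} Suppose $h\in L^p$ satisfies $\int h\,\flap v=0$ for every $v\in\Lambda^{s,p'}$. By the surjectivity part of Theorem~\ref{Y_equals_Lambda}, $\flap v$ ranges over all of $L^{p'}$, so $h=0$.

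\textbf{Mapping property and isomorphism.} For the statement to be a genuine isomorphism, each $h\in L^p$ must also produce an element of the target space. Define $T_h(v):=\int h\,\flap v$. Hölder's inequality gives $|T_h(v)|\le\|h\|_{L^p}\|\flap v\|_{L^{p'}}\le\|h\|_{L^p}\|v\|_{\Lambda^{s,p'}}$, so $T_h\in\Lambda^{-s,p}$. Since $\flap q=0$ for $q\in\mathcal P_{[s-n/p']}\subset\Lambda^{s,p'}$ (by Lemma~\ref{poly_in_Y} or Theorem~\ref{Fall}), we have $T_h\perp\mathcal P_{[s-n/p']}$. We identify $\flap h$ with $T_h$; this is consistent with the distributional definition, because $C_{\mathrm c}^\infty\subset\Lambda^{s,p'}$. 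Thus $h\mapsto T_h$ is bounded, and by the two steps above it is bijective with bounded inverse.

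\textbf{Main obstacle.} The only delicate point is bookkeeping rather than analysis. One must check that the quotient norm of $\tilde T$ is controlled by $\|T\|$, which uses exactly the orthogonality hypothesis, and that the pairing defining $\flap h$ is the extension from the dense class $C_{\mathrm c}^\infty$, whose density was obtained in Theorem~\ref{Y_equals_Lambda}. No new estimates should be needed beyond the open mapping theorem implicit in the boundedness of $A^{-1}$.
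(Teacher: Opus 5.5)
Your proposal is correct and follows the paper's route. The paper takes the Banach-space adjoint $A'$ of the isomorphism $\flap:\Lambda^{s,p'}/\mathcal P_{[s-n/p']}\to L^{p'}$, uses $(\Lambda^{s,p'}/\mathcal P_{[s-n/p']})'\cong\Lambda^{-s,p}\perp\mathcal P_{[s-n/p']}$, and identifies $A'h$ with the distributional $\flap h$ by density of $C_{\mathrm c}^\infty$; your explicit Riesz-representation construction, with its existence, uniqueness and mapping steps, is exactly this adjoint written out (one small citation point: $\flap q=0$ for the at most affine $q\in\mathcal P_{[s-n/p']}$ is a direct computation, as the remark after the paper's proof notes, whereas Fall's theorem gives the opposite inclusion).
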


\begin{proof}
By Theorem~\ref{Y_equals_Lambda} (with $p$ replaced by $p'$), the map
$A:\Lambda^{s,p'}(\R^n)/\mathcal P_{[s-n/p']}\to L^{p'}(\R^n)$,
$Av:=\flap v$, is an isomorphism. Its Banach-space adjoint
\[
  A':L^p=(L^{p'})'\longrightarrow
  \big(\Lambda^{s,p'}/\mathcal P_{[s-n/p']}\big)'
  \cong\Lambda^{-s,p}\perp\mathcal P_{[s-n/p']}
\]
is therefore also an isomorphism.  By definition,
$A'$ acts as
\[
  \langle A'h,\,v\rangle
  =\langle h,\,\flap v\rangle_{L^p\times L^{p'}}
  =\int_{\R^n}h\,\flap v\,dx,
  \qquad h\in L^p,\;v\in\Lambda^{s,p'}.
\]
For $\varphi\in C_{\mathrm{c}}^\infty(\R^n)\subset\Lambda^{s,p'}(\R^n)$,
this coincides with the distributional definition of $\flap h$
(Definition~\ref{def_fracLap}):
\[
  \langle A'h,\varphi\rangle
  =\int_{\R^n}h\,\flap\varphi\,dx
  =\langle\flap h,\varphi\rangle_{\mathscr D'\times C_{\mathrm{c}}^\infty}.
\]
Since $C_{\mathrm{c}}^\infty$ is dense in $\Lambda^{s,p'}$
(Theorem \ref{Y_equals_Lambda}) and
$v\mapsto\int h\,\flap v$ is continuous on $\Lambda^{s,p'}$
\emph{(}by $|\!\int h\,\flap v|\le\|h\|_{L^p}\|\flap v\|_{L^{p'}}
\le C\|h\|_{L^p}\|v\|_{\Lambda^{s,p'}}$\emph{)},
the functional $A'h$ is the unique continuous extension of
$\flap h\big|_{C_{\mathrm{c}}^\infty}$ to $\Lambda^{s,p'}$.
We therefore write $\flap h:=A'h\in\Lambda^{-s,p}\perp
\mathcal P_{[s-n/p']}$.
\end{proof}
Note that the orthogonality condition $T\perp\mathcal P_{[s-n/p']}$ is
automatically satisfied for $T=\flap h$ with $h\in L^p$, because
$\langle\flap h,q\rangle=\int h\,\flap q=0$ for every
$q\in\mathcal P_{[s-n/p']}\subset\mathcal P_{[s]}=\ker(\flap)$.

\subsection{Compositions of some differential operators and related isomorphisms}

\begin{proposition}\label{D1_bdd}
    Let $-1<s<2,~1<p<\infty$, then the gradient map $D_1:\Lambda^{s,p}(\R^n)\to\Lambda^{s-1,p}(\R^n).$
\end{proposition}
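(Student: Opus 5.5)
The plan is to split according to the position of $s$ relative to the integers, since the meaning of $\Lambda^{s-1,p}$ changes there: $\Lambda^{0,p}=L^p$, and negative orders are defined by duality, $\Lambda^{-\sigma,p}=(\Lambda^{\sigma,p'})'$. There are four cases: (a) $1<s<2$, (b) $s=1$, (c) $0\le s<1$, (d) $-1<s<0$. In each case I would first prove the estimate
\[
\|Du\|_{\Lambda^{s-1,p}}\lesssim\|u\|_{\Lambda^{s,p}}
\]
for $u\in C_{\mathrm c}^\infty(\R^n)$, and then extend by density. For $s>0$ density is Theorem~\ref{Y_equals_Lambda}. For $s\le0$ the map is defined by transposition, so no density is needed on the $u$-side. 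The basic tool is the Fourier identity $\partial_j=-R_j(-\triangle)^{1/2}$ on $\mathscr S$, with $R_j$ the Riesz transform of Lemma~\ref{remainder_estimate}, which is bounded on $L^p$ and commutes with every $(-\triangle)^{t/2}$.

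In case (a), $\Lambda^{s-1,p}$ has order $s-1\in(0,1)$, so $m=0$. Its norm consists of a single weighted zeroth-order term and the top term. For the weighted term I would check that the critical indices match. Indeed, $(s-1)-n/p=0$ holds exactly when $s-n/p=1$. In that situation Definition~\ref{fS} puts the weight $\rho^{1-s}\lg^{-1}$ on first derivatives in $\Lambda^{s,p}$ and the weight $\rho^{-(s-1)}\lg^{-1}$ on the zeroth order in $\Lambda^{s-1,p}$, and these coincide. In the non-critical situation both weights are $\rho^{1-s}$. Hence the weighted term of $Du$ is literally one of the terms in $\|u\|_{\Lambda^{s,p}}$. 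For the top term I would use
\[
(-\triangle)^{\frac{s-1}{2}}\partial_ju=-R_j\flap u \quad\text{for } u\in C_{\mathrm c}^\infty,
\]
so that $L^p$-boundedness of $R_j$ gives $\|(-\triangle)^{\frac{s-1}{2}}Du\|_{L^p}\lesssim\|\flap u\|_{L^p}$.

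Case (b) is the same identity with $s-1=0$: $\|Du\|_{L^p}\lesssim\|(-\triangle)^{1/2}u\|_{L^p}$.

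For the passage to general $u\in\Lambda^{s,p}$ in (a) and (b), take $u_k\in C_{\mathrm c}^\infty$ with $u_k\to u$ in $\Lambda^{s,p}$. Then $Du_k$ is Cauchy in $\Lambda^{s-1,p}$, and its limit must equal the distributional gradient $Du$. The reason is that the weighted lower-order terms control $L^1_{\mathrm{loc}}$ convergence, and $\flap$ is continuous in the distributional sense of Definition~\ref{def_fracLap} along such sequences.

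In case (c), the target is $\Lambda^{s-1,p}=(\Lambda^{1-s,p'})'$ with $1-s\in(0,1]$. I would define
\[
\langle Du,v\rangle:=\int_{\R^n}\flap u\;R\,(-\triangle)^{\frac{1-s}{2}}v\,dx, \qquad v\in\Lambda^{1-s,p'},
\]
and verify on $C_{\mathrm c}^\infty$ via Plancherel that this equals $-\int u\,Dv$, i.e. the distributional gradient. (For $s=0$ read $\flap u=u$ and the pairing is simply $-\int u\,Dv$ with $Dv\in L^{p'}$.) Hölder's inequality and the boundedness of $R$ on $L^{p'}$ then give
\[
|\langle Du,v\rangle|\lesssim\|\flap u\|_{L^p}\,\|v\|_{\Lambda^{1-s,p'}}.
\]
Density of $C_{\mathrm c}^\infty$ in $\Lambda^{1-s,p'}$ (Theorem~\ref{Y_equals_Lambda}) shows that this functional is the unique continuous extension of the distribution $Du$.

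In case (d), I would define $D:\Lambda^{s,p}\to\Lambda^{s-1,p}$ as minus the Banach adjoint of
\[
D:\Lambda^{1-s,p'}\to\Lambda^{-s,p'}, \qquad 1-s\in(1,2),
\]
which is bounded by case (a) applied with exponent $p'$. On test functions this adjoint agrees with the distributional gradient.

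The step I expect to be the main obstacle is this identification: showing that the operators obtained by density or by transposition agree with the distributional $D$, so that the statement is about the honest gradient. In particular, in case (a) the definition of $\flap$ on the growing function $u$ is only distributional, and one must justify passing to the limit in $(-\triangle)^{\frac{s-1}{2}}\partial_ju_k=-R_j\flap u_k$. I would handle this by testing against $\varphi\in C_{\mathrm c}^\infty$ and using $\cL_s$-type convergence guaranteed by the weighted norms.
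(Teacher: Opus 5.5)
Your proof is correct and follows the paper's argument: the case $1\le s<2$ from Definition~\ref{fS}, the case $0<s<1$ via the pairing $\langle R\,\flap u,(-\triangle)^{\frac{1-s}{2}}\varphi\rangle$ (i.e.\ $D=(-\triangle)^{\frac{1-s}{2}}R\,\flap$), and $-1<s\le 0$ by transposition. Your version is more complete than the paper's. The paper calls the case $1<s<2$ ``just the definition'' and does not supply the top-order identity $(-\triangle)^{\frac{s-1}{2}}D_ju=\pm R_j\flap u$. It also omits the matching of the logarithmic weights and the identification with the distributional gradient, both of which you check.
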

\begin{proof}

    When $1\leq s<2$, this is just the definition. By duality, it extends to $-1<s\leq 0$. 
    
    If $0<s<1$, then for all $u\in\mathscr{S}(\R^n)\subset\Lambda^{s,p}(\R^n),\varphi\in\mathscr{S}(\R^n)\subset\Lambda^{1-s,p'}(\R^n)$,
    $$|\langle D_1u,\varphi\rangle|=|\langle R_1\flap u,(-\triangle)^{\frac{1-s}{2}}\varphi\rangle|\lesssim\norm{\flap u}_{L^p(\R^n)}\norm{(-\triangle)^{\frac{1-s}{2}}\varphi}_{L^{p'}(\R^n)}.$$
    In this case $D_1=(-\triangle)^{\frac{1-s}{2}}R_1\flap$.
\end{proof}

By similar arguments, fractional Laplacian operators are continuous as well. Combing Theorem~\ref{iso_on_Y} and Theorem~\ref{dual_isom}, we derive the following isomorphism.
\begin{proposition}\label{flap_bdd}
    Let $s,t\in(0,2),~1<p<\infty$. Then $(-\triangle)^{\frac{s+t}{2}}:\Lambda^{s,p}(\R^n)\to\Lambda^{-t,p}(\R^n)$ is bounded. More precisely, it is bounded on $\mathscr{S}(\R^n)$ then extends to $\Lambda^{s,p}(\R^n)$, and equals to $(-\triangle)^{\frac{t}{2}}\circ\flap$. Moreover
    \begin{equation}
        (-\triangle)^{\frac{s+t}{2}}:\Lambda^{s,p}(\R^n)/\mathcal{P}_{[s-n/p]}\to\Lambda^{-t,p}(\R^n)\perp\mathcal{P}_{[t-n/p']}
    \end{equation}
    is an isomorphism.
\end{proposition}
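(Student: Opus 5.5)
The plan is to factor $(-\triangle)^{\frac{s+t}{2}}$ through $L^p$ as the composition of the two isomorphisms already at hand:
\[
  \Lambda^{s,p}(\R^n)/\mathcal P_{[s-n/p]}
  \xrightarrow{\ \flap\ } L^p(\R^n)
  \xrightarrow{\ (-\triangle)^{\frac t2}\ }
  \Lambda^{-t,p}(\R^n)\perp\mathcal P_{[t-n/p']}.
\]
The first arrow is an isomorphism by Theorem~\ref{Y_equals_Lambda}. The second is an isomorphism by Theorem~\ref{dual_isom}, applied with $s$ replaced by $t$. A composition of isomorphisms is an isomorphism. So everything reduces to showing that $(-\triangle)^{\frac{s+t}{2}}$, defined on $\mathscr S(\R^n)$ and extended by continuity, coincides with $(-\triangle)^{\frac t2}\circ\flap$.

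First I treat Schwartz functions. For $u\in\mathscr S$, the distribution $(-\triangle)^{\frac{s+t}{2}}u$ acts on $v\in\mathscr S\subset\Lambda^{t,p'}$ by
\[
  \langle(-\triangle)^{\frac{s+t}{2}}u,v\rangle
  =\int u\,(-\triangle)^{\frac{s+t}{2}}v
  =\int\flap u\,(-\triangle)^{\frac t2}v .
\]
The second equality is Plancherel with the multiplier identity $|\xi|^{s+t}=|\xi|^s|\xi|^t$. All the functions involved are integrable, since $\flap u$ and $(-\triangle)^{t/2}v$ are smooth with $O(\rho^{-n-s})$ and $O(\rho^{-n-t})$ decay. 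Hence
\[
  |\langle(-\triangle)^{\frac{s+t}{2}}u,v\rangle|
  \le\|\flap u\|_{L^p}\,\|(-\triangle)^{\frac t2}v\|_{L^{p'}}
  \le\|u\|_{\Lambda^{s,p}}\,\|v\|_{\Lambda^{t,p'}} .
\]
By the density of $\mathscr S\supset C_{\mathrm c}^\infty$ in $\Lambda^{t,p'}$ (Theorem~\ref{Y_equals_Lambda}), this functional extends uniquely to an element of $\Lambda^{-t,p}$ with norm at most $\|\flap u\|_{L^p}$. By construction, that element is exactly $A'_t(\flap u)$, where $A'_t$ is the adjoint isomorphism from the proof of Theorem~\ref{dual_isom} (order $t$, exponent $p$).

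Next I extend to all of $\Lambda^{s,p}$. The bound $\|(-\triangle)^{\frac{s+t}{2}}u\|_{\Lambda^{-t,p}}\le\|\flap u\|_{L^p}$ and the density of $\mathscr S$ in $\Lambda^{s,p}$ (again Theorem~\ref{Y_equals_Lambda}, i.e.\ $Y=\Lambda^{s,p}$) give a unique bounded extension. Both the extension and $A'_t\circ\flap$ are continuous on $\Lambda^{s,p}$ and agree on $\mathscr S$, so they agree everywhere. This proves the boundedness and the identity $(-\triangle)^{\frac{s+t}{2}}=(-\triangle)^{\frac t2}\circ\flap$.

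For the isomorphism statement I need two facts.
\begin{itemize}
  \item \emph{Range.} The range lies in $\Lambda^{-t,p}\perp\mathcal P_{[t-n/p']}$, as noted after Theorem~\ref{dual_isom}: for $q\in\mathcal P_{[t-n/p']}$ we have $\flap q=0$ in the $t$-sense.
  \item \emph{Kernel.} The kernel is exactly $\mathcal P_{[s-n/p]}$. Since $A'_t$ is injective on $L^p$, $(-\triangle)^{\frac{s+t}{2}}u=0$ forces $\flap u=0$, and then $u\in\mathcal P_{[s-n/p]}$ by Theorem~\ref{Y_equals_Lambda}.
\end{itemize}
Surjectivity onto $\Lambda^{-t,p}\perp\mathcal P_{[t-n/p']}$ comes from composing the two surjections. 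The two-sided estimate follows from the estimates in each factor, with the inverse bounded by the open mapping theorem.

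The only delicate step is the identification of the continuous extension with $A'_t\circ\flap$. In particular, the pairing $\int u\,(-\triangle)^{\frac{s+t}{2}}v$ need not make sense for general $u\in\Lambda^{s,p}$, because $s+t$ may exceed $2$ and $u$ may grow. This is why I define everything through the Schwartz class and density, never pointwise, and use only Plancherel on $\mathscr S$.
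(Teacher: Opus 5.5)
Your proposal is correct and follows the paper's own route. The paper likewise obtains the isomorphism by composing $\flap:\Lambda^{s,p}(\R^n)/\mathcal P_{[s-n/p]}\to L^p(\R^n)$ with the dual isomorphism of Theorem~\ref{dual_isom} at order $t$, with boundedness on $\mathscr S(\R^n)$ coming from the same Schwartz-class duality estimate as in Proposition~\ref{D1_bdd}; your version spells out the identification of the continuous extension with $A'_t\circ\flap$ via the density of $\mathscr S(\R^n)$ (Theorem~\ref{Y_equals_Lambda}), which the paper leaves implicit. One cosmetic slip: in your range bullet you mean $(-\triangle)^{t/2}q=0$, not $\flap q=0$.
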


We claim that the space $\Lambda^{1,p}(\R^n)$ coincides with the classical weighted Sobolev space $W_0^{1,p}(\R^n)$ as in \cite{AGG94}.
\begin{proposition}\label{Lambda1=W1}
    Let $1<p<\infty,~n\geq 2$. Then $\Lambda^{1,p}(\R^n)=W_0^{1,p}(\R^n)$ with equivalent norm.
\end{proposition}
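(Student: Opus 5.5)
The plan is to compare the two norms term by term. Both spaces impose the same lower-order conditions, so the whole content of the statement is that the top-order term $\|(-\triangle)^{1/2}u\|_{L^p}$ in Definition~\ref{fS} comes for free from $Du\in L^p$.

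First I unwind Definition~\ref{fS} for $s=1$. Then $m=[s]=1$. The critical index $k$ equals $0$ exactly when $1-n/p=0$, i.e. $p=n$; the value $1-n/p=1$ is impossible, so otherwise $k=-1$. Hence:
\begin{itemize}
\item for $p\neq n$, $\Lambda^{1,p}$ consists of those $u$ with $\rho^{-1}u\in L^p$, $Du\in L^p$ and $(-\triangle)^{1/2}u\in L^p$;
\item for $p=n$, $\Lambda^{1,p}$ consists of those $u$ with $\rho^{-1}\lg^{-1}u\in L^p$, $Du\in L^p$ and $(-\triangle)^{1/2}u\in L^p$.
\end{itemize}
These lower-order conditions are precisely those defining $W_0^{1,p}(\R^n)$ in \cite{AGG94}, with the logarithm appearing only at $p=n$. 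Consequently $\Lambda^{1,p}\subset W_0^{1,p}$ with $\|u\|_{W_0^{1,p}}\le\|u\|_{\Lambda^{1,p}}$. It remains to prove, for $u\in W_0^{1,p}$,
\[
\|(-\triangle)^{1/2}u\|_{L^p}\lesssim\|Du\|_{L^p},
\]
which gives both the reverse inclusion and the equivalence of norms.

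For $\varphi\in C_{\mathrm{c}}^\infty(\R^n)$ I use the Fourier identity $|\xi|=\sum_j (\xi_j/|\xi|)\,\xi_j$, which gives
\[
(-\triangle)^{1/2}\varphi=-\sum_j R_j\partial_j\varphi,
\]
with $R_j$ the Riesz transforms normalized as in Lemma~\ref{remainder_estimate}. The Calder\'on--Zygmund theorem then yields $\|(-\triangle)^{1/2}\varphi\|_{L^p}\lesssim\|D\varphi\|_{L^p}$.

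To pass to a general $u\in W_0^{1,p}$, I invoke the density of $C_{\mathrm{c}}^\infty(\R^n)$ in $W_0^{1,p}(\R^n)$, which is proved in \cite{AGG94}. Take $u_k\in C_{\mathrm{c}}^\infty$ with $u_k\to u$ in $W_0^{1,p}$. The sequence $(-\triangle)^{1/2}u_k$ is Cauchy in $L^p$ and so converges to some $g\in L^p$ with $\|g\|_{L^p}\lesssim\|Du\|_{L^p}$.

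It remains to identify $g$ with the distributional $(-\triangle)^{1/2}u$ of Definition~\ref{def_fracLap}, and this is the step needing care. Fix a test function $\psi$. Then $|(-\triangle)^{1/2}\psi(x)|\lesssim\rho(x)^{-n-1}$, and $\rho^{-n-1}\cdot\rho\,\lg\in L^{p'}$ for every $p\in(1,\infty)$. By H\"older's inequality,
\[
\Big|\int (u_k-u)\,(-\triangle)^{1/2}\psi\Big|\lesssim\|\rho^{-1}\lg^{-1}(u_k-u)\|_{L^p}\to0,
\]
where I use that the weight $\rho^{-1}\lg^{-1}$ is dominated by the $W_0^{1,p}$ weight in every case. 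The same H\"older bound shows $u\in\cL_1$. Since also $\int u_k(-\triangle)^{1/2}\psi=\int \big((-\triangle)^{1/2}u_k\big)\psi\to\int g\psi$, we get $(-\triangle)^{1/2}u=g\in L^p$. Hence $u\in\Lambda^{1,p}$ and $\|u\|_{\Lambda^{1,p}}\lesssim\|u\|_{W_0^{1,p}}$.

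The main obstacle is this density input. If one prefers not to quote \cite{AGG94}, I would first try to reproduce its argument: a cutoff $\eta_R$, with the logarithmic cutoff $\eta(\ln\ln|x|/\ln\ln R)$ when $p=n$, followed by mollification. This is the only place where the critical case $p=n$ requires a separate treatment.
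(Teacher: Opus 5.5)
Your proof is correct and uses the same ingredients as the paper's: the Riesz-transform identity for $(-\triangle)^{1/2}$, Calder\'on--Zygmund, and density of smooth compactly supported functions. Your sign $(-\triangle)^{1/2}=-\sum_j R_j\partial_j$ is the correct one for the stated normalization of $R_j$. The only difference is a streamlining: since $Du\in L^p$ is already built into Definition~\ref{fS} at $s=1$, you need density only in $W_0^{1,p}$ (from \cite{AGG94}), and you check explicitly that the $L^p$-limit is the distributional $(-\triangle)^{1/2}u$. The paper instead invokes density in both spaces, with the one in $\Lambda^{1,p}$ resting on Theorem~\ref{Y_equals_Lambda}, and leaves that identification implicit.
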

\begin{proof}
    The Schwartz class $\mathscr{S}(\R^n)$ is dense in both spaces. Then the assertion follows from the fact that
    $$D_ju=R_j(-\triangle)^{\frac{1}{2}}u,~(-\triangle)^{\frac{1}{2}}u=\sum_jR_jD_ju$$
     and that the Riesz transform $R_j:L^p(\R^n)\to L^p(\R^n)$ is bounded.
\end{proof}

We now consider the case when the order of the
target space remains nonnegative. We first record a decay estimate for fractional Laplacian of Schwartz
functions.

\begin{lemma}\label{schwartz_decay}
    Let $\sigma\in(0,2)$ and $u\in\mathscr S(\R^n)$. Then for every
    multi-index $\lambda$,
    \[
      \big|D^\lambda(-\triangle)^{\frac{\sigma}{2}}u(x)\big|
      \le C_{\lambda,u}\,\rho(x)^{-n-\sigma},
      \qquad x\in\R^n.
    \]
    In particular, $(-\triangle)^{\frac{\sigma}{2}}u\in\cL_\tau$ for every
    $\tau>0$.
\end{lemma}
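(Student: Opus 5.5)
We want $|D^\lambda(-\triangle)^{\sigma/2}u(x)|\lesssim\rho(x)^{-n-\sigma}$ for $u\in\mathscr S(\R^n)$. The plan starts from the fact that $D^\lambda$ commutes with $(-\triangle)^{\frac{\sigma}{2}}$ on $\mathscr S$: both are Fourier multipliers, and $D^\lambda u\in\mathscr S$. It therefore suffices to treat $\lambda=0$ with $u$ replaced by $v:=D^\lambda u\in\mathscr S$. So the goal reduces to $|(-\triangle)^{\frac{\sigma}{2}}v(x)|\le C_v\rho(x)^{-n-\sigma}$ for every $v\in\mathscr S$.

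For $|x|\le 2$, boundedness follows from the symmetric second-difference representation
\[
(-\triangle)^{\frac{\sigma}{2}}v(x)=\frac{c_{n,\sigma}}{2}\int_{\R^n}\frac{2v(x)-v(x+y)-v(x-y)}{|y|^{n+\sigma}}\,dy .
\]
We split this at $|y|=1$. The inner part is bounded by $\|D^2v\|_{L^\infty}\int_{|y|<1}|y|^{2-n-\sigma}dy$, since $\sigma<2$. The outer part is bounded by $4\|v\|_{L^\infty}\int_{|y|>1}|y|^{-n-\sigma}dy$. Hence $\|(-\triangle)^{\frac{\sigma}{2}}v\|_{L^\infty}<\infty$, which handles bounded $x$.

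For $|x|$ large, write $R:=|x|/2$ and split the integral in the same formula into $|y|<R$ and $|y|\ge R$.

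On $|y|<R$ we use Taylor's theorem. The points $x\pm ty$ lie in $B_{R}(x)$, where $|z|\ge R$, so $|D^2v|\le C_N R^{-N}$ there. This part is therefore bounded by $C_NR^{-N}\int_{|y|<R}|y|^{2-n-\sigma}dy\lesssim R^{2-\sigma-N}$, which is far better than needed.

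On $|y|\ge R$ the term $2v(x)$ contributes at most $|v(x)|R^{-\sigma}$, which is rapidly decaying. The terms $v(x\pm y)$ contribute at most
\[
\int_{|y|\ge R}\frac{|v(x\pm y)|}{|y|^{n+\sigma}}dy\le R^{-n-\sigma}\|v\|_{L^1}.
\]
Since $R\sim\rho(x)$, this gives exactly $\rho^{-n-\sigma}$.

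Combining the two regimes yields the pointwise bound with a constant $C_{\lambda,u}$ that depends on finitely many Schwartz seminorms of $D^\lambda u$. The main point to watch is that the $\rho^{-n-\sigma}$ rate is sharp and comes only from the far-field $L^1$ term. This is why the splitting radius must be comparable to $|x|$ rather than fixed: with a fixed radius, the near-field Taylor estimate would not decay.

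For the final claim, take $\lambda=0$. Then
\[
\int\frac{|(-\triangle)^{\frac{\sigma}{2}}u|}{1+|x|^{n+\tau}}\,dx\lesssim\int\rho^{-n-\sigma}\,dx<\infty,
\]
because $\sigma>0$. The integrand is bounded and locally integrable, so $(-\triangle)^{\frac{\sigma}{2}}u\in\cL_\tau$ for every $\tau>0$.
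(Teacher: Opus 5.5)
Your proposal is correct and follows the paper's proof essentially step for step. Both reduce to $\lambda=0$ by commuting $D^\lambda$ with the multiplier, note boundedness, and then split the second-difference integral at $|y|=|x|/2$, using second-order Taylor with the decay of $D^2v$ on the inner part and the decay of $v$ (your explicit $\|v\|_{L^1}$ bound) on the outer part.
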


\begin{proof}
    Since $D^\lambda(-\triangle)^{\sigma/2}u
    =(-\triangle)^{\sigma/2}D^\lambda u$ and $D^\lambda u\in\mathscr S(\R^n)$,
    it suffices to treat $\lambda=0$. Boundedness is clear, so let $|x|\ge2$
    and write
    \[
      (-\triangle)^{\frac{\sigma}{2}}u(x)
      =-\frac{c_{n,\sigma}}{2}\int_{\R^n}
      \frac{u(x+z)+u(x-z)-2u(x)}{|z|^{n+\sigma}}\,dz.
    \]
    On $\{|z|\le|x|/2\}$, the second-order Taylor formula and the rapid decay
    of $D^2u$ give
    \[
      \int_{|z|\le|x|/2}\frac{|u(x+z)+u(x-z)-2u(x)|}{|z|^{n+\sigma}}\,dz
      \lesssim|x|^{-n-2}\int_{|z|\le|x|/2}|z|^{2-n-\sigma}\,dz
      \lesssim|x|^{-n-\sigma}.
    \]
    On $\{|z|>|x|/2\}$, the rapid decay of $u$ yields
    \[
      \int_{|z|>|x|/2}\frac{|u(x+z)|+|u(x-z)|+2|u(x)|}{|z|^{n+\sigma}}\,dz
      \lesssim|x|^{-n-\sigma}.
    \]
\end{proof}

Let $s\in(0,2)$, $p\in(1,\infty)$ and $0<t<s$. It can be easily checked that when $(s-t)p\ge n$ the
distributional definition of $(-\triangle)^{\frac{t}{2}}u$
(Definition~\ref{def_fracLap}) is not available for every
$u\in\Lambda^{s,p}(\R^n)$. As in Proposition~\ref{flap_bdd}, we therefore
define the operator on $\mathscr S(\R^n)$ and extend it by continuity; the
existence of the extension is part of the following statement, and its
consistency with Definition~\ref{def_fracLap} in the range $(s-t)p<n$ is
verified in Remark~\ref{consistency_rem}.

\begin{proposition}\label{scale_isom}
    Let $s\in(0,2)$, $p\in(1,\infty)$ and $0<t\le s$. Then the map
    $u\mapsto(-\triangle)^{\frac{t}{2}}u$, initially defined on
    $\mathscr S(\R^n)$, extends uniquely to an isomorphism
    \[
      (-\triangle)^{\frac{t}{2}}:\
      \Lambda^{s,p}(\R^n)/\mathcal P_{[s-n/p]}
      \longrightarrow
      \Lambda^{s-t,p}(\R^n)/\mathcal P_{[s-t-n/p]},
    \]
    and the extension satisfies
    \begin{equation}\label{flap_factorization}
      (-\triangle)^{\frac{s-t}{2}}\circ(-\triangle)^{\frac{t}{2}}=\flap
      \qquad\text{on }\Lambda^{s,p}(\R^n).
    \end{equation}
\end{proposition}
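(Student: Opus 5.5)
The plan is to obtain everything from the two isomorphisms already proved, Theorem~\ref{Y_equals_Lambda} at order $s$ and at order $s-t$, so that $(-\triangle)^{\frac t2}$ is sandwiched between them. For $t=s$ the statement is Theorem~\ref{Y_equals_Lambda} itself, with the convention $\Lambda^{0,p}=L^p$ and $\mathcal P_{[-n/p]}=\{0\}$. So assume $0<t<s$ and set $\sigma:=s-t\in(0,2)$. Write
\[
A_s:=\flap:\Lambda^{s,p}/\mathcal P_{[s-n/p]}\to L^p,\qquad A_\sigma:=(-\triangle)^{\frac{\sigma}{2}}:\Lambda^{\sigma,p}/\mathcal P_{[\sigma-n/p]}\to L^p;
\]
both are isomorphisms by Theorem~\ref{Y_equals_Lambda}. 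The candidate extension is
\[
T:=A_\sigma^{-1}\circ A_s:\ \Lambda^{s,p}/\mathcal P_{[s-n/p]}\to\Lambda^{\sigma,p}/\mathcal P_{[\sigma-n/p]}.
\]
As a composition of isomorphisms, $T$ is an isomorphism, and by construction $(-\triangle)^{\frac{\sigma}{2}}\circ T=\flap$, which is \eqref{flap_factorization}. Since $[\sigma-n/p]\le[s-n/p]$, the polynomial ambiguity on the left is at least as large as on the right. That is consistent, because $(-\triangle)^{t/2}$ annihilates $\mathcal P_{[s]}\supset\mathcal P_{[s-n/p]}$ by the symmetric-difference formula. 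It also explains why the target has to be a quotient.

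It remains to identify $T$ with the continuous extension of $(-\triangle)^{\frac t2}|_{\mathscr S}$. This requires two facts: for $u\in\mathscr S$, the function $(-\triangle)^{\frac t2}u$ lies in $\Lambda^{\sigma,p}$; and $(-\triangle)^{\frac\sigma2}(-\triangle)^{\frac t2}u=\flap u$. The second fact is the Fourier multiplier identity $|\xi|^\sigma|\xi|^t=|\xi|^s$, valid on $\mathscr S$. It matches Definition~\ref{def_fracLap}, since $w:=(-\triangle)^{t/2}u$ is smooth and decays like $\rho^{-n-t}$ by Lemma~\ref{schwartz_decay}, hence $w\in\cL_\sigma$, and pairing with $(-\triangle)^{\sigma/2}\phi$ can be done on the Fourier side.

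For the first fact, Lemma~\ref{schwartz_decay} gives $|D^\lambda w|\lesssim\rho^{-n-t}$, so $\rho^{|\lambda|-\sigma}D^\lambda w\in L^p$ for $|\lambda|\le[\sigma]$, because
\[
\rho^{|\lambda|-\sigma-n-t}\le\rho^{-n-(s-[\sigma])}\in L^p
\]
(here $s-[\sigma]>0$). The logarithmic weights only help. Finally, $(-\triangle)^{\sigma/2}w=\flap u\in L^p$. Hence $w\in\Lambda^{\sigma,p}$, and $A_\sigma[w]=A_s[u]$, so $T[u]=[w]$ on $\mathscr S$.

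Since $\mathscr S$ is dense in $\Lambda^{s,p}$ (Theorem~\ref{Y_equals_Lambda}), its image is dense in the quotient. Therefore $T$ is the unique continuous extension of the map $u\mapsto[(-\triangle)^{t/2}u]$ from $\mathscr S$, and this map is bounded by the estimate
\[
\|[(-\triangle)^{\frac t2}u]\|\lesssim\|\flap u\|_{L^p}\le\|u\|_{\Lambda^{s,p}}.
\]

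The step I expect to need the most care is the interpretation of "extends uniquely" and of the factorization when $[\sigma-n/p]\ge0$. There the extension lives only in the quotient $\Lambda^{\sigma,p}/\mathcal P_{[\sigma-n/p]}$. The composite $(-\triangle)^{\frac\sigma2}\circ(-\triangle)^{\frac t2}$ must then be read through $A_\sigma$, which is well defined on the quotient because $(-\triangle)^{\sigma/2}$ kills $\mathcal P_{[\sigma-n/p]}\subset\mathcal P_{[\sigma]}$ by Theorem~\ref{Fall} and Lemma~\ref{poly_in_Y}. With that reading the factorization holds on all of $\Lambda^{s,p}$ by continuity of both sides and density of $\mathscr S$.

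Consistency with the distributional definition when $\sigma p<n$ is deferred to Remark~\ref{consistency_rem}, as the paper indicates. If a direct check is wanted, I would pass to the limit in $\int u_k(-\triangle)^{t/2}\phi$ using the weighted bound $\|u\rho^{-s}\|_{L^p}$ against $|(-\triangle)^{t/2}\phi|\lesssim\rho^{-n-t}$. This pairing is finite exactly when $(s-t)p<n$.
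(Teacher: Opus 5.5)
Your proof is correct and uses the same ingredients as the paper's: Theorem~\ref{main_isom} at the orders $s$ and $\sigma=s-t$; the Schwartz-class check, via Lemma~\ref{schwartz_decay} and Parseval, that $(-\triangle)^{t/2}u\in\Lambda^{\sigma,p}$ with $(-\triangle)^{\sigma/2}(-\triangle)^{t/2}u=\flap u$; and the density of $\mathscr S$. The only difference is organizational: you define $T:=A_\sigma^{-1}\circ A_s$ up front, so the isomorphism property and \eqref{flap_factorization} come for free and only agreement on $\mathscr S$ needs checking, whereas the paper builds $T$ by continuous extension from $\mathscr S$ and then proves the two-sided estimate, injectivity, the factorization and surjectivity one by one.
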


\begin{proof}
    If $t=s$ the statement is Theorem~\ref{main_isom} under the convention
    above, so assume $t<s$ and set $\sigma:=s-t\in(0,2)$.

    Let $u\in\mathscr S(\R^n)$ and $v:=(-\triangle)^{t/2}u$. By
    Lemma~\ref{schwartz_decay}, $|D^\lambda v|\lesssim\rho^{-n-t}$ for every
    $\lambda$; hence, for $0\le|\lambda|\le[\sigma]$,
    \[
      \Norm{\rho^{|\lambda|-\sigma}D^\lambda v}_{L^p(\R^n)}
      \lesssim\Norm{\rho^{|\lambda|-s-n}}_{L^p(\R^n)}<\infty,
    \]
    because $|\lambda|\le[\sigma]<s$; this implies all the weighted
    conditions of Definition~\ref{fS} at order $\sigma$, the logarithmic
    weight only making them weaker. Moreover $v\in\cL_\sigma\cap L^2(\R^n)$
    by Lemma~\ref{schwartz_decay} and Plancherel's theorem, and for every
    $\phi\in C_{\mathrm{c}}^\infty(\R^n)$ Parseval's relation yields
    \[
    \begin{aligned}
      &\int_{\R^n}v\,(-\triangle)^{\frac{\sigma}{2}}\phi\,dx
      =\int_{\R^n}|\xi|^{t}\hat u(\xi)\,|\xi|^{\sigma}\hat\phi(-\xi)\,d\xi\\
      =&\int_{\R^n}|\xi|^{s}\hat u(\xi)\,\hat\phi(-\xi)\,d\xi
      =\int_{\R^n}(\flap u)\,\phi\,dx,
    \end{aligned}
    \]
    all the integrands being in $L^1$. Hence
    $(-\triangle)^{\sigma/2}v=\flap u\in L^p(\R^n)$ in the sense of
    Definition~\ref{def_fracLap}, so that $v\in\Lambda^{\sigma,p}(\R^n)$ with
    \[
      |v|_{\Lambda^{\sigma,p}}
      =\norm{(-\triangle)^{\frac{\sigma}{2}}v}_{L^p}
      =\norm{\flap u}_{L^p}
      =|u|_{\Lambda^{s,p}}.
    \]

    Theorem~\ref{main_isom}, applied at the orders $s$ and $\sigma$, asserts
    that $\flap:\Lambda^{s,p}/\mathcal P_{[s-n/p]}\to L^p$ and
    $(-\triangle)^{\sigma/2}:\Lambda^{\sigma,p}/\mathcal P_{[\sigma-n/p]}\to
    L^p$ are isomorphisms of Banach spaces. Therefore for every
    $u\in\mathscr S(\R^n)$,
    \begin{equation}\label{two_sided}
      \norm{v}_{\Lambda^{\sigma,p}/\mathcal P_{[\sigma-n/p]}}
      \sim\norm{(-\triangle)^{\frac{\sigma}{2}}v}_{L^p}
      =\norm{\flap u}_{L^p}
      \sim\norm{u}_{\Lambda^{s,p}/\mathcal P_{[s-n/p]}}.
    \end{equation}

    Now we define
    Define $T_0:\mathscr S(\R^n)\to
    \Lambda^{\sigma,p}(\R^n)/\mathcal P_{[\sigma-n/p]}$ by
    $T_0u:=v+\mathcal P_{[\sigma-n/p]}$. By \eqref{two_sided} and
    $\norm{u}_{\Lambda^{s,p}/\mathcal P_{[s-n/p]}}\le
    \norm{u}_{\Lambda^{s,p}}$, the map $T_0$ is bounded for the
    $\Lambda^{s,p}$-norm; since $\mathscr S(\R^n)$ is dense in
    $\Lambda^{s,p}(\R^n)$ (Theorem~\ref{Y_equals_Lambda}), it extends
    uniquely to a bounded operator
    $T:\Lambda^{s,p}(\R^n)\to
    \Lambda^{\sigma,p}(\R^n)/\mathcal P_{[\sigma-n/p]}$. Both sides of
    \eqref{two_sided} are continuous on $\Lambda^{s,p}(\R^n)$, so
    \eqref{two_sided} persists for every $u\in\Lambda^{s,p}(\R^n)$. In
    particular, $T$ vanishes on $\mathcal P_{[s-n/p]}$
    , while $Tu=0$ forces
    $u\in\mathcal P_{[s-n/p]}$, this space being closed. Hence $T$ descends
    to an isomorphism from $\Lambda^{s,p}/\mathcal P_{[s-n/p]}$ onto its
    image, which is closed in
    $\Lambda^{\sigma,p}/\mathcal P_{[\sigma-n/p]}$.

    We next verify \eqref{flap_factorization}. Since the operator of
    Theorem~\ref{main_isom} at order $\sigma$ is defined on the quotient,
    $(-\triangle)^{\sigma/2}$ annihilates $\mathcal P_{[\sigma-n/p]}$, and
    $u\mapsto(-\triangle)^{\sigma/2}Tu$ is therefore a well-defined bounded
    map from $\Lambda^{s,p}(\R^n)$ to $L^p(\R^n)$, as is $u\mapsto\flap u$.
    The two coincide on $\mathscr S(\R^n)$, hence on
    $\Lambda^{s,p}(\R^n)$ by density and continuity, which is
    \eqref{flap_factorization}.

   To verify surjectivity, let $f\in\Lambda^{\sigma,p}(\R^n)$, so that
    $(-\triangle)^{\sigma/2}f\in L^p(\R^n)$. By Theorem~\ref{main_isom}
    there exists $u\in\Lambda^{s,p}(\R^n)$ with
    $\flap u=(-\triangle)^{\sigma/2}f$. Choosing a representative
    $w\in\Lambda^{\sigma,p}(\R^n)$ of $Tu$, identity
    \eqref{flap_factorization} gives $(-\triangle)^{\sigma/2}(w-f)=0$ with
    $w-f\in\Lambda^{\sigma,p}(\R^n)$, then
    $w-f\in\mathcal P_{[\sigma-n/p]}$, that is,
    $Tu=f+\mathcal P_{[\sigma-n/p]}$. 
\end{proof}

\begin{remark}\label{consistency_rem}
    If $(s-t)p<n$, then $\mathcal P_{[s-t-n/p]}=\{0\}$ and the target
    quotient is trivial, so Proposition~\ref{scale_isom} reads: the operator
    \[
      (-\triangle)^{\frac{t}{2}}:\
      \Lambda^{s,p}(\R^n)/\mathcal P_{[s-n/p]}
      \longrightarrow\Lambda^{s-t,p}(\R^n)
    \]
    is an isomorphism. Moreover, in this range the extension coincides with
    the distributional operator of Definition~\ref{def_fracLap}. Indeed,
    $\Lambda^{s,p}(\R^n)\hookrightarrow\cL_t$  if $u_k\in\mathscr S(\R^n)$ converge to
    $u$ in $\Lambda^{s,p}(\R^n)$, then on the one hand
    $(-\triangle)^{t/2}u_k\to Tu$ in $\Lambda^{s-t,p}(\R^n)$, and on the
    other hand, for every $\phi\in C_{\mathrm{c}}^\infty(\R^n)$,
    Lemma~\ref{schwartz_decay} gives
    \[
      \Abs{\int_{\R^n}(u_k-u)\,(-\triangle)^{\frac{t}{2}}\phi\,dx}
      \lesssim\int_{\R^n}\frac{|u_k-u|}{1+|x|^{n+t}}\,dx
      \longrightarrow0,
    \]
    so that $(-\triangle)^{t/2}u_k\to(-\triangle)^{t/2}u$ in
    $\mathscr D'(\R^n)$ as well; hence $Tu=(-\triangle)^{t/2}u$.
\end{remark}

\begin{remark}\label{target_quotient_rem}
    When $(s-t)p\ge n$, the quotient on the target space cannot be removed:
    there is no bounded operator
    $S:\Lambda^{s,p}(\R^n)/\mathcal P_{[s-n/p]}\to\Lambda^{s-t,p}(\R^n)$
    which selects a representative of $(-\triangle)^{t/2}u$ and is onto.
    Indeed, $(s-t)p\ge n$ implies $1\in\Lambda^{s-t,p}(\R^n)$
    (Lemma~\ref{poly_in_Y}); if $S$ were onto, there would exist $u_*$ with
    $Su_*=1$, and \eqref{flap_factorization} would give
    $\flap u_*=(-\triangle)^{(s-t)/2}\,1=0$, hence
    $u_*\in\mathcal P_{[s-n/p]}$ and $Su_*=0$, a contradiction. 
\end{remark}

Specializing to $s=1$ and invoking Proposition~\ref{Lambda1=W1}, we obtain
fractional-order solvability with source space equal to the weighted Sobolev
space of \cite{AGG94}.

\begin{corollary}\label{W1_scale}
    Let $n\ge2$, $p\in(1,\infty)$ and $t\in(0,1]$. Then
    \[
      (-\triangle)^{\frac{t}{2}}:\
      \Lambda^{1,p}(\R^n)/\mathcal P_{[1-n/p]}
      \longrightarrow
      \Lambda^{1-t,p}(\R^n)/\mathcal P_{[1-t-n/p]}
    \]
    is an isomorphism.
\end{corollary}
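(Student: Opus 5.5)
This corollary is the special case $s=1$ of Proposition~\ref{scale_isom}, so the plan is to check that its hypotheses hold and then identify the source space. First, take $s=1\in(0,2)$, $p\in(1,\infty)$ and $0<t\le 1=s$. Proposition~\ref{scale_isom} then gives directly that $(-\triangle)^{\frac t2}$, defined on $\mathscr S(\R^n)$ and extended by continuity, induces an isomorphism
\[
  \Lambda^{1,p}(\R^n)/\mathcal P_{[1-n/p]}\longrightarrow \Lambda^{1-t,p}(\R^n)/\mathcal P_{[1-t-n/p]}.
\]
The endpoint $t=1$ needs a separate remark. Under the convention $\Lambda^{0,p}:=L^p(\R^n)$ we have $\mathcal P_{[-n/p]}=\{0\}$, so the target quotient is just $L^p$. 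The statement then reduces to Theorem~\ref{main_isom} with $s=1$, which the proof of Proposition~\ref{scale_isom} already treats as its first case.

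Second, I would record the interpretation promised in the preceding paragraph. By Proposition~\ref{Lambda1=W1}, which requires $n\ge 2$ and is the reason for that hypothesis, $\Lambda^{1,p}(\R^n)=W_0^{1,p}(\R^n)$ with equivalent norms. The quotient norms modulo the finite-dimensional (hence closed) space $\mathcal P_{[1-n/p]}$ are therefore equivalent as well. So the same operator is an isomorphism from $W_0^{1,p}(\R^n)/\mathcal P_{[1-n/p]}$ onto the same target. One point to verify is that $\mathcal P_{[1-n/p]}$ lies in both spaces. This follows from Lemma~\ref{poly_in_Y} together with $Y=\Lambda^{1,p}$, which is Theorem~\ref{Y_equals_Lambda}.

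No step is a genuine obstacle; the only care needed is bookkeeping. One must track which polynomial spaces are trivial: for $t<1$ with $(1-t)p<n$ the target quotient is trivial, as in Remark~\ref{consistency_rem}. One must also note that the extended operator agrees with the distributional $(-\triangle)^{t/2}$ whenever $(1-t)p<n$, and otherwise it is understood as the continuous extension from $\mathscr S$.
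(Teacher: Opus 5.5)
Your proposal is correct and follows the paper's own proof, which simply combines Proposition~\ref{scale_isom} at $s=1$ with Proposition~\ref{Lambda1=W1}. As you note, the statement about $\Lambda^{1,p}$ is already the $s=1$ case of Proposition~\ref{scale_isom}. The identification $\Lambda^{1,p}=W_0^{1,p}$ only adds the interpretation with the source space of \cite{AGG94}.
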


\begin{proof}
    Combine Proposition~\ref{scale_isom} with $s=1$ and
    Proposition~\ref{Lambda1=W1}.
\end{proof}

When the target order is negative, the quotient is replaced by an
orthogonality constraint.

\begin{corollary}\label{dual_scale_isom}
    Let $0<t\le\sigma<2$ and $p\in(1,\infty)$. For
    $T\in\Lambda^{t-\sigma,p}(\R^n)$ with
    $T\perp\mathcal P_{[\sigma-t-n/p']}$, define
    \[
      \big\langle(-\triangle)^{\tfrac{t}{2}}T,\,w\big\rangle
      :=\big\langle T,\,(-\triangle)^{\tfrac{t}{2}}w\big\rangle,
      \qquad w\in\Lambda^{\sigma,p'}(\R^n),
    \]
    the right-hand side being unambiguous because
    $(-\triangle)^{\frac{t}{2}}w$ is defined modulo
    $\mathcal P_{[\sigma-t-n/p']}$ (Proposition~\ref{scale_isom} with
    $(s,p)$ replaced by $(\sigma,p')$) and $T$ annihilates this space.
    Then
    \[
      (-\triangle)^{\tfrac{t}{2}}:\
      \Lambda^{t-\sigma,p}(\R^n)\perp\mathcal P_{[\sigma-t-n/p']}
      \longrightarrow
      \Lambda^{-\sigma,p}(\R^n)\perp\mathcal P_{[\sigma-n/p']}
    \]
    is an isomorphism, reducing to Theorem~\ref{dual_isom} when $t=\sigma$,
    and
    \begin{equation}\label{dual_factorization}
      (-\triangle)^{\tfrac{t}{2}}\circ(-\triangle)^{\tfrac{\sigma-t}{2}}
      =(-\triangle)^{\tfrac{\sigma}{2}}
      \qquad\text{on }L^p(\R^n),
    \end{equation}
    where $(-\triangle)^{\frac{\sigma-t}{2}}$ and
    $(-\triangle)^{\frac{\sigma}{2}}$ are the operators of
    Theorem~\ref{dual_isom} at orders $\sigma-t$ and $\sigma$.
\end{corollary}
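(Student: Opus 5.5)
Set $\sigma' := \sigma - t \in [0,2)$. The plan is to obtain the statement as the Banach-space adjoint of Proposition~\ref{scale_isom}, applied with $(s,t,p)$ replaced by $(\sigma,t,p')$. That proposition gives an isomorphism
\[
  B:=(-\triangle)^{\frac t2}:\ \Lambda^{\sigma,p'}/\mathcal P_{[\sigma-n/p']}\longrightarrow \Lambda^{\sigma',p'}/\mathcal P_{[\sigma'-n/p']},
\]
with the convention $\Lambda^{0,p'}=L^{p'}$ when $t=\sigma$. The adjoint of an isomorphism is an isomorphism, so
\[
  B':\big(\Lambda^{\sigma',p'}/\mathcal P_{[\sigma'-n/p']}\big)'\to\big(\Lambda^{\sigma,p'}/\mathcal P_{[\sigma-n/p']}\big)'
\]
is one as well.

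I would then identify these duals. Since $\mathcal P_k$ is a finite-dimensional, hence closed, subspace, the standard identification $(X/M)'\cong M^\perp\subset X'$ gives:
\begin{itemize}
\item the source dual is $\Lambda^{t-\sigma,p}\perp\mathcal P_{[\sigma-t-n/p']}$, using Definition~\ref{def_dual_space} with $\Lambda^{-\sigma',p}=(\Lambda^{\sigma',p'})'$;
\item the target dual is $\Lambda^{-\sigma,p}\perp\mathcal P_{[\sigma-n/p']}$.
\end{itemize}
Unwinding the definition, $B'T$ is exactly the functional $w\mapsto\langle T,(-\triangle)^{t/2}w\rangle$ of the statement. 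It is well defined because $T$ annihilates the ambiguity $\mathcal P_{[\sigma'-n/p']}$ in $(-\triangle)^{t/2}w$. It also annihilates $\mathcal P_{[\sigma-n/p']}$, because $B$ maps that space to the zero class. The norm equivalence is inherited from the quotient norms, since $\|T\|_{(X/M)'}=\|T\|_{X'}$ for $T\in M^\perp$. For $t=\sigma$ the source is $L^p$ with no constraint and $B$ is the operator of Theorem~\ref{main_isom}, so the argument collapses to the proof of Theorem~\ref{dual_isom}.

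For the factorization \eqref{dual_factorization}, take $h\in L^p$ and $w\in\Lambda^{\sigma,p'}$. First,
\[
  \big\langle(-\triangle)^{t/2}(-\triangle)^{\sigma'/2}h,w\big\rangle=\big\langle(-\triangle)^{\sigma'/2}h,(-\triangle)^{t/2}w\big\rangle.
\]
By Theorem~\ref{dual_isom} at order $\sigma'$, and choosing any representative of $(-\triangle)^{t/2}w$ in $\Lambda^{\sigma',p'}$, this equals $\int h\,(-\triangle)^{\sigma'/2}(-\triangle)^{t/2}w$. By \eqref{flap_factorization} at $(\sigma,p')$, that integral is $\int h\,(-\triangle)^{\sigma/2}w$, which is $\langle(-\triangle)^{\sigma/2}h,w\rangle$ by Theorem~\ref{dual_isom} at order $\sigma$.

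One caveat concerns the case $\sigma'$ small, where the representative of $(-\triangle)^{t/2}w$ is fixed only modulo polynomials. The computation is unaffected, since $\int h\,(-\triangle)^{\sigma'/2}q=0$ for $q$ in the kernel. When $\sigma'=0$ the identity is trivial.

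The main obstacle is bookkeeping rather than analysis. One must check that the pairing of Theorem~\ref{dual_isom} at order $\sigma'$ is the one used to define $T$ acting on $\Lambda^{\sigma',p'}$ classes. The most delicate point is that $(-\triangle)^{\sigma'/2}$, applied to the class of $(-\triangle)^{t/2}w$, is insensitive to the choice of representative; this follows because it annihilates $\mathcal P_{[\sigma'-n/p']}$, as noted after Theorem~\ref{dual_isom}.
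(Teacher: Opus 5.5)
Your proposal is correct and follows the paper's proof. The paper also takes the Banach-space adjoint of Proposition~\ref{scale_isom} at order $\sigma$ and exponent $p'$, identifies the quotient duals with the annihilators $(X/M)'\cong M^\perp$, and obtains \eqref{dual_factorization} by dualizing \eqref{flap_factorization}; you simply write that last step out explicitly. One harmless slip: the representative of $(-\triangle)^{t/2}w$ is ambiguous when $(\sigma-t)p'\ge n$, i.e.\ for $\sigma'$ large rather than small, and your argument handles that case anyway.
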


\begin{proof}
    The operator so defined is the Banach-space adjoint of the isomorphism
    of Proposition~\ref{scale_isom} at order $\sigma$ and exponent $p'$,
    under the isometric identifications
    $(\Lambda^{\sigma',p'}/\mathcal P_k)'\cong
    \Lambda^{-\sigma',p}\perp\mathcal P_k$ used in the proof of
    Theorem~\ref{dual_isom}; being the adjoint of an isomorphism, it is an
    isomorphism. The identity \eqref{dual_factorization} follows by taking
    adjoints in \eqref{flap_factorization} at order $\sigma$ and exponent
    $p'$, the adjoint composition reversing the order of the factors, and
    the adjoint of the quotient operator of Theorem~\ref{main_isom} being
    the operator of Theorem~\ref{dual_isom} by its construction. Finally,
    if $T$ is represented by a function of $\cL_t$, testing against
    $\phi\in C_{\mathrm{c}}^\infty(\R^n)$ shows that the extension is consistent with
    Definition~\ref{def_fracLap}.
\end{proof}

\section{Applications to fractional Stokes problem}\label{sec:appstokes}
In this section, we consider the fractional Stokes problem modeling
steady-state viscous fluid flows with nonlocal dissipation in $\R^n$ with
$n\ge2$. The system belongs to the family of the generalized Navier--Stokes
equations, in which the dissipation $-\triangle$ is replaced by
$(-\triangle)^{\alpha}$, $\alpha>0$, that is, $s=2\alpha$ in our notation.
For the associated Cauchy problem, global Leray--Hopf weak solutions exist
for every $\alpha>0$ \cite{Lions69}, global regularity holds for
$\alpha\ge\frac12+\frac n4$ \cite{Lions69,Wu03}, and for
$\alpha>\frac12$ global existence and uniqueness hold for initial data small
in the scaling-critical Besov spaces \cite{Wu06}. For the stationary problem in the whole three-dimensional space, existence
and nonexistence of solutions in Lebesgue and Lorentz settings were obtained
in \cite{JV24}. The fractional Stokes system is the linearization of these
stationary equations about the rest state: the velocity $u$ and the pressure
$P$ satisfy
\begin{equation}\label{fstokes}
    \begin{cases}
        &\flap u +\nabla P = f\\
        & \mathrm{div}\,u = g
    \end{cases} \quad \text{in }\R^n,
\end{equation}
where we further allow an inhomogeneous divergence $g$. Throughout, $u$ and
$f$ are $\R^n$-valued, while $P$ and $g$ are scalar; the equations are
understood in the sense of distributions, as made precise in
Definition~\ref{def_T} below. Existence and uniqueness are then derived by
combining the isomorphism of Theorem~\ref{main_isom} for the velocity with
the weighted theory of \cite{AGG94} for the pressure.

\begin{definition}\label{def_T}
Let $s\in(0,2)$. For $(u,P)\in(\cL_s)^n\times\mathscr S'(\R^n)$ we define
\[
  T(u,P):=\big(\flap u+\nabla P,\ -\mathrm{div}\,u\big)\in
  \mathscr D'(\R^n)^n\times\mathscr D'(\R^n)
\]
by its action on test fields: for all $\phi\in C_{\mathrm{c}}^\infty(\R^n)^n$ and
$\psi\in C_{\mathrm{c}}^\infty(\R^n)$,
\[
  \langle \flap u+\nabla P,\ \phi\rangle
  :=\int_{\R^n} u\cdot\flap\phi\,dx-\langle P,\ \mathrm{div}\,\phi\rangle,
  \qquad
  \langle -\mathrm{div}\,u,\ \psi\rangle:=\int_{\R^n} u\cdot\nabla\psi\,dx .
\]
\end{definition}

Every $u\in(\cL_s)^n$ defines a tempered distribution, and one can check that $\flap u\in\mathscr S'(\R^n)^n$ and the
map $T$ is well defined and continuous. Thus there is no confusions if the test functions are in $\mathscr S(\R^n)^n$.

\begin{lemma}\label{flap_tempered}
    Let $s\in(0,2)$ and $u\in\cL_s$. Then
    \[
      \langle\flap u,\eta\rangle:=\int_{\R^n}u\,\flap\eta\,dx,
      \qquad\eta\in\mathscr S(\R^n),
    \]
    converges absolutely and defines the unique extension of the
    distribution $\flap u$ of Definition~\ref{def_fracLap} to a tempered
    distribution. Consequently, if $F\in\mathscr S'(\R^n)$ and $\flap u=F$
    in $\mathscr D'(\R^n)$, then
    $\langle F,\eta\rangle=\int_{\R^n}u\,\flap\eta\,dx$ for all
    $\eta\in\mathscr S(\R^n)$.
\end{lemma}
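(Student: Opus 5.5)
The plan is to prove a pointwise decay bound $|\flap\eta(x)|\le C_\eta\,\rho(x)^{-n-s}$ for every $\eta\in\mathscr S(\R^n)$, with $C_\eta$ controlled by finitely many Schwartz seminorms of $\eta$. This is precisely Lemma~\ref{schwartz_decay} with $\sigma=s$ and $\lambda=0$. Inspecting its proof, the constant there depends only on $\sup|\eta|$, $\sup\rho^{n+2}|D^2\eta|$ and $\sup\rho^{n+s+1}|\eta|$, say. With this bound,
\[
\int_{\R^n}|u|\,|\flap\eta|\,dx\le C_\eta\int_{\R^n}\frac{|u(x)|}{\rho(x)^{n+s}}\,dx\lesssim C_\eta\int_{\R^n}\frac{|u(x)|}{1+|x|^{n+s}}\,dx<\infty,
\]
since $\rho^{n+s}\sim 1+|x|^{n+s}$. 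This gives absolute convergence for $u\in\cL_s$.

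Next I will show that $\eta\mapsto\int u\,\flap\eta$ is a tempered distribution, i.e.\ continuous on $\mathscr S$. The estimate above bounds it by $\|u\|_{\cL_s}$ times a finite sum of Schwartz seminorms $p_N(\eta)=\sup_{|\beta|\le 2}\sup_x\rho^N|D^\beta\eta|$. To make this rigorous I will rerun the two-region argument of Lemma~\ref{schwartz_decay}. For $|x|\le2$, split $|z|\le1$ and $|z|>1$, which gives the bound $\|D^2\eta\|_\infty+\|\eta\|_\infty$. For $|x|\ge2$, use Taylor's formula on $|z|\le|x|/2$ with $\sup_{B(x,|x|/2)}|D^2\eta|\lesssim|x|^{-n-2}\sup\rho^{n+2}|D^2\eta|$. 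On $|z|>|x|/2$, the term $|\eta(x)|\int|z|^{-n-s}$ is bounded by $|x|^{-s}\rho^{-n}\sup\rho^n|\eta|$. The terms $\int_{|z|>|x|/2}|\eta(x\pm z)||z|^{-n-s}dz$ are at most $(|x|/2)^{-n-s}\|\eta\|_{L^1}$, and $\|\eta\|_{L^1}\lesssim\sup\rho^{n+1}|\eta|$. Linearity is clear, so the functional lies in $\mathscr S'$.

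For the extension and uniqueness, the functional agrees with the distribution of Definition~\ref{def_fracLap} on $C_{\mathrm c}^\infty$ by definition. Since $C_{\mathrm c}^\infty$ is dense in $\mathscr S$, a continuous extension is unique. For the consequence, take $F\in\mathscr S'$ with $F=\flap u$ in $\mathscr D'$. Then $F$ and the functional $\eta\mapsto\int u\flap\eta$ are two tempered distributions agreeing on $C_{\mathrm c}^\infty$. By density they agree on $\mathscr S$, which yields the formula.

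The only point requiring care is the second step, namely making the constant in the decay bound depend on finitely many seminorms uniformly in $\eta$. The tail term using $\|\eta\|_{L^1}$ and the Taylor region using weighted $D^2\eta$ are handled as indicated; no genuine obstacle is expected.
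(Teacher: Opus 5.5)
Your proposal is correct and follows essentially the same route as the paper. The decay bound of Lemma~\ref{schwartz_decay} gives absolute convergence and continuity on $\mathscr S(\R^n)$, and the density of $C_{\mathrm c}^\infty$ in $\mathscr S$ yields both the uniqueness of the extension and the final identity. Your explicit seminorm tracking in the two-region estimate simply supplies the detail the paper asserts in one line, namely that the constant is controlled by finitely many Schwartz seminorms of $\eta$.
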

\begin{proof}
    By Lemma~\ref{schwartz_decay},
    \[
      \Abs{\int_{\R^n}u\,\flap\eta}
      \le\Big(\sup_{\R^n}\big(1+|x|^{n+s}\big)|\flap\eta|\Big)
        \int_{\R^n}\frac{|u|}{1+|x|^{n+s}}\,dx,
    \]
    and the first factor is bounded by finitely many Schwartz seminorms of
    $\eta$; hence the formula defines a tempered distribution, whose
    restriction to $C^\infty_{\mathrm c}(\R^n)$ is
    Definition~\ref{def_fracLap}. Uniqueness follows from the density of
    $C^\infty_{\mathrm c}(\R^n)$ in $\mathscr S(\R^n)$, and the last
    assertion is then immediate, $F$ and this extension agreeing on
    $C^\infty_{\mathrm c}(\R^n)$.
\end{proof}

The key structural tool is the commutation of $\flap$ with $\mathrm{div}$:

\begin{lemma}\label{commute}
Let $u\in(\cL_s)^n$ with $\mathrm{div}\,u=g\in\cL_s$. Then
$\mathrm{div}(\flap u)=\flap g$ in $\mathscr S'(\R^n)$.
\end{lemma}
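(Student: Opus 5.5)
We prove the identity by testing against Schwartz functions and moving every operator onto the test function. By Lemma~\ref{flap_tempered}, each component $\flap u_j$ is a tempered distribution, acting by
\[
\langle\flap u_j,\eta\rangle=\int_{\R^n}u_j\,\flap\eta\,dx,\qquad \eta\in\mathscr S(\R^n).
\]
Hence $\mathrm{div}(\flap u)$ is a well-defined tempered distribution. For $\eta\in\mathscr S(\R^n)$ we have
\[
\langle \mathrm{div}(\flap u),\eta\rangle=-\sum_j\langle\flap u_j,\partial_j\eta\rangle=-\sum_j\int_{\R^n}u_j\,\flap(\partial_j\eta)\,dx .
\]
Since $\flap$ is a Fourier multiplier, it commutes with $\partial_j$ on $\mathscr S(\R^n)$, so $\flap(\partial_j\eta)=\partial_j(\flap\eta)$. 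The right-hand side therefore becomes
\[
-\int_{\R^n} u\cdot\nabla\big(\flap\eta\big)\,dx .
\]

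The next step is to identify this with $\langle g,\flap\eta\rangle$. Put $\zeta:=\flap\eta$. This is smooth, but it is not Schwartz and not compactly supported. By Lemma~\ref{schwartz_decay}, however, $|D^\lambda\zeta|\lesssim\rho^{-n-s}$ for every multi-index $\lambda$. The hypothesis $\mathrm{div}\,u=g$ is a statement in $\mathscr D'$: $-\int u\cdot\nabla\psi=\int g\psi$ for all $\psi\in C_c^\infty$. To extend it to $\psi=\zeta$, I would use a cutoff. Take $\chi\in C_c^\infty(B_2)$ with $\chi=1$ on $B_1$, set $\chi_R=\chi(\cdot/R)$, and apply the identity to $\psi_R=\chi_R\zeta$. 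Then:
\begin{itemize}
\item $\int g\chi_R\zeta\to\int g\zeta$ by dominated convergence, since $|g\zeta|\lesssim |g|\rho^{-n-s}\in L^1$ because $g\in\cL_s$.
\item $\int u\cdot\chi_R\nabla\zeta\to\int u\cdot\nabla\zeta$ in the same way, because $u\in(\cL_s)^n$.
\item The error term $\int u\cdot\zeta\nabla\chi_R$ is bounded by $R^{-1}\int_{R<|x|<2R}|u|\rho^{-n-s}\to0$.
\end{itemize}
This gives $-\int u\cdot\nabla\zeta=\int g\,\zeta$.

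Combining the two steps,
\[
\langle\mathrm{div}(\flap u),\eta\rangle=\int_{\R^n} g\,\flap\eta\,dx=\langle\flap g,\eta\rangle ,
\]
where the last equality is Lemma~\ref{flap_tempered} applied to $g\in\cL_s$. The identity holds for every $\eta\in\mathscr S(\R^n)$, which proves $\mathrm{div}(\flap u)=\flap g$ in $\mathscr S'(\R^n)$.

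The only genuinely delicate step is passing from compactly supported test functions to $\zeta=\flap\eta$ in the definition of $\mathrm{div}\,u$. This is exactly where the decay bound $|D^\lambda\flap\eta|\lesssim\rho^{-n-s}$ (needed for both $\zeta$ and $\nabla\zeta$) and the growth class $\cL_s$ are used. The commutation $\flap\partial_j=\partial_j\flap$ on $\mathscr S$ is immediate from the Fourier side.
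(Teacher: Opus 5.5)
Your proof is correct and follows the paper's argument. You pair with $\eta\in\mathscr S(\R^n)$ via Lemma~\ref{flap_tempered}, commute $(-\triangle)^{s/2}$ with $\nabla$ on the Fourier side, and extend $\mathrm{div}\,u=g$ to the non-compactly-supported test function $(-\triangle)^{s/2}\eta$ with the cutoff $\chi_R$, the cross term vanishing by the $\cL_s$ integrability of $u$. Your sign bookkeeping in $-\int u\cdot\nabla\zeta=\int g\,\zeta$ is slightly cleaner than the paper's, and you make explicit that the decay of $\nabla\zeta$ is what justifies dominated convergence.
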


\begin{proof}
For $\psi\in\mathscr S(\R^n)$, by lemma~\ref{flap_tempered}
\[\langle\mathrm{div}(\flap u),\psi\rangle
=-\langle\flap u,\nabla\psi\rangle
=-\int u\cdot\flap(\nabla\psi)\,dx.\]
Since $\flap$ and $\nabla$ commute on $\mathscr S(\R^n)$ (both are Fourier
multipliers), $\flap(\nabla\psi)=\nabla(\flap\psi)$. Set $h:=\flap\psi$ and
let $\chi_R=\chi(\cdot/R)$ with $\chi\in C_{\mathrm{c}}^\infty$, $\chi\equiv1$ on $B_1$.
Then $\chi_R h\in C_{\mathrm{c}}^\infty$ and $\mathrm{div}\,u=g$ gives
$\int u\cdot\nabla(\chi_R h)=-\int g\chi_R h$.
The cross-term
\[|\int h\,u\cdot\nabla\chi_R|\lesssim R^{-1}\int_{R\le|x|\le2R}
\frac{|u|}{1+|x|^{n+s}}\to0\]
by the integrability of $\|u\|_{\cL_s}$, and dominated convergence yields
\[\int u\cdot\nabla h=-\int g\,h=\langle\flap g,\psi\rangle.\]
\end{proof}

Now we study the kernel of $T$:
\begin{proposition}\label{kernel}
Let $s\in(0,2)$. Then
\[
  N_s^{\mathrm{St}}:=\ker\big(T\big|_{(\cL_s)^n\times\mathscr S'}\big)
  =\big\{(\lambda,c)\in(\mathcal H_s)^n\times\mathcal P_0:
  \mathrm{div}\,\lambda=0\big\},
\]
where $\mathcal H_s:=\{w\in\cL_s:\flap w=0\}$ equals $\mathcal P_0$
for $0<s\le 1$ and $\mathcal P_1$ for $1<s<2$.
\end{proposition}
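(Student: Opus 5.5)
The plan is to prove both inclusions, beginning with the identification of $\mathcal H_s$. Theorem~\ref{Fall} shows that every $w\in\cL_s$ with $\flap w=0$ in $\mathscr D'$ is affine, and even constant when $0<s\le1$. For the converse we check membership in $\cL_s$ directly: a constant lies in $\cL_s$ for every $s>0$, while $x_j\in\cL_s$ exactly when $\int|x|/(1+|x|^{n+s})\,dx<\infty$, i.e.\ when $s>1$. Both constants and, for $s>1$, the linear functions satisfy $\flap w=0$; this holds because $\int w\,\flap\phi\,dx=0$ for $\phi\in C_{\mathrm c}^\infty$, as one sees on the Fourier side, where $|\xi|^s\hat\phi$ vanishes at $0$ together with its first derivatives when $s>1$. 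Alternatively one can use the symmetric second-difference formula, as in the proof of Lemma~\ref{poly_in_Y}. Hence $\mathcal H_s=\mathcal P_0$ for $s\le1$ and $\mathcal P_1$ for $1<s<2$.

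The inclusion $\supseteq$ is then immediate from Definition~\ref{def_T}. For $(\lambda,c)$ with $\lambda\in(\mathcal H_s)^n$, $\mathrm{div}\,\lambda=0$ and $c$ constant, we have $\int\lambda\cdot\flap\phi=0$ componentwise and $\langle c,\mathrm{div}\,\phi\rangle=0$. Moreover $\int\lambda\cdot\nabla\psi=-\int(\mathrm{div}\,\lambda)\psi=0$.

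For $\subseteq$, let $T(u,P)=0$ with $u\in(\cL_s)^n$ and $P\in\mathscr S'$. Then $\mathrm{div}\,u=0\in\cL_s$, and Lemma~\ref{commute} gives $\mathrm{div}(\flap u)=0$ in $\mathscr S'$. Taking the divergence of the first equation yields $\triangle P=0$ in $\mathscr S'$. So $\hat P$ is supported at the origin and $P$ is a harmonic polynomial, with $\nabla P=-\flap u$.

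The main obstacle is to show that $\nabla P$ vanishes, i.e.\ that $P$ is constant. The idea is to use the growth restriction coming from $u\in\cL_s$. Fix a polynomial $q$ of degree $d\ge0$, one component of $\nabla P$, and suppose $q=-\flap u_j$. Test against $\eta\in\mathscr S$ via Lemma~\ref{flap_tempered}: $\langle q,\eta\rangle=-\int u_j\flap\eta$. I would pick $\eta$ with $\hat\eta$ supported near the origin and, after dilation, $\eta_R(x)=R^{-n}\eta(x/R)$. Then the bound $|\flap\eta_R(x)|\lesssim R^{-s}\rho_R^{-n-s}$, with $\rho_R$ the weight rescaled at scale $R$, together with $u_j\in\cL_s$ gives $|\int u_j\flap\eta_R|\to0$ as $R\to\infty$. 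Indeed, splitting into $|x|\le R$ and $|x|>R$, the first region contributes $R^{-n-s}\int_{|x|\le R}|u_j|\le o(1)$ by the tail/dominated convergence argument, and the second is $\lesssim\int_{|x|>R}|u_j|/|x|^{n+s}\to0$.

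On the other hand, choosing $\eta$ appropriately, $\langle q,\eta_R\rangle$ detects the top-degree part of $q$ and does not decay. Concretely, with $\int x^\alpha\eta=\delta_{\alpha\beta}$ for a top-degree multi-index $\beta$ and vanishing lower moments, we get $\langle q,\eta_R\rangle=R^{d}\,\partial^\beta q/\beta!$. For $d\ge0$ this does not tend to $0$ unless that coefficient vanishes. Descending induction on the degree then forces $q=0$, so $\nabla P=0$ and $P\in\mathcal P_0$.

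Finally $\flap u_j=0$ for each $j$, so $u\in(\mathcal H_s)^n$ and $\mathrm{div}\,u=0$, which completes the inclusion. The moment-matching choice of $\eta$ and the uniform decay estimate for $\flap\eta_R$, obtained by scaling Lemma~\ref{schwartz_decay}, are the steps to be checked carefully.
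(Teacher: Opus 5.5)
Your proof is correct, and it reaches the key conclusion $\nabla P=0$ by a different route from the paper.

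\textbf{The paper's route.} It first proves that $u$ itself is a polynomial. For $\phi\in\mathscr S_0$ it writes $\phi=\flap\psi$ with $\psi\in\mathscr S_0$, using the bijectivity of $\flap$ on the Lizorkin space. Lemma~\ref{flap_tempered} then gives $\langle u_i,\phi\rangle=-\langle\partial_iP,\psi\rangle=0$, hence $u_i\in\mathscr S_0^\perp=\mathcal P$. Membership in $\cL_s$ caps the degree below $s$, affine functions are $s$-harmonic, and only then does $\nabla P=-\flap u=0$ follow.

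\textbf{Your route.} You reverse the order. Your dilation test with $\eta_R=R^{-n}\eta(\cdot/R)$ shows that $\flap(\cL_s)$ contains no nonzero polynomial, which kills $\nabla P$ first. After that, $u\in(\mathcal H_s)^n$ holds by definition, and Theorem~\ref{Fall} supplies the description of $\mathcal H_s$.

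\textbf{What each buys.} Your argument is more elementary: it needs neither the Lizorkin space nor the identification $\mathscr S_0^\perp=\mathcal P$, and it exposes a transparent scaling obstruction. The paper's argument, run with $P=0$, also reproves the Liouville statement for $\cL_s$. So the paper's identification of $\mathcal H_s$ does not depend on Theorem~\ref{Fall}, while yours does.

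\textbf{Two cosmetic points.}
\begin{enumerate}
\item Your displayed bound $R^{-s}\rho_R^{-n-s}$ is garbled, although your splitting uses the right quantity. The correct rescaled bound is $|\flap\eta_R(x)|\le C R^{-n-s}\rho(x/R)^{-n-s}=C(R^2+|x|^2)^{-\frac{n+s}{2}}$. Since this is dominated by $C\rho^{-n-s}$ for $R\ge1$, a single dominated-convergence step gives $\int u_j\,\flap\eta_R\to0$.
\item The moment matching, and the Fourier support condition on $\eta$, can be dropped. Writing $q_k$ for the homogeneous parts of $q$, the map $R\mapsto\langle q,\eta_R\rangle=\sum_k R^k\int q_k\eta$ is a polynomial in $R$ tending to $0$. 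Hence every $\int q_k\eta$ vanishes for every $\eta\in\mathscr S$, and $q=0$.
\end{enumerate}
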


\begin{proof}
Let $(u,P)\in(\cL_s)^n\times\mathscr S'$ with $\flap u+\nabla P=0$ and
$\mathrm{div}\,u=0$.
Since $\mathrm{div}\,u=0\in\cL_s$, Lemma~\ref{commute} gives
$\mathrm{div}(\flap u)=\flap 0=0$. Taking $\mathrm{div}$ of the first
equation we obtain $P$ is a harmonic tempered distribution, hence a harmonic polynomial.

Now the first equation reads $\flap u=-\nabla P$ with $\nabla P$ a polynomial. We claim that $u$ is a polynomial. For any $\phi\in\mathscr S_0$, set $\psi:=(\flap)^{-1}\phi\in\mathscr S_0$
(using the bijectivity of $\flap$ on $\mathscr S_0$). Then, 
\[
  \langle u_i,\phi\rangle
  =\langle u_i,\flap\psi\rangle
  =\langle\flap u_i,\psi\rangle
  =\langle-\partial_iP,\psi\rangle
  =0,
\]
where the second and third equalities are Lemma~\ref{flap_tempered}
applied to $u_i$ with $F=-\partial_iP\in\mathscr S'(\R^n)$, and the last
equality holds because $-\partial_iP\in\mathcal P$ and all moments of
$\psi$ vanish.

Since $u_i\in\mathcal P\cap\cL_s$, a degree-$k$ monomial
$|x|^k\in\cL_s$ requires
$\int_1^\infty r^{k+n-1}r^{-n-s}dr<\infty$, i.e.\ $k<s$. Thus
$u_i\in\mathcal P_0$ for $s\le 1$ and $u_i\in\mathcal P_1$ for $1<s<2$,
i.e.\ $u$ is at most affine. But every affine function is $s$-harmonic
(the antisymmetric first-difference vanishes under the principal-value
integral), so $\flap u=0$. The first equation then gives
$\nabla P=-\flap u=0$, hence $P$ is constant.

Therefore
$P\in\mathcal P_0$, $u\in(\mathcal H_s)^n$, $\mathrm{div}\,u=0$. Conversely,
any such pair satisfies $\flap u+\nabla P=0+0=0$ and $\mathrm{div}\,u=0$.
\end{proof}

\subsection{Existence and uniqueness}
	
Taking divergence to the first equation, one can easily check that if $(f,g)\in \mathscr S'\times\cL_s$, and $T(u,P)=(f,g)$ for some $(u,P)\in ((\cL_s)^n,\mathscr S')$, then $(u,P)$ solves the decoupled system
\begin{equation}\label{fstokes'}
	\begin{cases}
	&\lap P=\mathrm{div}\,f-\flap g,\\
	&  \flap u=f-\nabla P
	\end{cases} \quad \text{in }\R^n.
\end{equation}
Though $g\in\cL_s$ is not always assumed, motivated by the above, we study \eqref{fstokes} via \eqref{fstokes'}. We now state and prove the main result of this section.
	
\begin{theorem}\label{stokes_main}
	Let $n\ge2$, $s\in(0,2)$, $p\in(1,\infty)$, and let $f\in L^p(\R^n)^n$ and $g\in V_{s,p}$, i.e.
	\begin{equation}\label{g_condition}
		g\in\begin{cases}
			\Lambda^{s-1,p}(\R^n)\perp\mathcal P_{[1-s-n/p']},~0<s<1;\\
			\Lambda^{s-1,p}(\R^n),~1\leq s<2.
		\end{cases}
	\end{equation}
	Then \eqref{fstokes} has a
	solution $(u,P)\in\Lambda^{s,p}(\R^n)^n\times W_0^{1,p}(\R^n)$. Any two
	solutions in this class differ by an element of
	\begin{equation}\label{stokes_kernel_class}
		\mathcal N_{s,p}
		:=N_s^{\mathrm{St}}\cap\big[\Lambda^{s,p}(\R^n)^n\times W_0^{1,p}(\R^n)\big]
		=\big\{(\lambda,c)\in(\mathcal P_{[s-n/p]})^n\times\mathcal P_{[1-n/p]}:\
		\mathrm{div}\,\lambda=0\big\},
	\end{equation}
	conversely $(u+\lambda,P+c)$ is again a solution for every 
    $(\lambda,c)\in\mathcal N_{s,p}$, and
	\begin{equation}\label{stokes_est}
		\inf_{(\lambda,c)\in\mathcal N_{s,p}}
		\big(\|u-\lambda\|_{\Lambda^{s,p}}+\|P-c\|_{W_0^{1,p}}\big)
		\le C\big(\|f\|_{L^p}+\|g\|_{\Lambda^{s-1,p}}\big).
	\end{equation}
\end{theorem}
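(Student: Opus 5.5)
The plan is to solve the decoupled system \eqref{fstokes'}: first the pressure equation, then the velocity equation, and finally check that the pair solves \eqref{fstokes}.

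\emph{Step 1: the pressure.} The target for $\lap P$ is $\mathrm{div}\,f-\flap g$ in $W_0^{-1,p}(\R^n)=(W_0^{1,p'})'=\Lambda^{-1,p}$ (Proposition~\ref{Lambda1=W1}), orthogonal to $\mathcal P_{[1-n/p']}$.
\begin{itemize}
\item For $\mathrm{div}\,f$: this follows from Proposition~\ref{D1_bdd} at $s=0$ by duality, and it annihilates constants.
\item For $s\ge1$: we have $g\in\Lambda^{s-1,p}$, and $\flap g=(-\triangle)^{\frac12}\circ(-\triangle)^{\frac{s-1}{2}}g$ lands in $\Lambda^{-1,p}$ by Proposition~\ref{flap_bdd} with the pair $(s-1,1)$. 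When $s=1$ it is $(-\triangle)^{1/2}g$ with $g\in L^p$, handled by Theorem~\ref{dual_isom}. Orthogonality to constants holds because constants are $s$-harmonic.
\item For $0<s<1$: $g\in\Lambda^{s-1,p}\perp\mathcal P_{[1-s-n/p']}$. Corollary~\ref{dual_scale_isom} with $\sigma=1$ and $t=s$ gives $(-\triangle)^{s/2}g\in\Lambda^{-1,p}\perp\mathcal P_{[1-n/p']}$. The orthogonality hypothesis on $g$ is exactly what this corollary needs.
\end{itemize}
The weighted Laplace isomorphism of \cite{AGG94},
\[
\lap:W_0^{1,p}/\mathcal P_{[1-n/p]}\to W_0^{-1,p}\perp\mathcal P_{[1-n/p']},
\]
then yields $P\in W_0^{1,p}$, unique modulo $\mathcal P_{[1-n/p]}$, with $\inf_c\|P-c\|_{W_0^{1,p}}\lesssim\|f\|_{L^p}+\|g\|_{\Lambda^{s-1,p}}$.

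\emph{Step 2: the velocity.} Since $\nabla P\in L^p$, apply Theorem~\ref{main_isom} componentwise to get $u\in\Lambda^{s,p}(\R^n)^n$ with $\flap u=f-\nabla P$ and the corresponding estimate. The first equation of \eqref{fstokes} then holds.

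\emph{Step 3: recovering $\mathrm{div}\,u=g$.} This is the main obstacle. Set $h:=\mathrm{div}\,u-g\in\Lambda^{s-1,p}$, using Proposition~\ref{D1_bdd} for $\mathrm{div}\,u$. I would show $(-\triangle)^{s/2}h=0$ in $\Lambda^{-1,p}$ as follows:
\begin{itemize}
\item Compute $\mathrm{div}$ of the first equation along the scale, using \eqref{flap_factorization} and its dual \eqref{dual_factorization} to commute $\mathrm{div}$ with $\flap$ on $\Lambda^{s,p}$. This is verified on $\mathscr S$ and extended by the density from Theorem~\ref{Y_equals_Lambda}, as in Lemma~\ref{commute}.
\item Combine with $\lap P=\mathrm{div}\,f-\flap g$ to get $\flap h=0$.
\item Invoke injectivity of the isomorphisms:
  \begin{itemize}
  \item for $s\ge1$, Proposition~\ref{scale_isom}/Theorem~\ref{main_isom} at order $s-1$ give $h\in\mathcal P_{[s-1-n/p]}$;
  \item for $s<1$, Corollary~\ref{dual_scale_isom} gives $h=0$, provided $h\perp\mathcal P_{[1-s-n/p']}$, which holds since $\mathrm{div}\,u$ kills constants and $g$ is orthogonal by assumption.
  \end{itemize}
\item In the case $s\ge1$, the residual affine $h$ is an element of $\mathcal P_{[s-1-n/p]}$, present only when $(s-1)p\ge n$, so it is at most a constant. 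Absorb it by modifying $u$ by a polynomial $\lambda\in\mathcal P_{[s-n/p]}^n$ with $\mathrm{div}\,\lambda=-h$, e.g. $\lambda=-h\,x_1e_1/1$. This is admissible since then $[s-n/p]\ge1$, and it leaves $\flap u$ unchanged.
\end{itemize}

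\emph{Step 4: uniqueness and the estimate.} If two solutions are given, their difference lies in $N_s^{\mathrm{St}}$ by Proposition~\ref{kernel}, because $\Lambda^{s,p}\subset\cL_s$ and $W_0^{1,p}\subset\mathscr S'$. Intersecting with $\Lambda^{s,p}(\R^n)^n\times W_0^{1,p}(\R^n)$ gives exactly the description \eqref{stokes_kernel_class}: affine maps lie in $\Lambda^{s,p}$ only up to degree $[s-n/p]$ (Theorem~\ref{main_isom} together with Lemma~\ref{poly_in_Y}), and constants lie in $W_0^{1,p}$ iff $p\ge n$. The converse inclusion is immediate. Estimate \eqref{stokes_est} follows by chaining the bounds from Steps 1 and 2, choosing representatives modulo $\mathcal N_{s,p}$; the normalization of $P$ modulo constants affects $u$ only through $\nabla P$, which is invariant under such shifts.
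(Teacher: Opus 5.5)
Steps~1 and~2 and your uniqueness argument match the paper. Step~3 is a different but sound route:
\begin{itemize}
\item The paper shows that $w=\mathrm{div}\,u-g$ is a polynomial by testing against the Lizorkin class $\mathscr S_0$ with $\psi=(-\triangle)^{-s/2}\phi$. It then bounds the degree of $w$ by testing against dilates $\psi(\cdot/R)$ and comparing powers of $R$.
\item You instead commute $\mathrm{div}$ with $(-\triangle)^{s/2}$ as maps $\Lambda^{s,p}\to\Lambda^{-1,p}$, checking this on $\mathscr S$ and extending by density. You then read off the residual from the injectivity of $(-\triangle)^{s/2}$: on $\Lambda^{s-1,p}/\mathcal P_{[s-1-n/p]}$, and on $\Lambda^{s-1,p}\perp\mathcal P_{[1-s-n/p']}$ when $s<1$, after checking that $\mathrm{div}\,u$ annihilates constants.
\end{itemize}
Your route lands directly on the sharp residual space $\mathcal P_{[s-1-n/p]}$ and avoids the degree counting at infinity, where logarithms appear when $(s-1)p=n$. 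The price is that it rests entirely on Proposition~\ref{flap_bdd} and Corollary~\ref{dual_scale_isom}.

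\textbf{The gap is in the estimate \eqref{stokes_est}.}
\begin{itemize}
\item Chaining Steps~1--2 bounds $\inf_c\|P-c\|_{W_0^{1,p}}$ and $\inf_{q\in(\mathcal P_{[s-n/p]})^n}\|u-q\|_{\Lambda^{s,p}}$. However, the infimum in \eqref{stokes_est} runs only over \emph{divergence-free} $\lambda$.
\item When $s-n/p\ge1$, the optimal $q=a+Bx$ may have $\operatorname{tr}B\neq0$. Your own correction $-c_0x_1e_1$ is not divergence-free either. So ``choosing representatives modulo $\mathcal N_{s,p}$'' does not follow from the chained bounds.
\item Your remark about normalizing $P$ addresses a non-issue. $\mathcal N_{s,p}$ is a product, so $c$ can be chosen independently of $\lambda$.
\end{itemize}

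\textbf{The missing step must use the divergence equation.}
\begin{itemize}
\item Test $\mathrm{div}(u-q)=g-\operatorname{tr}B$ against a fixed $\psi\in C_{\mathrm{c}}^\infty(B_2)$ with $\int\psi=1$. This gives $|\operatorname{tr}B|\lesssim\|g\|_{\Lambda^{s-1,p}}+\|u-q\|_{\Lambda^{s,p}}$.
\item Then $\tilde q:=q-\frac{\operatorname{tr}B}{n}x$ is divergence-free, and $\|u-\tilde q\|_{\Lambda^{s,p}}\le\|u-q\|_{\Lambda^{s,p}}+\frac{|\operatorname{tr}B|}{n}\|x\|_{\Lambda^{s,p}}$.
\item The solution set is a coset of $\mathcal N_{s,p}$, so the resulting bound transfers to every solution.
\end{itemize}
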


To derive the system \eqref{fstokes'} from \eqref{fstokes}, we have to recover
lemma~\ref{commute} for $g$ as in \eqref{g_condition}. Recall $u\in\Lambda^{s,p}(\R^n)\subset\cL_s(\R^n)$ and $\flap g\in\Lambda^{-1,p}(\R^n)\perp\mathcal{P}_{[1-n/p']}\hookrightarrow\mathscr S'(\R^n)$.
\begin{lemma}\label{commute'}
    Let $u\in(\cL_s)^n$ with $\mathrm{div}\,u=g$ for g as in \eqref{g_condition}. Then
    $\mathrm{div}(\flap u)=\flap g$ in $\mathscr S'(\R^n)$.
\end{lemma}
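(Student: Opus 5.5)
We follow the proof of Lemma~\ref{commute}, replacing the integral pairing of $g$ by the duality pairing of $\Lambda^{s-1,p}$ and the operator $\flap g$ by its extension along the scale. Fix $\psi\in\mathscr S(\R^n)$. By Lemma~\ref{flap_tempered} and commutation of Fourier multipliers on $\mathscr S$,
\[
\langle\mathrm{div}(\flap u),\psi\rangle=-\int u\cdot\nabla h\,dx,\qquad h:=\flap\psi .
\]
Since $h$ is only Schwartz-decaying like $\rho^{-n-s}$ (Lemma~\ref{schwartz_decay}), we cut off: with $\chi_R$ as before, $\chi_R h\in C_{\mathrm c}^\infty$, and the distributional identity $\mathrm{div}\,u=g$ gives
\[
\int u\cdot\nabla(\chi_R h)\,dx=-\langle g,\chi_R h\rangle .
\]
The cross term $\int h\,u\cdot\nabla\chi_R$ tends to $0$ exactly as in Lemma~\ref{commute}, using $u\in(\cL_s)^n$. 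So it remains to show $\langle g,\chi_R h\rangle\to\langle\flap g,\psi\rangle$, where the right side is the scale extension.

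\textbf{Case $1\le s<2$.} Here $g\in\Lambda^{s-1,p}$ is a function with $\rho^{1-s}g\in L^p$ (with a log weight if critical), hence $g\in\cL_s$ by Hölder, since $\rho^{s-1-n-s}=\rho^{-1-n}\in L^{p'}$ (with a $\lg$ factor still fine). Then dominated convergence gives $\langle g,\chi_Rh\rangle\to\int g\,h=\int g\,\flap\psi$, and we only need to identify this with $\flap g$ as an element of $\Lambda^{-1,p}$ (order $s-1-s=-1$) tested on $\psi$. For $s=1$, $g\in L^p$ and this is Theorem~\ref{dual_isom}'s definition. For $s>1$, Proposition~\ref{flap_bdd} (with orders $s-1$ and $t=1$) defines $\flap g$ by continuity from $\mathscr S$. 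We approximate $g$ by $g_k\in\mathscr S$ in $\Lambda^{s-1,p}$ (density, Theorem~\ref{Y_equals_Lambda}). For these, $\langle\flap g_k,\psi\rangle=\int g_k\flap\psi$ by Parseval. Both sides pass to the limit: the left side by boundedness of the extension, the right side because $\Lambda^{s-1,p}\hookrightarrow\cL_s$ continuously and $|\flap\psi|\lesssim\rho^{-n-s}$.

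\textbf{Case $0<s<1$.} Now $g\in\Lambda^{s-1,p}=(\Lambda^{1-s,p'})'$ is a functional, and $\flap g\in\Lambda^{-1,p}$ is defined via Corollary~\ref{dual_scale_isom} with $\sigma=1$, $t=s$:
\[
\langle\flap g,w\rangle=\langle g,\flap w\rangle,\qquad w\in\Lambda^{1,p'},
\]
which is well defined modulo $\mathcal P_{[1-s-n/p']}$ precisely because $g\perp\mathcal P_{[1-s-n/p']}$. With $w=\psi$ the limit reduces to $\langle g,\chi_R h\rangle\to\langle g,h\rangle$. We must check that $\chi_R h\to h$ in $\Lambda^{1-s,p'}$ modulo $\mathcal P_{[1-s-n/p']}$, where $h=\flap\psi$ is a representative of the image of $\psi$ under Proposition~\ref{scale_isom} (as in its proof, $h\in\Lambda^{1-s,p'}$). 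The weighted $L^{p'}$ part converges by dominated convergence. For the top order,
\[
(-\triangle)^{\frac{1-s}{2}}\big((1-\chi_R)h\big)\to0\quad\text{in }L^{p'}
\]
follows from Lemma~\ref{moment_GN} (\eqref{GN_first} when $1-s<1$): $\|(1-\chi_R)h\|_{L^{p'}}\to0$, while $\|D((1-\chi_R)h)\|_{L^{p'}}$ stays bounded (indeed tends to $0$), using Lemma~\ref{schwartz_decay} for $h$ and $Dh$.

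\textbf{Main obstacle.} The expected difficulty is this last step for $0<s<1$, and more generally justifying that the pairing $\langle g,\chi_R h\rangle$ with $\chi_Rh\in C_{\mathrm c}^\infty$ agrees with the distributional meaning of $\mathrm{div}\,u=g$. This requires $g$ restricted to test functions to coincide with the distribution $\mathrm{div}\,u$, which is the meaning of the hypothesis. If convergence in the full norm fails at critical exponents, we would instead use the bounded-plus-weak-convergence argument with Mazur's theorem, as in Lemma~\ref{poly_in_Y}.
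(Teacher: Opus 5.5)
Your proof is correct, but for $0<s<1$ it takes a different route from the paper. For $1\le s<2$ both arguments reduce to Lemma~\ref{commute} via $g\in\cL_s$. Your extra approximation by $g_k\in\mathscr S(\R^n)$, which identifies $\int g\,\flap\psi$ with the scale extension of Proposition~\ref{flap_bdd}, makes explicit a point the paper leaves implicit.

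For $0<s<1$ the paper proceeds in three steps:
\begin{itemize}
\item it asserts the identity only against the Lizorkin space $\mathscr S_0(\R^n)$;
\item it concludes that $w=\mathrm{div}(\flap u)-\flap g$ is a polynomial, because $\mathscr S_0^\perp=\mathcal P$;
\item it kills the top homogeneous part of $w$ by testing against dilates $\psi_R$ and comparing the growth $R^{n+d}$ with the bounds $O(R^{n-1})$ (from $u\in\cL_s$) and $O(R^{n/p'-1})$ (from $g\in\Lambda^{s-1,p}$).
\end{itemize}
You instead prove the identity on all of $\mathscr S(\R^n)$ at once. You show $\chi_R\flap\psi\to\flap\psi$ in the norm of $\Lambda^{1-s,p'}(\R^n)$. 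This works because $|D^\lambda\flap\psi|\lesssim\rho^{-n-s}$ places $\flap\psi$ in $W^{1,p'}$, so the top-order term is controlled by interpolation.

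Your route is more direct and self-contained: there is no annihilator argument and no homogeneity bookkeeping, and it is a genuine extension of the proof of Lemma~\ref{commute} to non-integrable pairings. The paper's route only ever needs Schwartz (indeed Lizorkin) test functions in the hypothesis, where the cutoff approximation is exactly Lemma~\ref{C_c_dense_in_Y_s_ge_1}. It also reuses the polynomial-rigidity pattern that reappears in the proof of Theorem~\ref{stokes_main}.

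Two small remarks on your write-up. First, \eqref{GN_first} is stated only for Schwartz functions, so applying it to $(1-\chi_R)\flap\psi$ needs a one-line density step. For example, approximate in $W^{1,p'}$ by compactly supported truncations, and note that the distributional $(-\triangle)^{\frac{1-s}{2}}$ is continuous from $L^{p'}$ into $\mathscr D'$. Second, the Mazur fallback in your last paragraph is unnecessary: norm convergence holds in every case, since the logarithmic weight at the critical exponent only weakens the lower-order condition.
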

\begin{proof}
    If $1\leq s<2$, H\"older's inequality implis $g\in\cL_s(\R^n)$, then lemma~\ref{commute}
    applies. We focus on the case $0<s<1$. Obviously $\mathrm{div}(\flap u)=\flap g$ holds in
    sense of $\mathscr S_0'(\R^n)$, hence $w:=\mathrm{div}(\flap u)-\flap g$ is a polynomial,
    with degree $d$. If $d\geq0$, let $w_d\neq0$ be its homogeneous part of degree $d$, $\psi\in C^\infty_{\mathrm{c}}(B_2)$
    and $\psi_R:=\psi(\cdot/R),R\geq 2$. Testing $w:=\mathrm{div}(\flap u)-\flap g$ against $\psi_R$,
    by homogeneity,
    \[R^{n+d}\int_{\R^n}w_d\psi+O(R^{n+d-1})=\int_{\R^n}u\,\flap\nabla\psi_R-\langle g,\,\flap\psi_R\rangle.\]
    Here $|\flap\nabla\psi(x)|\leq\rho(x)^{-n-s}$, hence
    \begin{equation*}
        \begin{aligned}
            &\left|\int_{\R^n}u\,\flap\nabla\psi_R\right|\lesssim R^{-1-s}\int_{\R^n}\frac{|u(x)|\,dx}{\rho(x/R)^{n+s}}\\
            \lesssim& R^{-1-s}R^{n+s}\int_{B_R}\frac{|u(x)|dx}{\rho(x)^{n+s}}+R^{-1-s}R^{n+s}\int_{\R^n\setminus B_R}\frac{|u(x)|dx}{|x|^{n+s}}\lesssim R^{n-1}.
        \end{aligned}
    \end{equation*}
    Meanwhile, $|\langle g,\,\flap\psi_R\rangle|\leq\|g\|_{\Lambda^{s-1,p}}[\flap\psi_R]_{\dot\Lambda^{1-s,p'}}=O(R^{n/p'-1})$. Thus, dividing $R^{n+s}$ on both side and sending $R\to\infty$, we derive $\int_{\R^n}w_d\psi=0$ for any $\psi\in C^\infty_{\mathrm{c}}(B_2)$, hence $w_d=0$, a contradiction. Therefore, $w=0$ and $\mathrm{div}(\flap u)=\flap g$ in $\mathscr S'(\R^n)$.
\end{proof}

\begin{remark}\label{g_condition_rem}
	Condition \eqref{g_condition} is the compatibility imposed by the
	pressure equation, and it is necessary for solvability in the class of
	Theorem~\ref{stokes_main}: if
	$(u,P)\in\Lambda^{s,p}(\R^n)^n\times W_0^{1,p}(\R^n)$ solves
	\eqref{fstokes}, then Lemma~\ref{commute'} gives
	$\flap g=\mathrm{div}(\flap u)=\mathrm{div}\,f-\triangle P$, and the
	right-hand side extends to an element of $W_0^{-1,p}(\R^n)$
	annihilating $\mathcal P_{[1-n/p']}$, since
	$\langle\mathrm{div}\,f,\phi\rangle=-\int f\cdot D\phi$ and
	$\langle\triangle P,\phi\rangle=-\int DP\cdot D\phi$ for
	$\phi\in W_0^{1,p'}(\R^n)$, and gradients annihilate constants. The
	orthogonality in \eqref{g_condition} is non-vacuous precisely when
    $p\le n/(n-1)$, in which case
	$\mathcal P_{[1-n/p']}=\mathcal P_0\subset W_0^{1,p'}(\R^n)$ and it
	reads $\langle\flap g,1\rangle=0$.
\end{remark}

\begin{proof}[Proof of Theorem~\ref{stokes_main}]
	The uniqueness is clear by Proposition~\ref{kernel}. Now we prove existence.
	The formula $\psi\mapsto-\int_{\R^n}f\cdot\nabla\psi\,dx$ is bounded on
	$W_0^{1,p'}(\R^n)$ by H\"older's inequality and extends
	$\mathrm{div}\big|_{C_{\mathrm{c}}^\infty}$; hence
	$\mathrm{div}\,f\in W_0^{-1,p}(\R^n)$, and since the formula vanishes on
	constants, $\mathrm{div}\,f\perp\mathcal P_{[1-n/p']}$ (recall
	$[1-n/p']\le0$, as $n\ge2$). Together with \eqref{g_condition} and Proposition~\ref{flap_bdd}
	for $1\leq s<2$ or Corollary~\ref{dual_scale_isom} for $0<s<1$,
	\[h:=\mathrm{div}\,f-\flap g\in W_0^{-1,p}(\R^n)\perp\mathcal P_{[1-n/p']}.\]
	By the isomorphism \cite[(1.7)]{AGG94}
	\[
	\lap:\;W_0^{1,p}(\R^n)/\mathcal P_{[1-n/p]}
	\longrightarrow W_0^{-1,p}(\R^n)\perp\mathcal P_{[1-n/p']},
	\]
	there exists $P\in W_0^{1,p}(\R^n)$ with $\lap P=h$ and
	\begin{equation}\label{P_est}
    	\|P\|_{W_0^{1,p}/\mathcal P_{[1-n/p]}}
		\lesssim\|f\|_{L^p}+\|\flap g\|_{W_0^{-1,p}};
	\end{equation}
	in particular $\nabla P\in L^p(\R^n)^n$ with
	$\|\nabla P\|_{L^p}\le\|P\|_{W_0^{1,p}/\mathcal P_{[1-n/p]}}$.

	By Theorem~\ref{main_isom} there exists $u\in\Lambda^{s,p}(\R^n)^n$ with
	$\flap u=f-\nabla P$ and
	\begin{equation}\label{u_est}
		\|u\|_{\Lambda^{s,p}/\mathcal P_{[s-n/p]}}
		\lesssim\|f-\nabla P\|_{L^p}
		\lesssim\|f\|_{L^p}+\|\flap g\|_{W_0^{-1,p}}.
	\end{equation}
		
	By construction $\lap P=\mathrm{div}\,f-\flap g$ and
	$\flap u=f-\nabla P$, i.e.\ $(u,P)$ solves the decoupled system
	\eqref{fstokes'}; moreover $u\in\Lambda^{s,p}(\R^n)^n,~P\in W_0^{1,p}(\R^n)\subset\mathscr S'(\R^n)$, and
	$g\in \Lambda^{s-1,p}(\R^n)$. Set $w:=\mathrm{div}\,u-g$. Let $\phi\in\mathscr S_0(\R^n)$ and
	$\psi:=(\flap)^{-1}\phi\in\mathscr S_0(\R^n)$, so that $\flap\psi=\phi$, then
	$$\langle w,\phi\rangle=\langle u,\nabla\flap\psi\rangle-\langle g,\flap\psi\rangle
    =\langle f-\nabla P,\nabla\psi\rangle-\langle\lap P-\mathrm{div}\,f,\psi\rangle=0.$$
	Hence $w$ is a polynomial. 
		
	Suppose $w\neq 0$ and $d=\mathrm{deg}\,w$. 
	We claim $d\leq 0$. Let $w_d\ne0$ be its homogeneous part of degree $d$, $w_r=w-w_d$,
	$\psi\in C_{\mathrm{c}}^\infty(B_2)$ and $\psi_R:=\psi(\cdot/R)$, $R\ge2$.
	Testing $\mathrm{div}\,u=g+w$ against $\psi_R$:
	\begin{equation}
	   -\frac{1}{R}\int_{\R^n}u\cdot(\nabla\psi)(x/R)\,dx
		=\int_{\R^n}g\,\psi_R\,dx+\int_{\R^n}w_d\,\psi_R,dx+\int_{\R^n}w_r\,\psi_R\,dx.
	\end{equation}
	Since $\norm{u\rho^{-s}\lg^{-1}}_{L^p}\lesssim\norm{u}_{\Lambda^{s,p}}$ in
	every case of Definition~\ref{fS}, we have
	$\|u\|_{L^p(B_{2R})}\lesssim_u R^{\,s}\ln R$, so the left-hand side 
	is $O(R^{\,s-1+n/p'}\ln R)$ by H\"older's inequality.
	We estimate the first term in the right-hand side as follows:
	\begin{equation}\label{g_est}
		\left|\int_{\R^n}g\,\psi_R\,dx\right|\leq
		\begin{cases}
			\|g\rho^{1-s}\lg^{-1}\|_{L^p}\|\phi_R\rho^{s-1}\lg\|_{L^{p'}}=O(R^{\,s-1+n/p'}\ln R),~1<s<2;\\
			\|g\|_{\Lambda^{s-1,p}}[\psi_R]_{\Lambda^{1-s,p'}}=O(R^{\,s-1+n/p'}),~0<s\leq 1.
		\end{cases}
	\end{equation}
	By homogeneity we derive
	\[O(R^{\,s-1+n/p'})=R^{n+d}\int_{\R^n}w_d\psi+O(R^{\,n+d-1}),~\int_{\R^n}w_d\psi=O(R^{-1}+R^{\,s-1-n/p-d}).\]
	If $d>s-1-\dfrac{n}{p}$, it follows that $\langle w_d,\psi\rangle=0$
	for all $\psi$, thus $w_d=0$, a contradiction. Therefore $d<s-1-\dfrac{n}{p}<1$.
		
	If $(s-1)p<n$, we have $d<0$, then $\mathrm{div}\,u=g$. If $(s-1)p\ge n$, $d=0$ is
	possible, in which case we wirte $w(x)=c_0$. Then $[s-n/p]=1$ and
	\[\lambda_0:=\frac{c_0}{n}\,x\in(\mathcal P_1)^n\subset\Lambda^{s,p}(\R^n)^n\]
	satisfies $\flap\lambda_0=0$,
	$\mathrm{div}\,\lambda_0=c_0$; replacing $u$ by $u-\lambda_0$, which
	affects neither $\flap u$ nor \eqref{u_est}, yields $\mathrm{div}\,u=g$.
	In either case $(u,P)$ solves \eqref{fstokes}.
	
	It remains to check \eqref{stokes_est}.
	Choose $q\in(\mathcal P_{[s-n/p]})^n$ and $c\in\mathcal P_{[1-n/p]}$
	attaining the quotient norms in \eqref{u_est} and \eqref{P_est}(the infimum over the finite-dimensional space $(\mathcal P_{[s-n/p]})^n$
	is attained, the map $q'\mapsto\|u-q'\|_{\Lambda^{s,p}}$ being continuous
	and coercive). If
	$[s-n/p]\le0$, then $q$ is constant, $\mathrm{div}\,q=0$, so
	$(q,c)\in\mathcal N_{s,p}$ and \eqref{stokes_est} follows. If $[s-n/p]=1$, write $q=a+Bx$ and set
	$\tilde q:=q-\frac{\operatorname{tr}B}{n}\,x$, so that $\mathrm{div}\,\tilde q=0$ and
    $(\tilde q,c)\in\mathcal N_{s,p}$. Fixing
	$\psi\in C_{\mathrm{c}}^\infty(B_2)$ with $\int\psi=1$ and testing
	$\mathrm{div}(u-q)=g-\operatorname{tr}B$ against $\psi$,
	\begin{equation*}
	    \begin{aligned}
	        &|\operatorname{tr}B|
		=\Abs{\int_{\R^n}g\,\psi\,dx+\int_{\R^n}(u-q)\cdot\nabla\psi\,dx}\\
		\lesssim&\|g\|_{\Lambda^{s-1,p}}+\|u-q\|_{L^p(B_2)}
		\lesssim\|g\|_{\Lambda^{s-1,p}}+\|u-q\|_{\Lambda^{s,p}},
	    \end{aligned}
	\end{equation*}
	whence, with $\kappa:=\norm{x}_{\Lambda^{s,p}}$, recalling by Proposition~\ref{flap_bdd}
	and Corollary~\ref{dual_scale_isom}
	\[
		\|u-\tilde q\|_{\Lambda^{s,p}}
		\le\|u-q\|_{\Lambda^{s,p}}+\tfrac{\kappa}{n}\,|\operatorname{tr}B|
		\lesssim\|f\|_{L^p}+\|g\|_{\Lambda^{s-1,p}}.
	\]
	Together with \eqref{P_est}, this proves \eqref{stokes_est}.
\end{proof}

\subsection{Solution of the fractional Stokes equation with $L^p$ pressure}

In this subsection we solve \eqref{fstokes} with the pressure in
$L^p(\R^n)$ and the velocity at the order $s-1$, for the full range
$0<s<2$. We keep the conventions $\Lambda^{0,p}(\R^n)=L^p(\R^n)$ and
$\mathcal P_k=\{0\}$ for $k<0$. To avoid polynomial errors caused by system \eqref{fstokes'},
we take 0-order transforms, the normalized Riesz transforms, to system \eqref{fstokes}:
\begin{equation}\label{riesz_convention}
  \widehat{R_jh}\,(\xi)=\frac{i\xi_j}{|\xi|}\,\widehat h(\xi),
  \qquad R:=(R_1,\dots,R_n),
\end{equation}
so that $D_j=R_j(-\triangle)^{\frac12}$ on $\mathscr S(\R^n)$ and
$\sum_{j=1}^nR_j^2=-\mathrm{Id}$.

The velocity is sought in
\[
  V_{s,p}=\Lambda^{s-1,p}(\R^n)\perp\mathcal P_{[1-s-n/p']},
\]
the orthogonality constraint being nontrivial only for $0<s<1$; for
$s\ge1$ the space $\mathcal P_{[1-s-n/p']}$ is trivial and
$V_{s,p}=\Lambda^{s-1,p}(\R^n)$. To every $u\in V_{s,p}$ we attach its
\emph{$L^p$-realization} $v\in L^p(\R^n)$: for $1<s<2$,
$v:=(-\triangle)^{\frac{s-1}{2}}u$ is the top-order component of
Definition~\ref{fS}; for $s=1$, $v:=u$; for $0<s<1$, $v$ is the unique
solution of $(-\triangle)^{\frac{1-s}{2}}v=u$ furnished by
Theorem~\ref{dual_isom} at the order $1-s$, whose target is precisely
$V_{s,p}$. By Theorem~\ref{main_isom} at the order $s-1$ when $1<s<2$,
trivially when $s=1$, and by Theorem~\ref{dual_isom} when $0<s<1$, the
map $u\mapsto v$ induces an isomorphism
\begin{equation}\label{realization_iso}
  V_{s,p}\big/\mathcal P_{[s-1-n/p]}
  \;\xrightarrow\;L^p(\R^n).
\end{equation}
The equations \eqref{fstokes} are understood through the identities
\begin{equation}\label{stokes_identities}
  \flap u=(-\triangle)^{\frac12}v,
  \qquad
  \nabla P=(-\triangle)^{\frac12}\big[RP\big],
  \qquad
  \mathrm{div}\,u=(-\triangle)^{\frac{2-s}{2}}
  \Big[\sum_{j=1}^nR_jv_j\Big],
\end{equation}
valid for all $u\in V_{s,p}^n$ with realization $v$ and all
$P\in L^p(\R^n)$: the brackets lie in $L^p(\R^n)$, the outer operators
are those of Theorem~\ref{dual_isom} at the orders $1$ and $2-s$, and
consequently the first equation of \eqref{fstokes} holds in
$\Lambda^{-1,p}(\R^n)^n$ and the second in $\Lambda^{s-2,p}(\R^n)$.
These identities are consistent with the operators defined earlier: the
first is the factorization of Proposition~\ref{flap_bdd} for $1<s<2$,
and the dual factorization \eqref{dual_factorization} (with $\sigma=1$,
$t=s$) for $0<s<1$; the second and third hold on $\mathscr S(\R^n)$ as
Fourier multiplier identities under \eqref{riesz_convention} and extend
by continuity and density---in $P\in L^p(\R^n)$ for the second and, for
the third, in $v\in L^p(\R^n)$ when $0<s\le1$ and in
$u\in\Lambda^{s-1,p}(\R^n)$ (Theorem~\ref{Y_equals_Lambda}) when
$1<s<2$---both sides being continuous by Proposition~\ref{D1_bdd} and
Theorem~\ref{dual_isom}.

\begin{theorem}\label{stokes_Lp_pressure}
Let $n\ge2$, $0<s<2$, $p\in(1,\infty)$, and let
$f\in\Lambda^{-1,p}(\R^n)^n$, $g\in\Lambda^{s-2,p}(\R^n)$. Then
\eqref{fstokes} admits a solution $(u,P)\in V_{s,p}^n\times L^p(\R^n)$ if
and only if
\begin{equation}\label{Lp_pressure_orth}
  f\perp\mathcal P_{[1-n/p']}
  \qquad\text{and}\qquad
  g\perp\mathcal P_{[2-s-n/p']}.
\end{equation}
In this case the pressure $P$ is unique, and the solutions are precisely
the pairs $(u+\lambda,P)$ with
$\lambda\in(\mathcal P_{[s-1-n/p]})^n$; thus the solution is unique
whenever $s-1<n/p$, in particular for all $0<s\le1$, and unique up to an
additive constant vector when $s-1\ge n/p$. Moreover every solution
satisfies
\begin{equation}\label{Lp_pressure_est}
  \norm{P}_{L^p}
  +\inf_{\lambda\in(\mathcal P_{[s-1-n/p]})^n}
   \norm{u-\lambda}_{\Lambda^{s-1,p}}
  \le C\big(\norm{f}_{\Lambda^{-1,p}}+\norm{g}_{\Lambda^{s-2,p}}\big).
\end{equation}
\end{theorem}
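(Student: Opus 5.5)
The plan is to transfer the whole problem to $L^p$ unknowns through the identities \eqref{stokes_identities}. Then the Stokes system becomes a system of zeroth-order (Riesz-transform) equations, which can be solved algebraically. Write $u\in V_{s,p}^n$ through its realization $v\in L^p(\R^n)^n$, using the isomorphism \eqref{realization_iso}. By Theorem~\ref{dual_isom} at order $1$, the condition $f\perp\mathcal P_{[1-n/p']}$ is equivalent to $f=(-\triangle)^{\frac12}F$ for a unique $F\in L^p(\R^n)^n$ with $\|F\|_{L^p}\lesssim\|f\|_{\Lambda^{-1,p}}$. By Theorem~\ref{dual_isom} at order $2-s$, the condition $g\perp\mathcal P_{[2-s-n/p']}$ is equivalent to $g=(-\triangle)^{\frac{2-s}{2}}G$ for a unique $G\in L^p(\R^n)$. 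Necessity of \eqref{Lp_pressure_orth} is then immediate: by \eqref{stokes_identities}, any solution gives $f=(-\triangle)^{\frac12}(v+RP)$ and $g=(-\triangle)^{\frac{2-s}{2}}(\sum_jR_jv_j)$ with brackets in $L^p$, and the remark after Theorem~\ref{dual_isom} shows that such images annihilate the relevant polynomials.

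For sufficiency, injectivity of the outer operators in Theorem~\ref{dual_isom} (they are isomorphisms from $L^p$) reduces \eqref{fstokes} to the $L^p$ system
\[
  v+RP=F,\qquad \sum_{j=1}^nR_jv_j=G .
\]
Apply $\sum_jR_j$ to the first equation and use $\sum_jR_j^2=-\mathrm{Id}$. This gives $G-P=\sum_jR_jF_j$, hence
\[
  P=G-\sum_jR_jF_j,\qquad v=F-RP .
\]
Conversely, this pair solves the system: $\sum_jR_jv_j=\sum_jR_jF_j+P=G$. Since the Riesz transforms are bounded on $L^p$, we get $\|P\|_{L^p}+\|v\|_{L^p}\lesssim\|F\|_{L^p}+\|G\|_{L^p}$. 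Then \eqref{realization_iso} yields $u\in V_{s,p}^n$, unique modulo $(\mathcal P_{[s-1-n/p]})^n$, with $\inf_\lambda\|u-\lambda\|_{\Lambda^{s-1,p}}\lesssim\|v\|_{L^p}$. Together with the dual-isomorphism bounds on $F$ and $G$, this gives \eqref{Lp_pressure_est}.

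Uniqueness runs the same computation backwards. If $(u,P)$ solves the homogeneous problem, then $v+RP=0$ and $\sum R_jv_j=0$ in $L^p$, so $P=0$ and $v=0$. Hence $u$ lies in the kernel of the realization map, namely $(\mathcal P_{[s-1-n/p]})^n$. Conversely, adding such $\lambda$ leaves $v$ unchanged, so $(u+\lambda,P)$ is again a solution. Note that for $0<s\le1$ this kernel is trivial because $s-1-n/p<0$, and for $s-1\ge n/p$ it is the constants, since $s-1<1$; this matches the statement.

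The main obstacle is justifying that \eqref{fstokes}, interpreted via \eqref{stokes_identities}, is genuinely equivalent to the $L^p$ system. Two points need care. First, the outer operators $(-\triangle)^{\frac12}$ and $(-\triangle)^{\frac{2-s}{2}}$ of Theorem~\ref{dual_isom} must be injective on $L^p$; this follows from the isomorphism statement. Second, the identities \eqref{stokes_identities} must hold for all $u\in V_{s,p}$ and $P\in L^p$, not just for Schwartz functions. I would take the density and continuity justification given just before the theorem as the input here, and check only that the realization $v$ is compatible with $\mathrm{div}\,u$ in each of the three regimes $0<s<1$, $s=1$ and $1<s<2$.
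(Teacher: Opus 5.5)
Your proposal is correct and follows the paper's proof essentially step for step. Your $F,G$ are the paper's $w,Q$, and your formulas $P=G-\sum_jR_jF_j$, $v=F-RP$ coincide with the paper's. Necessity and uniqueness are argued the same way: via the annihilation remark after Theorem~\ref{dual_isom}, injectivity of the outer operators there, and $\sum_jR_j^2=-\mathrm{Id}$.
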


\begin{proof}
\emph{Necessity.} Let $(u,P)$ be a solution with realization $v$. By
\eqref{stokes_identities}, each of $\flap u$, $\nabla P$,
$\mathrm{div}\,u$ has the form $(-\triangle)^{\frac{\tau}{2}}h$ with
$h\in L^p(\R^n)$ and $\tau\in\{1,\,2-s\}$, and such distributions
annihilate $\mathcal P_{[\tau-n/p']}$: by the defining formula in
Theorem~\ref{dual_isom},
$\langle(-\triangle)^{\frac{\tau}{2}}h,\,q\rangle
=\int_{\R^n}h\,(-\triangle)^{\frac{\tau}{2}}q\,dx=0$, since every
$q\in\mathcal P_{[\tau-n/p']}$ is at most affine---affine only when
$\tau=2-s>1$---and hence $\tau$-harmonic (cf.\ the proof of
Proposition~\ref{kernel}). Therefore
$f=\flap u+\nabla P\perp\mathcal P_{[1-n/p']}$ and
$g=\mathrm{div}\,u\perp\mathcal P_{[2-s-n/p']}$.

\emph{Existence.} Assume \eqref{Lp_pressure_orth}. By
Theorem~\ref{dual_isom} at the orders $1$ and $2-s$, there exist unique
$w\in L^p(\R^n)^n$ and $Q\in L^p(\R^n)$ with
\[
  (-\triangle)^{\frac12}w=f,\qquad
  (-\triangle)^{\frac{2-s}{2}}Q=g,\qquad
  \norm{w}_{L^p}+\norm{Q}_{L^p}
  \lesssim\norm{f}_{\Lambda^{-1,p}}+\norm{g}_{\Lambda^{s-2,p}}.
\]
Set
\[
  P:=Q-\sum_{i=1}^nR_iw_i\in L^p(\R^n),
  \qquad
  v:=w-RP\in L^p(\R^n)^n,
\]
so that $\norm{P}_{L^p}+\norm{v}_{L^p}\lesssim
\norm{w}_{L^p}+\norm{Q}_{L^p}$, and, by
$\sum_iR_i^2=-\mathrm{Id}$,
\[
  \sum_jR_jv_j
  =\sum_jR_jw_j-\sum_jR_j^2P
  =\sum_jR_jw_j+P=Q.
\]
By \eqref{realization_iso} there exists $u\in V_{s,p}^n$ with realization
$v$ and
\[
  \inf_{\lambda\in(\mathcal P_{[s-1-n/p]})^n}
  \norm{u-\lambda}_{\Lambda^{s-1,p}}
  \lesssim\norm{v}_{L^p}
  \lesssim\norm{f}_{\Lambda^{-1,p}}+\norm{g}_{\Lambda^{s-2,p}}.
\]
Then \eqref{stokes_identities} gives
\[
  \flap u+\nabla P
  =(-\triangle)^{\frac12}(v+RP)
  =(-\triangle)^{\frac12}w=f,
\]
\[
  \mathrm{div}\,u
  =(-\triangle)^{\frac{2-s}{2}}\sum_jR_jv_j
  =(-\triangle)^{\frac{2-s}{2}}Q=g,
\]
so $(u,P)$ solves \eqref{fstokes} and satisfies
\eqref{Lp_pressure_est}.

\emph{Uniqueness.} Let $(u,P)$ solve \eqref{fstokes} with $(f,g)=(0,0)$,
and let $v$ be the realization of $u$. Setting $w_0:=v+RP$ and
$Q_0:=\sum_jR_jv_j$, both in $L^p(\R^n)$, identities
\eqref{stokes_identities} give $(-\triangle)^{\frac12}w_0=0$ and
$(-\triangle)^{\frac{2-s}{2}}Q_0=0$, so the injectivity in
Theorem~\ref{dual_isom} yields $w_0=0$ and $Q_0=0$. Then
\[
  0=Q_0=\sum_jR_j(w_{0,j}-R_jP)
  =\sum_jR_jw_{0,j}+P=P,
\]
whence $P=0$ and $v=w_0-RP=0$; by the injectivity in
\eqref{realization_iso}, $u\in(\mathcal P_{[s-1-n/p]})^n$. Conversely,
when $s-1\ge n/p$ (which forces $1<s<2$), every constant vector
$\lambda$ belongs to $V_{s,p}^n$ (Lemma~\ref{poly_in_Y}) with realization
$0$, so \eqref{stokes_identities} gives $\flap\lambda=0$ and
$\mathrm{div}\,\lambda=0$, and $(u+\lambda,P)$ is again a solution.
Finally, since $P$ is unique and the velocities form exactly the coset
$u+(\mathcal P_{[s-1-n/p]})^n$, the left-hand side of
\eqref{Lp_pressure_est} is an invariant of the solution set, and the
bound proved for the constructed solution holds for every solution.
\end{proof}

{\bf{Acknowledgements.}} 
 
 Li, Ouyang and Wang is partially supported by National Natural Science
Foundation of China (NSFC Grant No. W2531006) and the Institute of Modern Analysis-A Frontier Research Center of Shanghai.
 \medskip

{\bf{Date availability statement:}} Data will be made available on reasonable request.
\medskip

{\bf{Conflict of interest statement:}} There is no conflict of interest.

\end{document}